\documentclass[11pt,a4paper]{amsart}
\usepackage[svgnames,dvipsnames]{xcolor}
\usepackage{amssymb,amsmath,amsthm,enumerate}
\usepackage{hyperref}
\usepackage[left=2.5cm,right=2.5cm,top=2.5cm,bottom=3cm]{geometry}
\usepackage{caption}
\captionsetup{belowskip=0pt}
\usepackage[scr=rsfso]{mathalfa}
\usepackage{upgreek}
\usepackage{nicefrac}
\usepackage{graphicx}
\usepackage{float}
\usepackage{tikz}

\newcommand\myshade{90}
\colorlet{mylinkcolor}{violet}
\colorlet{mycitecolor}{YellowOrange}
\colorlet{myurlcolor}{Aquamarine}

\hypersetup{
  linkcolor  = mylinkcolor!\myshade!black,
  citecolor  = mycitecolor!\myshade!black,
  urlcolor   = myurlcolor!\myshade!black,
  colorlinks = true,
}

\usepackage{dsfont}

\usepackage{scalerel,stackengine}
\stackMath
\newcommand\reallywidehat[1]{%
	\savestack{\tmpbox}{\stretchto{%
			\scaleto{%
				\scalerel*[\widthof{\ensuremath{#1}}]{\kern-.6pt\bigwedge\kern-.6pt}%
				{\rule[-\textheight/2]{1ex}{\textheight}}%WIDTH-LIMITED BIG WEDGE
			}{\textheight}% 
		}{0.5ex}}%
	\stackon[1pt]{#1}{\tmpbox}%
}

\newcommand{\bR}{\mathbb{R}^3}
\newcommand{\bRp}{\mathbb{R}_+}
\newcommand{\bRr}{\mathbb{R}}
\newcommand{\bRfp}{\mathbb{R}^3 \times \bRp}
\newcommand{\bRfr}{\mathbb{R}^3 \times \bRr}
\newcommand{\bS}{\mathbb{S}^2}
\newcommand{\bSp}{\mathbb{S}^2_+}
\newcommand{\mS}{\mathcal{S}}
\newcommand{\md}{\mathrm{d}}
\newcommand{\dpo}{d_\alpha(r,R)}
\newcommand{\dub}{d_\alpha(r,R) }

\newcommand{\g}{\gamma}

\newcommand{\la}{\langle}
\newcommand{\ra}{\rangle}

\newcommand{\Ls}{L^1}

 \newcommand{\D}{\mathcal{D}}
  \newcommand{\Dt}{\tilde{\mathcal{D}}}

\newcommand{\Fou}{\mathcal{F}}

\newcommand{\E}{\left(\frac{E}{m}\right)}

\newcommand{\cb}{\mathcal{C}}
\newcommand{\mA}{\mathcal{A}}
\newcommand{\mAg}{\mathcal{A}}

\newcommand{\K}{K_1}

\newcommand{\A}{A}

\newcommand{\hag}{h_{\alpha,\g}}
\newcommand{\kk}{\beta}
\newcommand{\test}{I^\delta}
\newcommand{\testp}{I'^\delta}

%order of parameters
\newcommand{\param}{ \sigma, r, R}
\newcommand{\params}{ - \sigma, 1 - r, R}
\newcommand{\paramp}{  \sigma', r', R'}

%domain for parameters
\newcommand{\domparam}{\bSp \times [0,1]^2}
\newcommand{\domparamgen}{\bS \times [0,1]^2}

\newcommand{\iall}{\iiint\limits_{\substack{(v,I)\in \bRfp \\ (v_*,I_*)\in \bRfp \\ (\param) \in \domparam}}}
\newcommand{\iallintro}{\int_{(\bRfp)^2}\int_{\domparam}}

\newcommand{\istarintro}{\int_{\bRfp}\int_{\domparam }}

\newcommand{\istargenintro}{\int_{ \bRfp} \int_{ \domparamgen }}

\newcommand{\iRt}{\iiint\limits_{\substack{(v,I)\in \bRfp \\ (v_*,I_*)\in \bRfp \\ (r,\tilde{R},\sigma) \in [0,1]\times\bRfp\times\bSp }}}

\newcommand{\iI}{\iint\limits_{\substack{(I,I_*)\in \bRp\!\times\bRp \\ r\in[0,1]}}}
\newcommand{\ien}{\int_{\substack{(I,I_*)\in (\bRp)^2 }}}
\newcommand{\iv}{\iint\limits_{\substack{(v,v_*)\in \bR\times\bR \\ (\tilde{R},\sigma) \in  [0,\infty)\times\bSp}}}
\newcommand{\iRts}{\int\limits_{  (\tilde{R},\sigma) \in  \bRfp\times\bSp}}

\newcommand{\dall}{ \md \sigma \, \md R \, \md r \, \md v_* \md I_* \, \md v \, \md I}
\newcommand{\dstar}{ \md \sigma \, \md R \, \md r \, \md v_* \md I_*}

\newcommand{\dstartilde}{ \md \tilde{R} \, \md(\cos \tilde{\theta}) \md \omega \, \md r \, \md v'_* \md I'_*}

\newcommand{\dRt}{ \md \sigma \, \md \tilde{R} \, \md r \, \md v_* \md I_* \, \md v \, \md I}
\newcommand{\dI}{   \md r \,   \md I_*  \, \md I}
\newcommand{\dv}{ \md \sigma \, \md \tilde{R} \,   \md v_*  \md v}
\newcommand{\dvvs}{  \md v_* \md I_* \, \md v \, \md I}

\newcommand{\dRts}{ \md \sigma \, \md \tilde{R}  }

\newcommand{\V}{\frac{v+v_*}{2}}

\newcommand{\Et}{\tilde{E}}

\newcommand{\R}{\mathcal{R}}
\newcommand{\RI}{\mathcal{R}}
\newcommand{\Prim}{\mathcal{P}}

\DeclareMathOperator*{\esssup}{ess \, sup}

\newcommand{\cI}{\tilde{c}_1}
\newcommand{\cIIs}{\tilde{c}_2}

 \newcommand{\fR}{f_R}
  \newcommand{\fS}{f_S}

  % Smooth constants
  \newcommand{\Csvel}{C_{\text{vel}}^{\text{sm}}}
  \newcommand{\Csen}{C_{\text{en}}^{\text{sm}}}
  
  % Regular constants
  \newcommand{\Crgvel}{C_{\text{vel}}^{\text{reg}}}
  \newcommand{\Crgen}{C_{\text{en}}^{\text{reg}}}
  
  % Constants at general time t_0
  \newcommand{\CveltS}{C_{\text{vel};t_0}^{S}}
  \newcommand{\CenttS}{C_{\text{en};t_0}^{S}}
  
  % Constants at initial time t_0 = 0
  \newcommand{\CvelzS}{C_{\text{vel};0}^{S}}
  \newcommand{\CenzS}{C_{\text{en};0}^{S}}
  
  % Calligraphic constants (short form)
  \newcommand{\Clvel}{\mathcal{C}_{\text{vel}}}
  \newcommand{\Clen}{\mathcal{C}_{\text{en}}}
  
    \newcommand{\Cpr}{C^{\text{pr}}}
    
      \newcommand{\Ccmvel}{C_{\text{vel}}^{\text{c}}}
 
\allowdisplaybreaks

\newtheorem{theorem}{Theorem}

\newtheorem{lemma}[theorem]{Lemma}
\newtheorem{proposition}[theorem]{Proposition}

\newtheorem{remark}{Remark}

\begin{document}
	%%%%%%%%%%%%%%%%%%%%%%%%%%%%%%%%%%%%%%%%%%%%%%%%%%%%%%%%%%%%%%%%%%%%%

\bibliographystyle{abbrv}

	\title[Regularity theory]{Regularity Theory for the Space Homogeneous Polyatomic Boltzmann Flow}
	
		\author[R. Alonso]{Ricardo Alonso}
	\address{Texas A\&M, Division of arts \& sciences, Education City, Doha, Qatar }
	\email{ricardo.alonso@qatar.tamu.edu}

	\author[M. \v Coli\'{c}]{Milana \v Coli\'{c}}
		\address{Institute of Mathematics and Scientific Computing, 
		University of Graz, 
		Heinrichstr. 36, 
		8010 Graz, Austria}
	\address{Department of Mathematics and Informatics, 
		Faculty of Sciences, University of Novi Sad, 
		Trg Dositeja Obradovi\'ca 4, 21000 Novi Sad, Serbia}
	\email{milana.colic@uni-graz.at}

\begin{abstract}
	In this paper, we study the polyatomic Boltzmann equation based on continuous internal energy, focusing on physically relevant collision kernels of the hard potentials type with integrable angular part. We establish three main results: smoothing effects of the gain collision operator, propagation of velocity and internal energy first-order derivatives of solutions, and exponential decay estimates for singularities of the initial data. These results ultimately lead to a decomposition theorem, showing that any solution splits into a smooth part and a rapidly decaying rough component.  
	\end{abstract}

\maketitle

\medskip

%\textbf{Keywords.} Polyatomic Boltzmann operator; Entropy-based estimates; Regularity propagation.\\

%\textbf{AMS subject classifications.}  35Q20, 76P05, 82C40
	
%\tableofcontents	
	
\section{Introduction}

The Boltzmann equation is known as  a robust framework for describing non-equilibrium processes in gas flows \cite{torrilhon2016modeling,Rugg-book-2,Kusto-book,Str,Gio}. In applications, it plays a central role  particularly in scenarios where classical fluid dynamics models, like the Navier-Stokes-Fourier laws, become inadequate. Examples include rarefied regime (vacuum technology, high altitude flights) and microscopic setting (Micro- and Nano-Electro-Mechanical-Systems).

Mathematical theory of the Boltzmann equation has been an active research field, resulting in a broad understanding of the analytical and numerical properties of its solution \cite{V}. However, most results are restricted to the physical setting of a single monatomic gas described by a single equation with a simple mechanism of particle interactions possessing symmetries that facilitate analysis. For application purposes, there is a clear need for studying more intrigant gases.

This paper develops a regularity theory for the space-homogeneous Boltzmann equation describing a single polyatomic gas, when the gas particles are composed of several atoms. Moments and higher integrability properties, which provide quantitative bounds on the norms of the solutions in Lebesgue spaces, have already been obtained \cite{MPC-IG-poly, MC-Alonso-Gamba, MC-Alonso-Lp, MC-Alonso-Pesaro}. Building upon these results, regularity emerges as a natural continuation, as was already known for monatomic gases in \cite{L,BD,MV}  that uses previously established $L^p$ bounds with $p=2$. We now pursue this direction in the context of polyatomic gases. The resulting theory provides a stepping stone toward deeper understanding of the equation, particularly regarding convergence towards equilibrium, and forms the basis for the analysis of stability and convergence of numerical methods \cite{AGT-S}.

We now briefly introduce the Boltzmann equation for polyatomic gases, with the relevant functional spaces. Along with the presentation of our main results in Section \ref{Sec: main}, we review the known results in the context of regularity theory for the  classical case of a single monatomic gas in the space homogeneous setting.

\subsection{Continuous approach to modelling a polyatomic gas and the Boltzmann equ\-ation} A polyatomic molecule is defined as a molecule composed of   two or more atoms. Its internal  structure causes rotational and vibrational degrees of freedom, besides the translational ones \cite{Rugg-book-2}. Continuous approach makes a modelling choice to capture all the energies, apart from the kinetic one, involved in the various motions of a polyatomic molecule by one continuous internal  energy variable $I \in \mathbb{R}_+$ \cite{LD-Bourgat,LD-Toulouse,DesMonSalv}. %, so that the total energy of a molecule of mass $m$ possessing velocity $v \in \mathbb{R}^3$ is kinetic plus internal, that is $\frac{m}{2} |v|^2 + I$.

In the space homogeneous setting, distribution function $f: = f(t,v,I)\geq 0$   depends on time $t\geq 0$ and molecular velocity-internal energy pair $(v,I) \in \bR\times \bRp$. Its evolution is governed by the Boltzmann equation 
\begin{equation}\label{BE}
	\begin{split}
		\partial_t f(t,v,I) &= Q(f(t,\cdot,\cdot), f(t,\cdot,\cdot))(v,I)%\\
%		&= Q^+(f(t,\cdot,\cdot), f(t,\cdot,\cdot))(v,I) - f(v,I) \	\nu[g](v,I),  \\
	\end{split}
\end{equation}
where   the  collision operator  $Q(f, f)(v,I)$  encodes the effect of particles' collisions on the (tempo\-ral) change of the unknown $f$.  It  is a non-local integral operator that acts only on the  microscopic variables of $f$, in the present case $(v,I)$. 

The functional properties of the collision operator,  and, consequently, the solution to the corresponding Boltzmann equation, are strongly influenced by the nature of particle interactions or the assumptions regarding the collision kernel.  Grounded on  the applications, it is common to consider a factorized form of the collision kernel, where the angular part is separated from the kinetic part. Taking this idea, the present paper focuses on the cut-off setting requiring the angular part to be integrable, and implying that the collision operator $Q(f, f)(v,I)$  can be  represented  as the \emph{gain} minus the \emph{loss}, with the latter  being  local in the unknown $f$ and proportional to the collision frequency $	\nu[f]$,
\begin{equation}\label{BE split}
	\begin{split}
		Q(f,f)(v,I) = Q^+(f, f)(v,I) - f(v,I) \	\nu[f](v,I).
	\end{split}
\end{equation}
The kinetic part is assumed to be  a positive power $  \g \in (0,2]$    of the colliding molecule pair total  (kinetic plus internal) energy, an assumption known as hard potentials. Is is worthwhile to emphasize that the collision operator in the polyatomic setting depends on the structure of the polyatomic molecule or its degrees of freedom, manifested through a parameter $\alpha>-1$ (to  appear in the definition of the collision operator \eqref{coll operator}).  

Parameters of the Boltzmann equation \eqref{BE}--\eqref{BE split} can be linked to experimental data through moment methods used to model the transport properties of the gas. Evaluations of the collision operator \eqref{BE split}, described in \cite{MPC-Dj-S,MPC-Dj-T,MPC-Dj-T-O,MPC-SS-non-poly}, show that the polyatomic parameter $\alpha>-1$ corresponds to the polytropic specific heat of the gas, while the parameter $\g\in (0,2]$ is related to the shear viscosity exponent. Moreover, incorporating frozen collisions   \cite{MC-Alonso-frozen} allows matching the Prandtl number and, under certain conditions, the bulk viscosity. These properties validate the physical consistency and practical relevance of the polyatomic Boltzmann model \eqref{BE}--\eqref{BE split}.

Besides being successfully studied in the space homogeneous setting, results that we will review in Section~\ref{Sec: Prelim}, the mathematical analysis of the Boltzmann equation with continuous internal energy \eqref{BE} has recently attracted increasing attention. For instance, in the perturbation framework for the space dependent problem, compactness properties of the associated linearized Boltzmann operator have been studied in \cite{Bern, Brull, Bern-nous}. The global well-posedness of bounded mild solutions near global equilibrium on the torus has been established in \cite{Duan-Li}, while \cite{Ko-Son} constructs large-amplitude solutions with small initial relative entropy and proves their convergence toward the global equilibrium state at an exponential rate.

We now briefly introduce the functional spaces used in the subsequent analysis.

\subsection{Functional spaces}
We start by defining the Lebesgue brackets as in \cite{MPC-IG-poly},
\begin{equation}\label{brackets}
	\la v, I \ra = \sqrt{1 + \frac{1}{2}  |v|^2  + \frac{1}{ m}  I  }, \quad \text{for} \ \ m >0, \ v\in\bR, \ \text{and} \ \ I \in \mathbb{R}_+ = [0,\infty).
\end{equation}
Weighted  Lebesgue spaces  $L^p_{ k}(\bR\times\bRp)$,    with the weight  \eqref{brackets} of the order $k\geq 0$,  are   defined by the norm, for  $1\leq p<\infty$,
 \begin{equation}\label{Lp i}
 	 \| f \|_{L^p_{ k}(\bR\times\bRp)}
 	  =   \| f \la \cdot \ra^{k} \|_{L^p(\bR\times\bRp)} =   \left(  \int_{\bRfp} \left(  |f(v,I)| \la v, I \ra^{k} \right)^p \md v\, \md I  \right)^{1/p},
 \end{equation}
 while for $p=\infty$ the following definition is used
 \begin{equation}\label{L inf i}
  \| f \|_{L^\infty_{ k}(\bR\times\bRp)}  = \esssup\limits_{(v,I) \in \bRfp} \left(  |f(v,I)| \la v, I \ra^{k} \right).
 \end{equation}
When $k=0$, we omit  writing an index, i.e. $L^p_0\equiv L^p$. 

Finally, the first-order homogeneous Sobolev space in the velocity variable $	\dot{H}^{1}_{v}(\bRfp)$ is defined by the norm 
\begin{equation}\label{H1v}
 \| f \|_{\dot{H}^{1}_{ v}}^2   =  	\sum_{i=1}^{3}   \| f \|_{\dot{H}^{1}_{v_i}}^2,  \quad \text{where} \      \| f \|_{\dot{H}^{1}_{v_i}}  := \| \partial_{v_i} f \|_{{L}^{2}}, \ i=1,2,3.
\end{equation}
Let us stress that for a time dependent function $f(t,v,I)$, the norms \eqref{Lp i}, \eqref{L inf i} and \eqref{H1v} are taken at each particular time $t$, for example $\|f\|_{L^p} = \| f(t,\cdot)\|_{L^p}$.

When refereeing to the integrability of angular collision kernel,  we mean 
\begin{equation}\label{b integrablity}
	\| b \|_{L^1} = 2 \pi \int_{0}^1 b(x) \, \md x < \infty, \quad \text{and} \quad 	\| b \|_{L^2}^2 =  { 2 \pi }  \int_{0}^1 b(x)^2 \, \md x < \infty
\end{equation}

  Sometimes the weight will be expressed in terms of the following notation:
\begin{equation}\label{nmb plus}
	\eta^+ := \eta + \varepsilon, \quad \text{for any } \varepsilon>0 \ \text{arbitrary small}.
\end{equation}

\subsection{Overview of the previous study on  well-posedness, moments  and higher inte\-gra\-bility    }\label{Sec: Prelim}

In this section, we recall specific results from \cite{MC-Alonso-Gamba,MC-Alonso-Lp,MC-Alonso-Pesaro} concerning the Boltzmann equation \eqref{BE}--\eqref{BE split}, with the collision kernel given by the cut-off regime with the  angular part $b\in L^1$ and  hard-potentials in the total molecular energy  of the power $\gamma\in(0,2]$ (see \eqref{coll kernel assumpt} for an explicit expression), with  a parameter $\alpha>-1$, and with initial data
\begin{equation}\label{BE in}
	f(0, v, I)=f_0(v,I).
\end{equation}

\subsubsection{Existence and uniqueness theory}

Existence and uni\-que\-ness of global in time  classical solutions to the space homogeneous Boltzmann equation \eqref{BE} is established in the following theorem.
\begin{theorem}[Existence and Uniqueness,  Theorem 7.2 for $M=0$, $P=1$ in \cite{MC-Alonso-Gamba}, or Theorem 3.8 in \cite{MC-Alonso-Pesaro}]\label{Th well poss}
	Consider the collision kernel in the cut-off and hard-potentials form, see \eqref{coll kernel assumpt}, and initial data \eqref{BE in} satisfying 
	\begin{equation}\label{Omega}
		f_0(v,I) \in	\tilde{\Omega} = \left\{ g(v,I) \in L^1_2: \ g \geq 0, \ 0< \| g \|_{L^1_{0}}  < \infty, \
		\| g \|_{L^1_{2^+}}   < \infty \right\} \subset L^1_2.
	\end{equation}
	Then the Cauchy problem \eqref{BE}-\eqref{BE in} has a unique solution in $\mathcal{C}([0,\infty), \tilde{\Omega})  \cap \mathcal{C}^1((0,\infty), L^1_2)$.
\end{theorem}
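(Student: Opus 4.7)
The plan is to combine an abstract ODE-in-Banach-space argument with a priori moment propagation, in the spirit of Arkeryd and of Mischler--Wennberg, adapted to the polyatomic collision geometry. I would first recast the Cauchy problem as the mild identity
\[
  f(t) = f_0 + \int_0^t Q(f,f)(s)\,\md s
\]
in the Banach space $L^1_2(\bRfp)$. Since the hard-potentials factor $\la v,I\ra^{\g}$ is unbounded, a direct Picard iteration does not close; I would therefore introduce a truncated kernel by replacing the kinetic factor with its minimum with $n\in\mathbb{N}$, obtaining bounded bilinear operators $Q_n^{\pm}$ on $L^1_2$. For each $n$, the Banach fixed-point theorem yields a unique local solution $f_n\in\mathcal{C}^1([0,T_n);L^1_2)$. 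Nonnegativity is preserved by absorbing the truncated loss as a multiplicative exponential, i.e.\ writing
\[
  f_n(t) = e^{-\int_0^t \nu_n[f_n]\,\md s}\,f_0 + \int_0^t e^{-\int_s^t \nu_n[f_n]\,\md\tau}\, Q_n^+(f_n,f_n)(s)\,\md s,
\]
and iterating, which preserves positivity provided $f_0\geq 0$.

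The next step, and the heart of the argument, is uniform-in-$n$ propagation of $\|f_n\|_{L^1_{2^+}}$, which is precisely the reason the initial-data class $\tilde{\Omega}$ demands this integrability. I would derive a polyatomic Povzner-type inequality: using the post-collisional parameterisation by $(\sigma,r,R)\in\domparam$ and the sharing of total energy, one shows that for any $\kk>2$ the averaged collisional increment
\[
  \int_{\domparam}\bigl[\la v',I'\ra^{\kk}+\la v'_*,I'_*\ra^{\kk}-\la v,I\ra^{\kk}-\la v_*,I_*\ra^{\kk}\bigr]\dparam
\]
is bounded above by $-\delta_{\kk}\bigl(\la v,I\ra^{\kk}+\la v_*,I_*\ra^{\kk}\bigr)+C_{\kk}\bigl(\la v,I\ra\,\la v_*,I_*\ra\bigr)^{\kk/2}$ with $\delta_{\kk}>0$. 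Inserting this into the weak form of $Q$ gives the ODE
\[
  \tfrac{\md}{\md t}\|f\|_{L^1_{\kk}}\le -A\,\|f\|_{L^1_{\kk+\g}}+B\,\|f\|_{L^1_{\kk}},
\]
which, combined with conservation of mass and total energy and interpolation in the moment index, provides the uniform a priori bound, forces $T_n=\infty$, and via weak $L^1$ compactness (Dunford--Pettis) allows passing $n\to\infty$ along the truncated sequence to a solution of the original equation.

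Uniqueness follows from a stability estimate: for two solutions $f,g$ with common initial data, the bilinear decomposition $\partial_t(f-g)=Q(f-g,f)+Q(g,f-g)$ bounds the right-hand side in $L^1_2$ by $C\,\|f-g\|_{L^1_2}$ with $C$ depending only on the propagated $L^1_{2^+}$-norm of $f$ and $g$, after which Gr\"onwall closes the argument. The $\mathcal{C}^1((0,\infty);L^1_2)$ regularity then follows because $Q(f(t),f(t))\in\mathcal{C}([0,\infty);L^1_2)$ along the solution, making the Duhamel identity differentiable in time. \textbf{The hard part} will be the polyatomic Povzner estimate itself: the energy-sharing variables $(r,R)$ couple nontrivially with the angular variable $\sigma$, so the post-collisional bracket $\la v',I'\ra$ is an intricate function of $(|v-v_*|^2,I,I_*)$, and one must perform the $\domparam$-average carefully, isolating the dissipative contribution and exploiting strict convexity of $x\mapsto x^{\kk/2}$ to extract the negative sign uniformly away from the grazing regime. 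All the other steps---truncation, positivity propagation, weak compactness, and the Gr\"onwall uniqueness---are structural and transfer from the monatomic theory once this polyatomic dissipation of moments is secured.
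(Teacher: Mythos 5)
Your overall scheme (kernel truncation, Picard for the truncated problems, a polyatomic Povzner estimate for uniform moment propagation, compactness to pass to the limit, Gr\"onwall for uniqueness) is the classical Arkeryd/Mischler--Wennberg route, and it is genuinely different from the proof behind the quoted theorem: the cited argument (from \cite{MC-Alonso-Gamba,MC-Alonso-Pesaro}, following \cite{Alonso-cooling}) never truncates or compactifies, but applies an ODE-in-Banach-space theorem of Martin, verifying H\"older continuity, a sub-tangency condition and a one-sided Lipschitz condition for $Q$ on the invariant set, first for data with $2+2\gamma$ moments and then relaxing to $2^+$ moments by an approximation of the initial data. The difference is not merely cosmetic, because two steps of your plan do not close as written. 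The main gap is the uniqueness step: for hard potentials the kernel weight $(E/m)^{\gamma/2}\leq \langle v,I\rangle^{\gamma}\langle v_*,I_*\rangle^{\gamma}$ forces the natural bilinear bound $\|Q(h,f)\|_{L^1_2}\lesssim \|h\|_{L^1_{2+\gamma}}\|f\|_{L^1_{2+\gamma}}$, so the right-hand side of $\partial_t(f-g)=Q(f-g,f)+Q(g,f-g)$ is \emph{not} bounded in $L^1_2$ by $C\|f-g\|_{L^1_2}$ with $C$ depending only on $L^1_{2^+}$-norms; the difference $f-g$ itself appears with the weight $2+\gamma$, which cannot be absorbed. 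To close one needs precisely the one-sided mechanism the original proof isolates: test the difference equation with $\mathrm{sgn}(f-g)\langle v,I\rangle^{2}$ so that the loss term contributes the higher-weight norm of $|f-g|$ with a favourable sign, leaving a Gr\"onwall coefficient of the form $\|f\|_{L^1_{2+\gamma}}+\|g\|_{L^1_{2+\gamma}}$; and since the data only have $2^+$ moments, this coefficient blows up as $t\to 0$ like $t^{-(\gamma-\varepsilon)/\gamma}$ by the generation estimate, so one must additionally check that this singularity is integrable in time --- this is exactly where the strict inequality ``$2^+$'' (rather than $2$) in $\tilde{\Omega}$ is used, and it is absent from your argument.

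A second, smaller gap is the limit passage: Dunford--Pettis requires uniform integrability, which uniform moment bounds alone do not provide (they give tightness in $(v,I)$ but do not rule out concentration), and the class $\tilde{\Omega}$ carries no entropy or $L^p$ assumption you could invoke. The standard repairs are either an entropy bound along the truncated flow, a monotone approximation scheme, or a stability estimate showing the truncated sequence is Cauchy in $\mathcal{C}([0,T];L^1_2)$ --- but the latter runs into the same one-sided Lipschitz issue as uniqueness. Your Povzner step itself is sound in shape and is indeed available in the polyatomic setting (it underlies Theorem \ref{Th L1}); once you replace the naive Gr\"onwall bound by the one-sided estimate with an integrable-in-time coefficient and justify the compactness (or avoid it altogether, as the cited proof does via sub-tangency on the invariant set $\tilde{\Omega}$), your alternative route would deliver the same statement.
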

This  result is obtained using   an approach of \cite{Alonso-cooling} based on  ODE theory on Banach spaces \cite{Martin-ODE}. The method consists in:  {(i)} proving that the collision operator satisfies the following three conditions: H\"older continuity,  Sub-tangent and One-sided Lipschitz conditions, which allows to find unique solution having  initially $2+2\g$ moments \cite{MPC-IG-poly}, and  {(ii)} relaxing the assumption on initial data to $2^+$, using the fact that  the collision operator is one-sided Lipschitz assuming only $2^+$  moments \cite{MC-Alonso-Gamba}, which allows to construct an approximating sequence of solutions drawn from the Step  {(i)} and pass to the limit to find solutions in the bigger space, as Theorem \ref{Th well poss} shows.

\subsubsection{$L^1$-theory} The $L^1$-theory  regards ge\-ne\-ration and propagation of moments associated to the solution of the Boltzmann equation. The ge\-ne\-ration or creation of moments  means that the solution of the Boltzmann equation initially  having the finite and strictly positive $L^1_0$-moment (mass) and finite  $L^1_2$-moment   (energy), instantaneously creates, for any time $t>0$), $L^1_k$-moment of any order $k>2$. Propagation of moments of the solution to the Boltzmann equation means that, if initially $L^1_k$-moment is finite, then it will remain finite uniformly in time.

More precisely, let $C^{\text{gen}}$ and $\Cpr$ denote generic constants explicitly computed in \cite{MC-Alonso-Gamba}, depending on $k$, $\gamma$, $\alpha$, the angular part $b$, and the conservative moments: mass  $\| f \|_{L^1_0} = \| f_0 \|_{L^1_0}$ and energy $\| f \|_{L^1_2} = \| f_0 \|_{L^1_2}$. The following theorem holds.
\begin{theorem}[$L^1$-theory, Theorem 6.2 for $M=0$, $P=1$ in \cite{MC-Alonso-Gamba}, or Theorem 3.6 in \cite{MC-Alonso-Pesaro}]\label{Th L1} If $f(t,v,I)$ is a solution of the Boltzmann equation \eqref{BE}  with \eqref{BE in}, then for  any $t>0$ and $k>2$,
		\begin{enumerate}
		\item (Polynomial moments generation estimate.) 
\begin{equation}\label{poly gen}
	\| f \|_{L^1_k} \leq C^{\text{gen}} \left(1  +  t^{-\frac{k-2}{\g}} \right). %\qquad \forall t>0,
\end{equation}
		\item (Polynomial moments propagation estimate.) If moreover $	\| f_0 \|_{L^1_k} < \infty$,  then
		\begin{equation}\label{poly prop}
			\| f \|_{L^1_k} \leq \max \left\{  e \,	\| f_0 \|_{L^1_k}, \Cpr \right\}.%, \qquad \forall t\geq 0,
		\end{equation}
where	$e$ is the Euler's number.
	\end{enumerate}
\end{theorem}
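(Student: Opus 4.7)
The plan is to follow a Povzner-type moment argument adapted to the polyatomic framework: derive a closed differential inequality for the scalar quantity $m_k(t) := \|f(t, \cdot, \cdot)\|_{L^1_k}$ and then invoke elementary ODE comparison to obtain both generation and propagation in a single stroke.

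First, I would take the weak formulation of \eqref{BE} tested against the polynomial weight $\la v, I\ra^{k}$, symmetrizing over the colliding pair to obtain
\begin{equation*}
\frac{\md}{\md t} m_k(t) = \frac{1}{2} \iis f\, f_* \int_{\domparam} \bigl[ \la v', I' \ra^{k} + \la v'_*, I'_* \ra^{k} - \la v, I \ra^{k} - \la v_*, I_* \ra^{k} \bigr]\, \mathcal{B}\, \dparam \, \md v_* \md I_* \, \md v \, \md I,
\end{equation*}
where $\mathcal{B}$ denotes the cut-off hard-potential kernel carrying the factor $E^{\gamma/2}$ in the total pair energy together with the $\alpha$-weight $R^\alpha(1-R)^\alpha$ in the internal-energy partition. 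The central ingredient is a polyatomic Povzner estimate: after averaging over the collisional parameters $(\sigma, r, R) \in \domparam$, the weight difference in brackets is dominated above by $-\kappa\bigl[\la v, I\ra^{k} + \la v_*, I_*\ra^{k}\bigr]$ plus a mixed remainder whose moments interpolate strictly between orders $0$ and $k$. Recombining with the kinetic factor $E^{\gamma/2}$, together with mass and energy conservation ($m_2 \equiv \|f_0\|_{L^1_2}$) and H\"older interpolation between $m_0$, $m_2$ and $m_k$, closes the estimate in the single unknown $m_k$ and produces a Bernoulli-type inequality
\begin{equation*}
\frac{\md}{\md t} m_k(t) \leq - \tilde{A}\, m_k(t)^{1+\gamma/(k-2)} + \tilde{B},
\end{equation*}
with $\tilde{A}, \tilde{B} > 0$ explicitly controlled by mass, energy, $\alpha$, $\gamma$, and $\|b\|_{L^1}$.

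Finally, I would conclude by ODE comparison. Any nonnegative function satisfying the inequality above is dominated by $\max\{\Cpr, c\, t^{-(k-2)/\gamma}\}$ for every $t>0$ regardless of whether $m_k(0)$ is finite, which yields the generation estimate \eqref{poly gen}; and whenever $m_k(0) < \infty$, the barrier $\max\{e\, m_k(0), \Cpr\}$ turns out to be invariant under the same inequality, giving \eqref{poly prop}. The main obstacle will be the polyatomic Povzner step itself: extracting a strictly positive coercive constant $\kappa$ uniformly in the pre-collisional variables, after averaging against the $\alpha$-dependent parametrization of the post-collisional state, requires the delicate angular-plus-energy-partition analysis developed in \cite{MPC-IG-poly} and refined in \cite{MC-Alonso-Gamba}, and this is precisely where the quantitative dependence of $C^{\mathrm{gen}}$ and $\Cpr$ on $(\alpha, \gamma, \|b\|_{L^1})$ originates.
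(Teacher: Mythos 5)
Your proposal matches the approach behind this statement: the paper does not reprove Theorem \ref{Th L1} but quotes it from \cite{MC-Alonso-Gamba,MC-Alonso-Pesaro}, describing exactly the strategy you outline — a Povzner-type estimate obtained by averaging over the Borgnakke--Larsen parameters $(\sigma,r,R)$, moment interpolation using conserved mass and energy to close a Bernoulli-type differential inequality for $\|f\|_{L^1_k}$, and ODE comparison yielding both generation and propagation. Only a cosmetic slip: the $\alpha$-weight is $\dpo = r^{\alpha}(1-r)^{\alpha}(1-R)^{2\alpha+1}\sqrt{R}$ as in \eqref{fun r R}, not $R^{\alpha}(1-R)^{\alpha}$, but this does not affect the argument's structure.
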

These results rely on a dominant effect of the negative contribution of the collision operator (loss term) with respect to the positive one (gain term), in terms of moments. They are reached by averaging the gain term over the set of variables $(\param)$  known as Povzner-type estimate for the classical Boltzmann equation, and by using    moment-interpolation techniques.

\subsubsection{$L^p$-theory}

Higher-integrability theory is developed in \cite{MC-Alonso-Lp} for a mixture of solely polyatomic gases, and is complemented in \cite{MC-Alonso-Pesaro} with the study of $L^p$-tails for a single polyatomic gas. The idea is to extend  results of Theorem  \ref{Th L1} from $L^1$ to $L^p$ for $p> 1$, including the case $p=\infty$. 

An interesting feature of the polyatomic Boltzmann model is that the actual range of $p>1$ depends on the polyatomic parameter $\alpha>-1$ and on the chosen collision kernel. In particular, for the collision kernel chosen in this paper (below in \eqref{coll kernel assumpt}) with hard potentials of exponent $\gamma \in (0,2]$,  the range of admissible $p$ can be explicitly computed from the condition (5.2) in \cite{MC-Alonso-Lp},
\begin{align}
	& \text{(1)} \ \text{when} \ \  p\in(1,\infty), \ \text{then} \ p \, \alpha>-1 \ \text{and} \label{range p} \\
	& \qquad   \text{(a)} \ \text{if} \  \alpha<\gamma/2, \text{then} \ p < \frac{1+\g/2}{\g/2-\alpha},  \quad \text{or} \ 
	    \text{(b)}  \ \text{if}  \ \alpha<-1/2,  \text{then} \  p < - \frac{1 }{1+2 \alpha}; \nonumber\\
		& \text{(2)} \ \text{when} \ \  p = \infty, \ \text{then} \  \alpha > \g/2.  \label{range p inf}
\end{align}
Since the relevant case considered in this paper is $p=2$, the above conditions simplify to the following condition on $\alpha$,
\begin{equation*}
	\alpha > \frac{\gamma}{4} - \frac{1}{2}, \quad \gamma \in (0,2].
\end{equation*}

Relying on $L^1_k$-moment propagation result (Theorem \ref{Th L1}, item (2)), the $L^p$-propagation result consist in proving that a solution of the Boltzmann equation with initially bounded $L^p_k$-moment and $L^1_{k+\g+1}$-moment will have bounded $L^p_k$-moment  at any time $t>0$. Then, using  $L^p_0$-propagation result and both generation and propagation in $L^1$ (Theorem \ref{Th L1}), the $L^p$-tails ge\-ne\-ration property holds in the following sense.  If a solution  to the Boltzmann equation  initially has $L^p_0$- and $L^1_{\g+1}$-moment bounded, then $L^p_k$-moment will emerge after some time $t_0 > 0$. The emergence will happen instantaneously, i.e. for any time $t>0$, if we additionally assume that   $L^1_{k+\g+1}$-moment is  bounded initially. More precisely, the following statement holds.
\begin{theorem}[Theorem 9.1 for $P=1$ and the collision kernel \eqref{coll kernel assumpt} in \cite{MC-Alonso-Lp}, and Theorem 3.17 in \cite{MC-Alonso-Pesaro}]\label{Th prop Lp} 
 	Let  $k\geq0$, $0<\g\leq2$, $C$ denote a generic constant,  and   $f(t,v,I)$ be a solution of the Boltzmann  equation   	\eqref{BE}--\eqref{coll kernel assumpt} in the sense of Theorem \ref{Th well poss}, with $p \in (1,\infty]$ from \eqref{range p}--\eqref{range p inf}.
 	\begin{enumerate}
 			\item ($L^p_k$-propagation estimate.) If $\| f_0    \|_{L^p_{k}}< \infty$ and $\| f_0    \|_{L^1_{\max\{2, k+ \g +1\}}}< \infty$, then
	\begin{equation}\label{Lp propagation}
	\| f  \|_{L^p_{k}} \leq \max\left\{  \| f_0    \|_{L^p_{k}}, \Cpr_p \right\}, \quad \text{for} \ \ t\geq 0,
\end{equation}
 		\item ($L^p$-tails   generation estimate.)  If $\| f_0    \|_{L^p}< \infty$ and $\| f_0    \|_{L^1_{\max\{2, \g +1\}}}< \infty$, then
 		\begin{enumerate}
 			\item for $k>\max\{0,1-\g\}$ and $t_0>0$ it holds
 		\begin{equation}\label{Lp generation t0}
 			\| f \|_{L^p_k} \leq C^{\text{gen}}_{p; t_0} \left(1  +  (t-t_0)^{-\frac{k}{\g}} \right), \qquad  t>t_0,
 		\end{equation}
 	\item if additionally $\| f_0    \|_{L^1_{\max\{2, k+ \g +1\}}}< \infty$, $k\geq0$, then
 	 		\begin{equation}\label{Lp generation t}
 		\| f \|_{L^p_k} \leq C^{\text{gen}}_{p; 0}  \left(1  +  t^{-\frac{k}{\g}} \right), \qquad  t>0.
 	\end{equation}
 		\end{enumerate}
 	\end{enumerate}
\end{theorem}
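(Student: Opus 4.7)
The plan is to derive a differential inequality for $y(t) := \|f(t,\cdot,\cdot)\|_{L^p_k}^p$ in which the dissipative loss dominates the gain at high energies, and then translate this into the two stated bounds by an ODE comparison argument. The starting point is to multiply the Boltzmann equation \eqref{BE} by $p\,f^{p-1}\langle v,I\rangle^{p k}$ and integrate over $\bRfp$, using the splitting \eqref{BE split}. For the loss, a Povzner-type lower bound $\nu[f](v,I) \geq c\,\|f\|_{L^1_0}\langle v,I\rangle^{\g}$, available in the hard-potential regime, delivers a negative contribution of order $-c'\|f\|_{L^p_{k+\g/p}}^p$, i.e.\ dissipation with $\g/p$ extra weight relative to $k$.

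The main technical obstacle is a weighted $L^p$ estimate for the gain of the schematic form
\begin{equation*}
\int_{\bRfp} Q^+(f,f)\,f^{p-1}\langle v,I\rangle^{p k}\,\md v\,\md I \leq C\,\|f\|_{L^1_{k+\g+1}}\,\|f\|_{L^p_{k+\g/p-\eps}}^p + \text{lower order},
\end{equation*}
so that the extra weight produced by the loss is not entirely consumed by the gain. In the polyatomic setting this is delicate because $Q^+$ integrates over the twelve-dimensional collision manifold parameterized by $(\sigma,r,R)\in\bSp\times[0,1]^2$, with the polyatomic density $R^{\alpha}(1-R)^{\alpha}$ entering the Jacobian. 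A Carleman-type change of variables combined with a Young-type convolution inequality should yield the bound provided $R^{\alpha}(1-R)^{\alpha}$ is integrable in the Hölder dual exponent $q=p/(p-1)$; this is precisely the origin of the admissible range \eqref{range p}–\eqref{range p inf}, in particular of the conditions $p\alpha>-1$ and the case-dependent upper bounds on $p$.

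Combining the gain and loss and interpolating
\begin{equation*}
\|f\|_{L^p_k}^p \leq \|f\|_{L^p_{k+\g/p}}^{p\theta}\,\|f\|_{L^1_{\kappa}}^{p(1-\theta)}
\end{equation*}
for a suitable $\theta\in(0,1)$ and weight $\kappa$, one is led to an inequality $y'(t) \leq A(t) - B(t)\,y(t)^{1+\delta}$ with $A,B$ controlled by lower $L^1$-moments of $f$. For the propagation statement \eqref{Lp propagation}, the hypothesis $\|f_0\|_{L^1_{\max\{2,k+\g+1\}}}<\infty$ together with Theorem \ref{Th L1}(2) bounds $A$ and $B$ uniformly in time, and a standard barrier argument yields $y(t) \leq \max\{y(0),(\Cpr_p)^p\}$.

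For the generation parts (2a)–(2b), the key observation is that the Bernoulli-type inequality $y' \leq A - B\,y^{1+\delta}$ does not require an initial bound on $y$: even starting from $y=+\infty$, its solutions decay like $t^{-1/\delta}$ for short times, which produces the claimed $(t-t_0)^{-k/\g}$ singularity after identifying $\delta$ with $\g/k$ through the interpolation exponent. Item (2a) then only needs the relevant $L^1_{k+\g+1}$-moments to appear after a time lag $t_0>0$, guaranteed by the moment generation \eqref{poly gen}, and the restriction $k>\max\{0,1-\g\}$ arises from requiring the Povzner exponent to strictly dominate the loss from this interpolation. Item (2b) upgrades $t_0$ to zero by assuming the relevant $L^1$-moments are finite initially, so that $B$ is bounded on $(0,\infty)$ from the start, after which the same ODE analysis applies verbatim.
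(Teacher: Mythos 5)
This theorem is not actually proved in the present paper: it is imported from Theorem 9.1 of the cited $L^p$ paper and Theorem 3.17 of the companion work, and the text only records the strategy (a differential inequality in the spirit of the monatomic theory: split the weak form into gain and loss, estimate the gain through the averaging-operator representation as in \eqref{S operator gen}--\eqref{Lp weak form}, bound the loss from below by the collision frequency, and recover $p=\infty$ as a limit of $p<\infty$ because all constants are explicit). Your outline follows the same general strategy, so in spirit it is the right road map, but as a proof it has several genuine gaps.

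First, the central technical ingredient, the weighted $L^p$ estimate for the gain tested against $f^{p-1}\langle v,I\rangle^{pk}$, is only asserted (``should yield''), and the heuristic you give for the admissible range \eqref{range p}--\eqref{range p inf} is not accurate: the Borgnakke--Larsen density is $d_\alpha(r,R)=r^\alpha(1-r)^\alpha(1-R)^{2\alpha+1}\sqrt{R}$, cf.\ \eqref{fun r R}, not $R^\alpha(1-R)^\alpha$, and the constraints such as $p\,\alpha>-1$ and $p<(1+\gamma/2)/(\gamma/2-\alpha)$ originate from handling the factor $(I I_*/I'I'_*)^\alpha$ together with negative powers of the total energy in the kernel form of $Q^+$, not from dual-exponent integrability of an $R$-density alone; without this estimate the whole differential inequality is unsubstantiated. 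Second, the displayed interpolation $\|f\|^p_{L^p_k}\le\|f\|^{p\theta}_{L^p_{k+\gamma/p}}\|f\|^{p(1-\theta)}_{L^1_{\kappa}}$ cannot hold for any $\theta\in(0,1)$: matching the integrability exponents in H\"older forces $\theta=1$. The actual proofs interpolate within the weighted $L^p$ scale, $\|f\|_{L^p_k}\le\|f\|^{\theta}_{L^p_{k+\gamma/p}}\,\|f\|^{1-\theta}_{L^p}$ with $\theta=k/(k+\gamma/p)$, which is why the unweighted $L^p$ bound must be controlled first and why the superlinear exponent is $1+\gamma/(pk)$; note also that with your $y=\|f\|^p_{L^p_k}$ the identification $\delta=\gamma/k$ would give $\|f\|_{L^p_k}\lesssim t^{-k/(p\gamma)}$, not the claimed $t^{-k/\gamma}$. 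Third, the lower bound $\nu[f]\ge c\,\|f\|_{L^1_0}\langle v,I\rangle^{\gamma}$ with a constant depending only on the mass is false (data concentrating near a point defeat it); one needs mass, energy and a non-concentration control (entropy or an $L^2$ bound), exactly as in Lemma \ref{Prop: coll freq}. Finally, the statement covers $p=\infty$, which your $f^{p-1}$-testing argument does not reach; in the cited works this case is obtained by passing to the limit $p\to\infty$ in estimates whose constants are explicit, a step your proposal omits entirely.
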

The proof exploits the differential inequality approach \cite{MV}, whose strategy is to split the collision operator weak form into parts corresponding to   gain and   loss terms, and then to find appropriate estimates.  The estimate on the gain part relies on finding a suitable representation through the averaging operator, while the the estimate on the loss term uses a priori conservation laws of mass and energy and entropy. Since all the involved constants are explicit,  the case $p = \infty$  is found as a limit of the case $p < \infty$.

\section{Presentation of the main results}\label{Sec: main}

The goal of this paper is to provide a detailed analysis of propagation of smoothness and exponential decay of singularities for solutions to the polyatomic Boltzmann equation \eqref{BE}--\eqref{BE split}. Our approach relies on the regularizing effects of the gain term in the Boltzmann collision operator. We restrict the attention to the first-order derivatives in the velocity and internal energy variables.

Within the monatomic framework, when molecular velocity is the only microscopic variable, the smoothing properties of the gain operator have been extensively studied \cite{L, W, BD, LU, MV,LD-Parma}. Initial study of these   effects required collision kernels to be smooth with support avoiding certain points \cite{L}, with the proof relying on the stationary phase method. An alternative strategy was  later provided in \cite{W}  employing the properties of the generalized Radon transform. These works  are in the $L^1$-$L^2$ setting and they were used in the theory of $L^p$-propagation \cite{MV}. 

A second type of estimates is the one from  \cite{BD}, which is in a $L^2$-$L^2$ framework and is used for the propagation of regularity \cite{MV}. This approach requires $L^2$-integrability of the angular part,  a condition that was relaxed in \cite{AGT} to only $L^1$-integrability by means of a commutator formula. In general, these methods are based   on  the Fourier transform for the gain part with Maxwell molecules introduced in \cite{Bob-Exact-solutions-Fourier}.

Our first main result addresses the smoothing effects of the gain collision operator $Q^+(f,g)(v,I)$ defined in \eqref{BE split}, with respect to the variables of the polyatomic Boltzmann model, namely the velocity variable $v$ and the internal energy variable $I$.

\begin{theorem}[Main result I: Smoothing effects of the gain operator]\label{inc-reg}
	Let $\gamma\in[0,2]$. \\
	\noindent 1. (Smoothing with respect to the velocity variable.) Let $\alpha>-1$, $b\in L^{2}$ in the sense of \eqref{b integrablity}, and  $f,\,g \in L^{1}_{\gamma}\cap L^{2}_{(\gamma+2)^+}(\bRfp)$. Then,
	\begin{equation}\label{eq Th smooth vel}
		\| Q^+(f,g) \|_{\dot{H}^{1}_{v}(\bRfp)}\leq \Csvel \, \|b\|_{L^{2}}\| f \|_{L^{2}_{(\gamma+2)^+}}\| g \|_{L^{2}_{(\gamma+2)^+}}.
	\end{equation}
	\noindent 2. (Smoothing with respect to the internal energy variable.) 	
  Let  $\alpha>0$,  $b \in L^1$ in the sense of \eqref{b integrablity}. Then, for any  $\delta\geq\max\{ \left(\frac{1}{2} - \alpha\right)^+,0\}$, and $f, g \in L^2_{(\g+2\delta+1/2)^+}(\bRfp)$, the following estimate holds 
		\begin{equation}\label{eq Th smooth int en}
	\|  \test \, \partial_{I}		Q^+(f,g)(v,I)  \|_{L^2(\bRfp)}
			\leq \Csen \,  \| b \|_{\Ls} \,  \| f\|_{L^2_{(\g+2\delta+1/2)^+}}    \| g\|_{L^2_{(\g+2\delta+1/2)^+}}.
		\end{equation}
	The constants $\Csvel$, $\Csen>0$ are  estimated at the end of the proof, in \eqref{C smooth vel} and \eqref{C smooth I}.
\end{theorem}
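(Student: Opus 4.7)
The plan is to split the argument into velocity-variable regularization and internal-energy-variable regularization, treating each estimate separately.

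For the velocity-variable estimate \eqref{eq Th smooth vel}, I would adapt the Bouchut--Desvillettes Fourier-based strategy to the polyatomic setting. Fixing $(I, I_*, r, R)$ as parameters, I take the Fourier transform of $Q^+(f,g)(v,I)$ in $v$. The post-collisional velocities depend on $\sigma\in\bSp$ through the standard $\sigma$-parametrization of the sphere determined by the kinetic part of the total energy $E$. Writing $\widehat{Q^+}(\xi,I)$ and computing $\int|\xi|^2 |\widehat{Q^+}(\xi,I)|^2 \md \xi \md I$, the angular integral produces a factor of $|b(\hat u\cdot\sigma)|^2$ that, after a change of variable $\sigma\mapsto\hat\xi\cdot\sigma$, is bounded by $\|b\|_{L^2}^2$. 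The kinetic factor $\Phi(E)\simeq E^{\gamma/2}$ together with the $(R(1-R))^\alpha(r(1-r))^\alpha$-type weights from the polyatomic measure are then controlled by a Cauchy--Schwarz in $(v_*,I_*,r,R)$ against $\la v,I\ra^{(\gamma+2)^+}\la v_*,I_*\ra^{(\gamma+2)^+}$, the exponent $(\gamma+2)^+$ being dictated by the borderline integrability of the energy factor.

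For the internal-energy estimate \eqref{eq Th smooth int en}, the Fourier trick is unavailable since $I$ is one-dimensional. Instead, I would shift the $I$-derivative onto the kernel by changing variables in the $R$-integration so that the dependence on $I$ (which enters through $E=\frac{|v-v_*|^2}{2}+\frac{I+I_*}{m}$ and through the primed internal energies $I'=rRE$, $I'_*=(1-r)RE$) is transferred to a variable that carries a smooth Jacobian. After this substitution, $\partial_I Q^+$ produces terms in which the derivative falls on the kinetic factor $E^{\gamma/2}$, on $R^\alpha$, $(1-R)^\alpha$, and boundary contributions, each of which can be estimated in $L^2$ using only $b\in L^1$ (Young-type inequality with $b$ outside), since now the regularization comes from integrating in $R$, not in $\sigma$. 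The factor $\test$ absorbs the singularity $I^{-\alpha-1/2}$ arising from the Jacobian, which is integrable precisely when $\delta\geq(\tfrac12-\alpha)^+$ and $\alpha>0$; the weight $(\gamma+2\delta+\tfrac12)^+$ on $f,g$ tracks the extra factor $E^{\gamma/2+\delta}$ plus the half-power needed to close Cauchy--Schwarz.

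The main obstacle in both parts is the bookkeeping of the various changes of variable and the induced Jacobians, which in the polyatomic setting carry powers of $R, 1-R, r, 1-r$ coupled with $I, I_*$ through $E$. In part~1, this manifests in making the Bobylev-type angular integration produce a clean $\|b\|_{L^2}^2$ factor while matching the hard-potentials kinetic weight with the $L^2_{(\gamma+2)^+}$-norms on $f,g$; in part~2, the delicate point is certifying integrability of the $I$-derivative near $\{I=0\}$, which is what forces the sharp threshold $\delta\geq(\tfrac12-\alpha)^+$ together with the assumption $\alpha>0$. Once these are in place, the final constants $\Csvel$ and $\Csen$ are obtained by collecting all Cauchy--Schwarz and Jacobian factors explicitly.
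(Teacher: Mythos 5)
Your overall strategy names the right ingredients (a Bouchut--Desvillettes Fourier argument for Part 1, moving the $I$-derivative onto the kernel for Part 2), but both parts have genuine gaps. In Part 1, the sketch never says where the factor $|\xi|^{2}$ is absorbed: Cauchy--Schwarz in $\sigma$ extracting $\|b\|_{L^{2}}$, followed by weighted Cauchy--Schwarz in $(v_*,I_*,r,R)$, gives at best an $L^{2}$ bound on $\reallywidehat{Q^+(f,g)}$ itself, not on $|\xi|\,\reallywidehat{Q^+(f,g)}$; the actual gain of one derivative in the paper comes from rewriting the $\sigma$-sphere integral via the fundamental theorem of calculus in a radial variable and passing to a full $\mathbb{R}^{3}$-integration in $\overline{\eta}$, which is exactly what produces the $|\xi|^{-2}$ factor in \eqref{Four-4}. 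Moreover, fixing $(I,I_*,r,R)$ and transforming only in $v$ hits two polyatomic-specific obstructions you do not address: the sphere radius in $v'=\frac{v+v_*}{2}+\sqrt{RE/m}\,\sigma$ is $\sqrt{RE/m}$, which mixes $|u|$ with $I+I_*$, so the Bobylev exchange $\hat u\leftrightarrow\hat\xi$ is not available until one performs the change $R\mapsto\tilde R=R\Et$ of \eqref{Et}; and with $I$ held fixed the pre--post (microreversibility) change of variables cannot be used, while the $L^{2}(\md I)$ integrability must still be produced -- the paper obtains it by Fourier transforming in $I$ as well and exploiting the oscillatory decay in $\omega$ of $\int_{0}^{1}e^{-i\omega(\cdot)r}r^{\alpha}(1-r)^{\alpha}\md r$ (Lemma \ref{Lemma F est}), and nothing in your outline plays that role.

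In Part 2 the proposed maneuver fails structurally: a single change of variables in the $R$-integration cannot make all three quantities $v'$, $I'$, $I'_*$ independent of $I$, since $E$ enters $v'$ through $\sqrt{RE}$ but enters $I'$, $I'_*$ through $(1-R)E$; whichever one-dimensional substitution you choose, at least one argument of $f(v',I')g(v'_*,I'_*)$ still depends on $I$, so $\partial_I$ falls on $f$ or $g$ -- precisely what must be avoided, because no derivative of $f,g$ is assumed. The paper instead changes the full pair $(v_*,I_*)\mapsto(v'_*,I'_*)$, arriving at the kernel form \eqref{coll gain operator new} in which $v',I',I'_*,E$ are $I$-independent and the only $I$-dependence is the explicit factor $I^{\alpha}(\max\{0,I_*\})^{\alpha}$ with $I_*$ given by \eqref{Is as I}; differentiating yields the singular factor $\alpha\left(\frac{1}{I}-\frac{1}{I_*}\right)$, which after returning to the weak form and multiplying by $\testp$ becomes $\alpha\left(r^{\delta-1}-\frac{r^{\delta}}{1-r}\right)(1-R)^{\delta-1}E^{\delta-1}$, tamed by the measure $\dpo$ through the averaging bound \eqref{S estimate L1 gen} (finiteness requiring $\alpha+\delta>\frac12$, i.e.\ your threshold) and by \eqref{constant Ca}, which is where the weight $(\g+2\delta+1/2)^{+}$ really comes from. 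Your claim that the derivative falls on $E^{\gamma/2}$, $R^{\alpha}$, $(1-R)^{\alpha}$ plus boundary terms does not reflect this structure, and (cosmetically) your collision rules $I'=rRE$, $I'_*=(1-r)RE$ should read $I'=r(1-R)E$, $I'_*=(1-r)(1-R)E$ as in \eqref{coll rules}.
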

Part (a) is proven in Section \ref{Sec: proof smooth v}, while Part (b) is proven in Section \ref{Sec: proof smooth I}.  The smoothing effect with respect to the velocity variable, Part (a),  is inspired by \cite{BD} that uses Fourier transform techniques, under the assumption $b\in L^2$. The smoothing effects with respect to the  internal energy variable in Part (b), is restricted by the regularity of the kernel of the gain operator. Namely, the strong form of the gain operator (see \eqref{coll gain operator}) contains a multiplicative term $I^\alpha$. To make the singularity of  its first derivative at the origin $L^2$-integrable, the multiplicative factor $I^\delta$ in the estimate \eqref{eq Th smooth int en} is added.

 The smoothing effects of the gain operator fall within the study of functional properties of the collision operator applied to a generic function, not necessarily related to a solution of the Boltzmann equation. Examining these functional properties has been recognized as a key element in the analysis within the space-homogeneous framework. A direct implication is that, following the so-called  differential inequality  approach, these estimates, when applied to the equation, yield understanding of the behavior of its solutions \cite{MV}. In a broader context, the properties of the collision operator are important for various applications, including the analysis of the spatially inhomogeneous Boltzmann equation, the Vlasov-Boltzmann equation, and the Boltzmann equation coupled with fluid dynamics or other kinetic equations.
 
Our second main result follows the aforementioned  differential inequality  approach to study regularity properties of the solution to the polyatomic Boltzmann equation. 

\begin{theorem}[Main result II: Propagation of regularity for solutions]\label{Th regularity}
Let $\g \in [0,2]$, $\alpha>0$ and $b\in L^1$  in the sense of \eqref{b integrablity}.  Let $f \in L^\infty([0,\infty), L^2_{(\gamma+2)^+}(\bRfp) )$ be a solution of the Boltzmann equation \eqref{BE} with  the collision kernel \eqref{coll kernel assumpt}. \\
\noindent 1. (Regularity with respect to the velocity variable.) Consider an initial data
$ f_0  \in \dot{H}^{1}_{v}$. Then, for any time $t\geq0$, the following estimate holds
\begin{equation}\label{regularity v}
	\| f \|_{\dot{H}^{1}_{v}}^2  \leq  \max \left\{ \| f_0 \|_{\dot{H}^{1}_{v}}^2, \Crgvel \right\}.  
\end{equation}
\noindent 2. (Regularity with respect to the internal energy variable.) 	 For  $\delta=\max\{ \left(\frac{1}{2} - \alpha\right)^+,0\}$, consider an initial data such that
$ I^{\delta} \partial_{I} f_0\in {{L}^{2}}$. Then, for any time $t\geq0$, the following estimate holds
	\begin{equation}\label{regularity I}
	\| I^{\delta} \partial_{I} f \|_{{L}^{2}}^2 \leq     \max \left\{    \| I^{\delta} \partial_{I} f_0 \|_{{L}^{2}}^2, \Crgen \right\}.
\end{equation}
The constants  $\Crgvel, \Crgen>0$ are estimated at the end of the proof, in \eqref{C reg vel} and \eqref{C reg en}.
\end{theorem}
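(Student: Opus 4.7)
The strategy is the differential inequality method of Mouhot-Villani: differentiate the equation \eqref{BE}--\eqref{BE split} in the microscopic variable of interest, test against the matching (weighted) derivative of $f$, use Theorem \ref{inc-reg} to handle the gain contribution, and extract coercivity from the loss term via the standard hard-potentials lower bound $\nu[f](v,I)\geq c_0\la v,I\ra^{\g}$, which holds uniformly in $t$ since mass is conserved. The hypothesis $f\in L^\infty_t(L^2_{(\g+2)^+})$ combined with the propagation of higher weighted $L^2$-moments from Theorem \ref{Th prop Lp} makes all the forcing terms uniformly bounded in time, so a single ODE-comparison step, exactly as in \eqref{poly prop} and \eqref{Lp propagation}, will close \eqref{regularity v}--\eqref{regularity I}.

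For \textbf{Part 1}, after a standard mollification argument to justify differentiation, I would differentiate in $v_i$, test against $\partial_{v_i}f$, integrate over $\bRfp$, and sum over $i=1,2,3$, obtaining
\begin{equation*}
\frac{1}{2}\frac{d}{dt}\|f\|_{\dot{H}^{1}_{v}}^2 = \sum_{i=1}^3 \left(\int \partial_{v_i}f\,\partial_{v_i}Q^+(f,f) - \int (\partial_{v_i}f)^2\,\nu[f] - \int f\,\partial_{v_i}f\,\partial_{v_i}\nu[f]\right).
\end{equation*}
Cauchy-Schwarz and Theorem \ref{inc-reg}(1) bound the first sum by $\Csvel\|b\|_{L^2}\|f\|_{L^2_{(\g+2)^+}}^2\|f\|_{\dot{H}^{1}_{v}}$ (using the commutator relaxation mentioned in the introduction if one wants only $b\in L^1$). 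The second sum is the coercive part and dominates $c_0\sum_i\int(\partial_{v_i}f)^2\la v,I\ra^{\g}$. For the third, the pointwise bound $|\partial_{v_i}\nu[f]|\lesssim\|f\|_{L^1_\g}\la v,I\ra^{(\g-1)^+}$ and a weighted Cauchy-Schwarz split it into a piece absorbable into the coercive term and a remainder controlled by propagated $L^2$-moments. A Young absorption of the gain contribution into the coercive norm (using $\la v,I\ra\geq1$) then yields $\frac{d}{dt}\|f\|_{\dot{H}^{1}_{v}}^2 + c\|f\|_{\dot{H}^{1}_{v}}^2\leq K$ with $K$ uniformly bounded in $t$, which is exactly the differential inequality giving \eqref{regularity v}.

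\textbf{Part 2} proceeds identically: differentiate in $I$ and test against $I^{2\delta}\partial_I f$. Cauchy-Schwarz and Theorem \ref{inc-reg}(2) control the gain term (the choice $\delta=\max\{(\tfrac12-\alpha)^+,0\}$ is precisely the threshold making \eqref{eq Th smooth int en} applicable), the loss yields the coercive quantity $c_0\int I^{2\delta}(\partial_I f)^2\la v,I\ra^{\g}$, and the commutator piece $\int I^{2\delta}f\,\partial_I f\,\partial_I\nu[f]$ is handled via the pointwise estimate $|\partial_I\nu[f]|\lesssim\|f\|_{L^1_\g}\la v,I\ra^{(\g-2)^+}$ and weighted Cauchy-Schwarz. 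Since no integration by parts in $I$ is performed, no boundary term at $I=0$ appears, and the same ODE-comparison argument concludes \eqref{regularity I}.

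The \textbf{main obstacle} is the exact bookkeeping of the weights $\la v,I\ra^{\cdot}$ and $I^{\cdot}$ needed to ensure that the commutator-type term with $\partial\nu[f]$ cleanly splits into a piece that can be absorbed into the coercive loss and a remainder involving only moments already controlled by Theorem \ref{Th prop Lp}; the tight choice $\delta=\max\{(\tfrac12-\alpha)^+,0\}$ in Part 2 is forced precisely by this balance together with the threshold of \eqref{eq Th smooth int en}. A secondary technical point is the rigorous justification of the formal differentiate-and-test step, which is standard via mollification of the initial data, proof of the estimate with constants independent of the mollification parameter, and a lower-semi-continuity passage to the limit in the $\dot{H}^{1}_{v}$ and weighted $L^2$ norms.
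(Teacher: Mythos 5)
There is a genuine gap in Part 1: the theorem assumes only $b\in L^1$, but your bound on the gain contribution invokes Theorem \ref{inc-reg}(1), whose estimate \eqref{eq Th smooth vel} carries the factor $\|b\|_{L^2}$ and is simply unavailable (or vacuous) for $b\in L^1$. You acknowledge this only through the parenthetical ``commutator relaxation mentioned in the introduction'', but that relaxation is the core of the argument under the stated hypotheses, not a cosmetic adjustment. The paper's proof splits $b=b_2+\tilde b$ with $b_2\in L^2$ and $\|\tilde b\|_{L^1}\leq\varepsilon$, applies the smoothing estimate only to $Q^+_{b_2}$, and treats $Q^+_{\tilde b}$ via the velocity commutator of Proposition \ref{inc-reg-b1}. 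Crucially, the primary term \eqref{th eq prim vel} of that commutator is $2\,Q^+_{\tilde b}(|\nabla_v f|,f)$, whose contribution after testing against $\partial_{v_i}f$ is, by \eqref{Lp weak form}, of size $\varepsilon\,\tilde\rho\,\|f\|_{L^1_\g}\|\nabla_v f\|^2_{L^2_{\g/2}}$ --- i.e.\ \emph{quadratic} in the derivative, exactly of the same order as the coercive loss term $-\mA\|\nabla_v f\|^2_{L^2_{\g/2}}$ from \eqref{coll freq lower est}. Your proposed ``Young absorption of the gain contribution into the coercive norm'' cannot handle this piece: absorption is possible only because its coefficient is proportional to $\|\tilde b\|_{L^1}\leq\varepsilon$, and $\varepsilon$ is then chosen small relative to $\mA$ and $\|f\|_{L^1_2}$ (see \eqref{epsilon}). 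Without the splitting, the commutator proposition, and this smallness mechanism, the differential inequality does not close for $b\in L^1$. (If you added $b\in L^2$ as a hypothesis, your Part 1 would essentially coincide with the paper's argument for the $b_2$ piece; Part 2 of your proposal is essentially the paper's proof, since the $I$-smoothing estimate \eqref{eq Th smooth int en} indeed only needs $b\in L^1$.)

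A secondary but real flaw concerns your pointwise bounds on the derivatives of the collision frequency: you claim $|\partial_{v_i}\nu[f]|\lesssim\|f\|_{L^1_\g}\la v,I\ra^{(\g-1)^+}$ and $|\partial_I\nu[f]|\lesssim\|f\|_{L^1_\g}\la v,I\ra^{(\g-2)^+}$. These fail for $\g<1$ (respectively $\g<2$), because the kernels $\E^{(\g-1)/2}$ and $\E^{\g/2-1}$ are singular as $E\to 0$ and cannot be controlled by polynomial $L^1$-moments pointwise. The correct route, as in Proposition \ref{inc-loss-reg-b1}, is a Cauchy--Schwarz argument against a locally square-integrable singularity, yielding \eqref{est derivative} and \eqref{est derivative energy} with the norms $\|f\|_{L^2_{(\g+3/2)^+}}$ and $\|f\|_{L^2_{(\g+1/2)^+}}$; these are then compatible with the assumed $L^\infty_t L^2_{(\g+2)^+}$ bound, so the fix is available but must replace your $L^1$-based estimates.
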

The proof of this theorem is given in Section \ref{Sec: proof reg}. Inspired by the idea of \cite{AGT}, the smoothing effects of the gain operator   are complemented by the commutator formula in the velocity variable, in order to relax the assumption on $b$ to only $L^1$-integrability. Thus, we include an intermediate Proposition \ref{inc-reg-b1} containing the velocity commutator formula for the gain operator, when $\alpha>0$, $\g \in [0,2]$, $b\in L^1$. 

To develop the regularity result for the solution of the Boltzmann equation, the angular kernel is then decomposed  into two parts: $L^2$ part for which the gain of regularity of Theorem \ref{inc-reg}, Part (a), can be applied, and  $L^1$ part for which there is no gain of regularity, but can be made as small as desired. This latter piece will be controlled by the negative part coming from the loss operator. Thus,  the suitable commutator formulas  for the loss operator in both velocity and internal energy variables is developed   in Proposition \ref{inc-loss-reg-b1}, for $\alpha>-1$, $\g \in [0,2]$, $b\in L^1$.

After the propagation of smoothness, the aim is to obtain the estimates on the decay of singularities of initial datum, leading to the so-called decomposition theorem that states that a solution can be decomposed into a sum of the smooth part and a non-smooth (rough) part whose amplitude decays exponentially fast, proven in \cite{MV,AGT} in the monatomic framework. This concept is extended to a single polyatomic gas in our third main result. 

%Finally, our third main result is concerned with the decomposition theorem for the polyatomic Boltzmann model.  

\begin{theorem}[Main result III: Decomposition theorem]\label{Th decomp} Let $\alpha>0$, $\delta=\max\{ \left(\frac{1}{2} - \alpha\right)^+,0\}$,  $\g \in [0,2]$, and $b\in L^2$  in the sense of \eqref{b integrablity}. The solution of the Boltzmann equation can be decomposed into two non-negative parts:
	\begin{equation}\label{f decomposed}
		\begin{split}
			f(t,v,I)  = \fR(t,v,I) + \fS(t,v,I), \quad \text{for} \ \  t\geq  0.
		\end{split}
	\end{equation}
The rough part $\fR(t,v,I)$ satisfies
\begin{equation}\label{rough est}
	\fR(t,v,I)  \leq e^{-\mA (t-t_0)} f_{t_0}(v,I), 
\end{equation}
where $\mA >0$ is the constant from \eqref{coll freq lower est}.
The smooth part $	\fS(t,v,I)$ satisfies the following: \\
(a) If $f_0 \in L^1_2 \cap L^2$, then
\begin{equation}\label{fS gen}
	\| 	 	\fS \|_{ \dot{H}^1_v } \leq \CveltS, \quad \text{and} \quad		\| 	I^\delta \partial_{I}	\fS \|_{L^2} \leq \CenttS, \quad t \geq t_0>0.
\end{equation} 
(b) If  $f_0 \in L^1_{(2\g+3)^+} \cap L^2_{(\g+2)^+}$, then  
\begin{equation}\label{fS prop}
	\| 	 	\fS \|_{ \dot{H}^1_v } \leq \CvelzS, \quad \text{and} \quad		\| 	I^\delta \partial_{I}	\fS \|_{L^2} \leq \CenzS, \quad t\geq  0.
\end{equation}
In all statements, the  generic constants are  estimated at the end of the proof.
\end{theorem}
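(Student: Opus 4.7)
The plan follows the standard loss/gain decomposition strategy of the monatomic case, adapted to the polyatomic setting. Fix a base time $t_0\geq 0$ (with $t_0>0$ in case (a) and $t_0=0$ in case (b)). Define the rough part $\fR$ as the unique solution of the pure-loss Cauchy problem
\begin{equation*}
\partial_t \fR(t,v,I) = -\nu[f](t,v,I)\,\fR(t,v,I),\qquad \fR(t_0,v,I)=f(t_0,v,I),
\end{equation*}
so that $\fR(t,v,I) = f(t_0,v,I)\exp\!\bigl(-\int_{t_0}^t\nu[f](s,v,I)\,\md s\bigr)$. The lower bound on the collision frequency $\nu[f]\geq \mA\la v,I\ra^{\g}\geq \mA$ from \eqref{coll freq lower est} immediately yields \eqref{rough est}. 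Setting $\fS:=f-\fR$, subtraction and $\fS(t_0)=0$ give the linear equation
\begin{equation*}
\partial_t \fS = Q^+(f,f)-\nu[f]\,\fS,\qquad \fS(t_0,v,I)=0,
\end{equation*}
whose Duhamel representation $\fS(t) = \int_{t_0}^{t} e^{-\int_s^t\nu[f](u)\,\md u}\,Q^+(f,f)(s)\,\md s\geq 0$ secures the non-negativity of both pieces in \eqref{f decomposed}.

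To estimate $\|\fS\|_{\dot{H}^1_v}$, differentiate the equation for $\fS$ in $v_i$, multiply by $\partial_{v_i}\fS$, integrate over $\bRfp$, and sum over $i=1,2,3$ to obtain
\begin{equation*}
\tfrac{1}{2}\tfrac{d}{dt}\|\fS\|_{\dot{H}^1_v}^2 + \int\nu[f]\,|\nabla_v\fS|^2\,\md v\,\md I = \int\nabla_v Q^+(f,f)\cdot\nabla_v\fS\,\md v\,\md I - \int\fS\,\nabla_v\nu[f]\cdot\nabla_v\fS\,\md v\,\md I.
\end{equation*}
For the first right-hand side term, Young's inequality combined with the smoothing bound of Theorem \ref{inc-reg}, Part 1, controls $\|\nabla_v Q^+(f,f)\|_{L^2}$ in terms of $\|b\|_{L^2}\|f\|_{L^2_{(\g+2)^+}}^2$. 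The required $L^2_{(\g+2)^+}$-norm of $f(t)$ is bounded uniformly by Theorem \ref{Th prop Lp}: in case (b) directly by propagation \eqref{Lp propagation}, and in case (a) by first applying the $L^2$-tail generation estimate \eqref{Lp generation t0} at an intermediate time below $t_0$ and then propagating, which is the source of the $t_0$-dependent constants in \eqref{fS gen}. Using the coercivity $\int\nu[f]|\nabla_v\fS|^2\geq \mA\|\fS\|_{\dot{H}^1_v}^2$ and absorbing the cross term $\fS\,\nabla_v\nu[f]\cdot\nabla_v\fS$ via the velocity commutator formula of Proposition \ref{inc-loss-reg-b1} together with the pointwise inequality $\fS\leq f$, one arrives at a linear differential inequality
\begin{equation*}
\tfrac{d}{dt}\|\fS\|_{\dot{H}^1_v}^2 \leq C - \mA\,\|\fS\|_{\dot{H}^1_v}^2,
\end{equation*}
which, together with $\fS(t_0)=0$, yields the bounds in \eqref{fS gen}--\eqref{fS prop} for the velocity part via Gr\"onwall.

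The internal-energy estimate proceeds along identical lines. Differentiating $\partial_t\fS$ in $I$ and testing against $I^{2\delta}\partial_I\fS$ produces the same structural inequality, where the source $I^\delta\partial_I Q^+(f,f)$ is controlled in $L^2$ by Theorem \ref{inc-reg}, Part 2, whose hypotheses $\alpha>0$ and $\delta=\max\{(1/2-\alpha)^+,0\}$ coincide with those made here, and the required moment $\|f\|_{L^2_{(\g+2\delta+1/2)^+}}$ is again furnished by Theorem \ref{Th prop Lp}. The cross term $I^{2\delta}\fS\,\partial_I\nu[f]\,\partial_I\fS$ is handled by the internal-energy commutator of Proposition \ref{inc-loss-reg-b1}. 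Coercivity against $\int\nu[f]\,I^{2\delta}(\partial_I\fS)^2\geq \mA\,\|I^\delta\partial_I\fS\|_{L^2}^2$ closes the Gr\"onwall argument and delivers the bound on $\|I^\delta\partial_I\fS\|_{L^2}$ in \eqref{fS gen}--\eqref{fS prop}.

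The main obstacle is the commutator term $\fS\,\nabla_v\nu[f]$ (and its $I$-analogue): naively it requires a derivative of $f$ that is not yet under control. The resolution is to transfer the derivative off $\nu[f]$ using the commutator identities from Proposition \ref{inc-loss-reg-b1}, which rewrite $\partial_{v_i}\nu[f]$ and $\partial_I\nu[f]$ as new loss-type operators applied to a modified profile, whose $L^2$-norms are controlled solely by propagated $L^1$ and $L^2$ moments of $f$ that are already in hand by Theorem \ref{Th L1} and Theorem \ref{Th prop Lp}. Once this step is carried out, the remaining analysis is a direct Gr\"onwall computation producing the explicit constants $\CveltS,\CenttS,\CvelzS,\CenzS$; the distinction between regimes (a) and (b) is entirely a matter of which version of the moment bounds from Theorem \ref{Th prop Lp} enters as input.
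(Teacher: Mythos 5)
Your decomposition is exactly the paper's: $\fR=f_{t_0}e^{-\int_{t_0}^t\nu[f]\,\md s}$, $\fS=\int_{t_0}^tQ^+(f,f)e^{-\int_s^t\nu[f]\,\md\tau}\md s$, the rough-part bound via $\nu[f]\geq\mA$, and the moment inputs (generation for case (a), propagation for case (b)) are all the same. Where you diverge is the mechanism for bounding the smooth part: you run an energy/Gr\"onwall argument on the equation $\partial_t\fS=Q^+(f,f)-\nu[f]\fS$ with $\fS(t_0)=0$, testing the differentiated equation against $\partial_{v_i}\fS$ (resp.\ $I^{2\delta}\partial_I\fS$), using the coercivity $\int\nu[f]|\nabla_v\fS|^2\geq\mA\|\fS\|^2_{\dot H^1_v}$, Theorem \ref{inc-reg} on the source, and \eqref{est derivative}--\eqref{est derivative energy} together with $0\leq\fS\leq f$ on the cross term. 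The paper instead differentiates the Duhamel integral directly: the term where the derivative hits the exponential is handled by \eqref{est derivative}/\eqref{est derivative energy}, the elementary bound $xe^{-\mA x}\leq e^{-\mA x/2}$, the weight transfer $\la v,I\ra\leq\la v',I'\ra\la v'_*,I'_*\ra$ and the weighted $L^2$ bound \eqref{Lp Q+} on $Q^+$; the term where the derivative hits $Q^+$ is handled by Theorem \ref{inc-reg}; no differential inequality is needed. Both routes use the same lemmas and produce constants of the same form, so your argument is sound in substance. Two caveats: (i) your Gr\"onwall computation presupposes that $t\mapsto\|\fS\|^2_{\dot H^1_v}$ (and $\|I^\delta\partial_I\fS\|^2_{L^2}$) is finite and absolutely continuous, i.e.\ it uses qualitatively the very regularity being established; this can be repaired by approximation, but the paper's direct estimate of the integral representation avoids the issue entirely and is why it is the cleaner route here (in contrast to Theorem \ref{Th regularity}, where an energy argument is unavoidable). (ii) Proposition \ref{inc-loss-reg-b1} does not rewrite $\partial_{v_i}\nu[f]$ as a ``loss-type operator applied to a modified profile''; what you actually need, and what suffices, are the pointwise bounds \eqref{est derivative} and \eqref{est derivative energy}, combined with Cauchy--Schwarz and the propagated $L^2$ moments, exactly as in the paper's treatment of the analogous terms.
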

The proof is given in Section \ref{Sec proof decomp}. We emphasize that this theorem
shows the weak diffusion process of the collisional effects happening in the cut-off models.  Namely, singularities are damped in an infinite time, as shown by \eqref{rough est}. This is in contrast to the non-cutoff models, at least for monatomic gases \cite{non-cut}, in which the singularities disappear instantaneously.  

 \section{Space homogeneous Boltzmann equation in the continuous  setting describing polyatomic gases }

 In this work, binary collisions are the main interaction principle and their description as a building block for defining collision operator is given in the upcoming Subsection \ref{Sec: bin coll}.
 
 \subsection{Binary collisions }\label{Sec: bin coll}
 In this paper, collisions between the two molecules is assumed to conserve  momentum and the total energy. More precisely, 
 if the colliding particles of mass $m>0$ have pre-collisional velocity-internal energy pairs $(v', I')$, $(v'_*, I'_*)$ that change to $(v, I)$ and $(v_*, I_*)$, respectively, then the following conservation laws are assumed to hold
 \begin{equation}\label{coll CL}
 	v' + v'_*  = v + v_*, \qquad \frac{m}{2} |v'|^2 + I' +  \frac{m}{2} |v'_*|^2 + I'_* = \frac{m}{2} |v|^2 + I +  \frac{m}{2} |v_*|^2 + I_*.
 \end{equation}
 The energy law \eqref{coll CL}$_2$ can be rewritten in terms of the relative velocity $u:=v-v_*$,
 \begin{equation}\label{en}
 	E' = E := \frac{m}{4} |u|^2 + I +  I_*.  
 \end{equation}  
 A possible way to express the primed quantities in \eqref{coll CL} in terms of non-primed is to use the so-called Borgnakke-Larsen procedure  \cite{Bor-Larsen, LD-Bourgat}  that introduces a set of parameters, namely the \emph{angular} variable and \emph{energy exchange} variables 
 \begin{equation}\label{Bor-Lar param}
 	\sigma\in\bS, \quad r, R \in [0,1].
 \end{equation}
 Then, 
 \begin{equation}\label{coll rules}
 	\begin{alignedat}{2}
 		v'  &= \V + \sqrt{\frac{ R  \, E}{m}} \sigma,  \qquad
 		I' &&=r (1-R)E, \\
 		v'_{*} &= \V -   \sqrt{\frac{  R \,  E}{m}} \sigma,
 		\qquad I'_* &&= (1-r)(1-R) E. 
 	\end{alignedat}
 \end{equation}

 \subsection{Collision operator}	 Collision transformation \eqref{coll rules} allows to define the collision operator for distribution functions $f:= f(t,v,I)\geq 0$ and $g:= g(t,v,I)\geq0$, and a parameter $\alpha >-1$, as
 \begin{multline}\label{coll operator general}
 	Q(f,g)(v,I) =\istargenintro \left\{ f(v',I')g(v'_*,I'_*) \left(\frac{I I_*}{I' I'_*}\right)^{\alpha}   - f(v,I)g(v_*, I_*) \right\} \\ \times  \mathcal{B}(v,v_*,I,I_*,\param)  \, \dpo  \, \dstar,
 \end{multline}
 where the collision kernel is $ \mathcal{B}(v,v_*,I,I_*,\param) \geq 0$ a.e. satisfies microreversibility assumptions 
 \begin{equation}\label{coll kernel general}
 	\mathcal{B}(v,v_*,I,I_*,\param)  =  \mathcal{B}(v_*,v,I_*,I, \params) =  \mathcal{B}(v',v'_*,I',I'_*,\paramp), 
 \end{equation}
 with the primed quantities defined in   \eqref{coll rules} and  additionally
 \begin{equation}\label{primed param}
 	\sigma'=\frac{u}{\left|u\right|} =: \hat{u} \in \bS, \qquad	r'=\frac{I}{I+I_*} = \frac{I}{E-\frac{m}{4}\left| u \right|^2} \in [0,1], \qquad R'=\frac{m \left|u\right|^2}{4 E} \in [0,1],
 \end{equation} 
 and where 	the function $\dpo$,  closely related to the weight factor $I^\alpha$,   is given by 
 \begin{equation}\label{fun r R}
 	\dpo =	  r^{\alpha}(1-r)^{\alpha} \, (1-R)^{2\alpha+1} \, \sqrt{R}.
 \end{equation} 
 In this paper, we will specify the collision kernel and thus will consider more specific form of the collision operator than \eqref{coll operator general}.
 
 \subsection{Assumption on the collision kernel} On physical grounds \cite{MPC-Dj-T, MPC-Dj-T-O},
 the collision kernel \eqref{coll kernel general}  is assumed to take a factorized form of hard potentials in the collisional total energy \eqref{en}  with an  integrable angular part, i.e.
 \begin{equation}\label{coll kernel assumpt}
 	\mathcal{B}(v,v_*,I,I_*,\param)  =  b(\hat{u}\cdot\sigma) \E^{\g/2}, \quad \g  \in (0,2], 
 \end{equation}
 and  $b(\hat{u}\cdot\sigma) \geq 0$ 
 assumed integrable  on the unit sphere  $\bS$ and, without   loss of generality, supported on $\bSp$,
 \begin{equation*}
 	\bSp = \left\{ \sigma \in \bS: \hat{u} \cdot \sigma \geq 0 \right\},  \  \text{for   fixed} \ \hat{u}:= \frac{u}{|u|},
 \end{equation*}
 that is,  by means of spherical coordinates, 
 \begin{equation}\label{b integrable}
 	\| b \|_{L^1} = \int_{\bSp} b(\hat{u}\cdot\sigma) \, \md \sigma  = 2 \pi \int_{0}^1 b(x) \, \md x < \infty.
 \end{equation}
 \begin{remark} Note that the restriction $\hat{u} \cdot  \sigma \geq 0$ in the definition of the collision operator \eqref{coll operator} holds without the loss of generality for the factorized form of the collision kernel \eqref{coll kernel assumpt} since the present framework is the single-species and thus colliding particles are indistinguishable. More precisely, since the product $f' f'_*$ appearing in the quadratic Boltzmann collision operator is invariant under the change of variables $(\sigma, r) \rightarrow (- \sigma, 1-r)$ and the reminding  terms are symmetric with respect to it,  $b(\hat{u} \cdot  \sigma)$ can be replaced by its symmetrized version $(b(\hat{u} \cdot  \sigma) + b( - \hat{u} \cdot  \sigma)) \mathds{1}_{\hat{u} \cdot  \sigma \geq 0}$, as it was possible for the classical monatomic case. 
 \end{remark}
 The integrability assumption \eqref{b integrable} implies a well-defined constant:
 \begin{equation}\label{kappa}
 	\kappa = 	\| b \|_{L^1}  \| d_\alpha \|_{L^1([0,1]^2)}. 
 \end{equation}
 Thus, assumption \eqref{coll kernel assumpt} allows to rewrite the collision operator \eqref{coll operator general}, 	for $ \alpha>-1$,
 \begin{multline}\label{coll operator}
 	Q(f,g)(v,I) =\istarintro \left\{ f(v',I')g(v'_*,I'_*) \left(\frac{I I_*}{I' I'_*}\right)^{\alpha}   - f(v,I)g(v_*, I_*) \right\} \\ \times b(\hat{u}\cdot\sigma) \, \E^{\g/2} \, \dpo  \, \dstar.
 \end{multline}
 The collision operator  \eqref{coll operator} can be understood as a difference between the gain part $	Q^+(f,g)(v,I)$ and the loss part $	Q^-(f,g)(v,I)$, where the gain operator is given by the integral form
 \begin{multline}\label{coll gain operator}
 	Q^+(f,g)(v,I) =\istarintro  f(v',I')g(v'_*,I'_*) \left(\frac{I I_*}{I' I'_* }\right)^{\alpha}    \\ \times b(\hat{u}\cdot\sigma) \, \E^{\g/2} \, \dpo  \, \dstar
 \end{multline}
 while the loss operator is local in $f$, more precisely,
 \begin{equation}\label{coll loss operator}
 	Q^-(f,g)(v,I) =  f(v,I) \	\nu[g](v,I),
 \end{equation}
 where the collision frequency $\nu[g](v,I)$ is given by
 \begin{equation}\label{coll fr}
 	\nu[g](v,I) = \istarintro g(v_*, I_*) \, b(\hat{u}\cdot\sigma) \, \E^{\g/2} \, \dpo  \, \dstar.
 \end{equation}
 
 \subsubsection*{The weak form of the collision operator}
 For any suitable test function $\chi(v,I)$, the pre-post transformation \eqref{coll rules}--\eqref{primed param} allows to make the primed quantities passing to the test function, i.e.
 \begin{multline}\label{weak form pre-post}
 	\int_{\bRfp} Q(f,g)(v, I) \, \chi(v,I) \, \md v\, \md I  
 	\\	=  \istarintro  \chi(v', I')   \, f(v, I) \, g(v_*, I_*) \,
 	b(\hat{u}\cdot\sigma) \, \E^{\g/2}  \dpo \, \dall,
 \end{multline}
 which enables the following weak form of the collision operator \eqref{coll operator} 
 \begin{multline}\label{weak form}
 	\int_{\bRfp} Q(f,g)(v, I) \, \chi(v,I) \, \md v\, \md I  
 	=  \iallintro \left\{  \chi(v', I')  - \chi(v, I) \right\} \, f(v, I) \, g(v_*, I_*) 
 	\\ \times b(\hat{u}\cdot\sigma) \, \E^{\g/2} \, \dpo \, \dall.
 \end{multline}
 Details can be found in \cite{DesMonSalv, MC-Alonso-Gamba}.

\section{Commutator formulas}

\begin{lemma}[Lower bound on the collision frequency] \label{Prop: coll freq} Assume $g \in L^1_{2} \cap L^2$ and $\g \in(0,2]$. Then, there exists an explicit  constant ${c}^{lb}[g]  > 0$   such that the following lower bound on the collision frequency  $	\nu[g](v,I) $ given in \eqref{coll fr}  holds
	\begin{equation}\label{coll freq lower est}
		\text{if} \ \g >0 \quad \text{then} \quad \nu[g](v,I) \,   \geq \mAg \, \la v, I \ra^{\g}, \ \text{with} \  \mAg =\kappa \, c^{lb}[g], %\ \text{while if } \ \g=0, \text{then} \  \nu[g](v,I) = \kappa   \| g \|_{L^1}.
	\end{equation} 
	and if $\g=0$, then  $ \nu[g](v,I) =  \kappa  \| g \|_{L^1}$, 
	where $\kappa$ is given in \eqref{kappa}.   
\end{lemma}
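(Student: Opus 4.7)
The plan is to reduce the collision frequency to a standard Carleman-type lower bound by first separating variables that do not interact with $g$. Observe that in the definition \eqref{coll fr}, neither $g(v_*,I_*)$ nor $E=\frac{m}{4}|v-v_*|^2+I+I_*$ depends on $(\sigma,r,R)$, so those integrations factor out and produce the constant $\kappa = \|b\|_{L^1}\|d_\alpha\|_{L^1([0,1]^2)}$, leaving
\[
\nu[g](v,I) \;=\; \kappa\,\is \, g(v_*,I_*)\,E^{\gamma/2}\,\md v_*\,\md I_*.
\]
The case $\g=0$ is immediate since $E^0=1$ and the integral reduces to $\|g\|_{L^1}$.

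For $\g\in(0,2]$ I would exploit the two pointwise bounds $E\ge \tfrac{m}{4}|v-v_*|^2$ and $E\ge I$, which, since $x\mapsto x^{\g/2}$ is monotone, give
\[
E^{\g/2}\;\ge\;\tfrac{1}{2}\Bigl[\bigl(\tfrac{m}{4}\bigr)^{\g/2}|v-v_*|^\g+I^{\g/2}\Bigr].
\]
The $I^{\g/2}$ piece integrates trivially against $g$ and yields $\tfrac{1}{2}I^{\g/2}\|g\|_{L^1}$. The remaining work is to establish a pointwise bound of the form
\[
\is g(v_*,I_*)\,|v-v_*|^{\g}\,\md v_*\,\md I_* \;\ge\; c^\star\bigl(1+|v|^{\g}\bigr),
\]
for a constant $c^\star>0$ depending on $\|g\|_{L^1}$, $\|g\|_{L^1_2}$, and $\|g\|_{L^2}$.

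To prove the lower bound, I would follow the standard Carleman/non-concentration strategy. Using $g\in L^1_2$ and Chebyshev, pick $R_0$ so that the ball $\{|v_*|\le R_0,\,I_*\le R_0\}$ carries at least half of the $L^1$ mass of $g$. For $|v|\ge 2R_0$, on this ball $|v-v_*|\ge |v|/2$, producing the bound $(|v|/2)^{\g}\cdot\|g\|_{L^1}/2$. The delicate case is bounded $|v|\le 2R_0$: there one cannot use $|v-v_*|\ge |v|/2$, and this is precisely where the $L^2$ hypothesis enters. Estimating
\[
\is \mathds{1}_{|v_*-v|\le r,\,I_*\le R_1}\,g(v_*,I_*)\,\md v_*\,\md I_* \;\le\; (|B_r|\,R_1)^{1/2}\,\|g\|_{L^2}
\]
by Cauchy--Schwarz and controlling the tail in $I_*$ by Chebyshev (using $\|g\|_{L^1_2}$), one can first choose $R_1$ and then $r$ small enough so that $\int_{|v_*-v|>r} g\ge \|g\|_{L^1}/2$; this yields $\int g\,|v-v_*|^\g \ge r^\g\|g\|_{L^1}/2$, a positive constant independent of $v$ in the bounded range.

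Finally, combining the two contributions and invoking subadditivity $(1+a+b)^{\g/2}\le 1+a^{\g/2}+b^{\g/2}$ (valid since $\g/2\le 1$) to bound $\la v,I\ra^{\g}=(1+\tfrac{1}{2}|v|^2+\tfrac{1}{m}I)^{\g/2}$ by a multiple of $1+|v|^{\g}+I^{\g/2}$ yields $\nu[g](v,I)\ge \kappa\,c^{lb}[g]\,\la v,I\ra^{\g}$ with an explicit $c^{lb}[g]>0$. The main obstacle is the small-$|v|$ regime, where the $L^2$ control on $g$ is essential to rule out concentration of mass exactly at the point $v$; once that is handled, the rest is bookkeeping of the constants in $R_0$, $R_1$, $r$ in terms of the norms of $g$ that enter the definition of $c^{lb}[g]$.
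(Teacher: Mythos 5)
Your argument is correct. Note that the paper does not actually prove this lemma: it only cites Lemma 3.12 of \cite{MC-Alonso-Pesaro} (equivalently, Lemma 7.1 of \cite{MC-Alonso-Lp}), remarking that the entropy hypothesis used there is replaced here by the more restrictive $L^2$ bound. Your proposal is precisely a self-contained realization of that remark: the factorization of the $(\sigma,r,R)$ integrations producing $\kappa$, the splitting $E^{\g/2}\geq\tfrac12\bigl[(\tfrac m4)^{\g/2}|v-v_*|^{\g}+I^{\g/2}\bigr]$, the large-$|v|$ regime via Chebyshev with $\|g\|_{L^1_2}$, and the non-concentration estimate in the bounded-$|v|$ regime via Cauchy--Schwarz against $\|g\|_{L^2}$ (where the cited works use entropy or an $L^p$ bound) are exactly the standard ingredients behind the quoted lemmas, and your bookkeeping of $R_0$, $R_1$, $r$ yields an explicit $c^{lb}[g]$ depending on $\|g\|_{L^1}$, $\|g\|_{L^1_2}$, $\|g\|_{L^2}$ as required. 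The only trivial adjustment is that the kernel carries $\E^{\g/2}=(E/m)^{\g/2}$ rather than $E^{\g/2}$, which merely changes the constant by a factor $m^{-\g/2}$; also observe that your small-ball bound $r^{\g}\|g\|_{L^1}/2$ is in fact uniform in $v$, which is what guarantees the constant term $1$ in $\la v,I\ra^{\g}$ is covered.
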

This lemma is a version of Lemma 3.12 from \cite{MC-Alonso-Pesaro} or Lower Bound Lemma 7.1 from \cite{MC-Alonso-Lp}. The condition on the entropy is changed to the  more restrictive $L^2$ condition, which is   natural   in the present setting.  For a more detailed discussion, the interested reader is  referred  to    Lemma 3.11 from \cite{MC-Alonso-Pesaro}.

\begin{proposition}[Commutator for the loss operator]\label{inc-loss-reg-b1}
	For $\gamma\in[0,2]$, $\alpha> -1$, $b\in L^{1}(\bSp)$, let $f$ and $g$ be suitable functions  that make all expressions well defined.  Then the derivative with respect to the velocity variable $v$ and   internal energy variable $I$ of the loss part \eqref{coll loss operator}  is written as a sum of the primary term  containing the derivative of the input function $f$ and the reminder,
	\begin{equation}\label{der Q- wrt vi}
		\begin{split}
		\partial_{v_i} 	Q^-(f,g) = \nu[g] \, \partial_{v_i} f +f  \, \partial_{v_i} \nu[g], %=: \Prim^-_{v_i} + \R^-_{v_i},
	\quad \text{and} \quad
			\partial_{I} 	Q^-(f,g) = \nu[g] \, \partial_{I} f +f  \, \partial_{I} \nu[g],
		\end{split}
	\end{equation}
where the primary term is estimated from below by means of Lemma \ref{Prop: coll freq}, and the reminder term is estimated  from above by the following expressions, when $\g>0$,  with notation 
\eqref{kappa} and \eqref{constant Ca},
	\begin{align}
& |\partial_{v_i} \nu[g](v,I)  | \leq \Clvel \, 	\| g \|_{L^{2}_{(\g+3/2)^+}}  \la v, I \ra^{(\g+3/2)^+},  \quad \Clvel :=  \frac{\g \, \kappa}{2}  \, \cb_{(\g -1)/2}, \label{est derivative} \\
 & |\partial_{I} \nu[g](v,I)  | \leq \Clen \,	\| g \|_{L^{2}_{(\g+1/2)^+}}  \la v, I \ra^{(\g+1/2)^+}, \quad \Clen :=  \frac{\g \, \kappa}{2}  \, \cb_{\g/2 -1}, \label{est derivative energy}
	\end{align}
and  $\partial_{v_i} \nu[g] = \partial_{I}\nu[g] =0$ when $\g=0$.
\end{proposition}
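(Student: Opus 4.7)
The plan is to exploit the explicit product structure of the loss operator. First I would simplify the collision frequency by integrating out the parameters $(\sigma, r, R)$, which enter only through $b(\hat{u}\cdot\sigma)$ and $d_\alpha(r,R)$ independently of the remaining variables; this yields
\[
\nu[g](v,I) = \kappa \int_{\bRfp} g(v_*, I_*) \left(\frac{E}{m}\right)^{\g/2} \md v_* \, \md I_*,
\]
with $\kappa$ from \eqref{kappa} and $E$ from \eqref{en}. Since $Q^-(f,g) = f \cdot \nu[g]$ by \eqref{coll loss operator}, the two identities in \eqref{der Q- wrt vi} follow immediately from the Leibniz product rule. When $\g=0$ the integrand above is $(v,I)$-independent, so $\partial_{v_i}\nu[g] = \partial_I \nu[g] = 0$, giving the stated trivial case.

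Next I would differentiate $\nu[g]$ under the integral sign. Using $\partial_{v_i} E = \frac{m}{2}(v_i - v_{*i})$ and $\partial_I E = 1$, the chain rule produces
\[
\partial_{v_i}\nu[g](v,I) = \frac{\g \kappa}{4} \int_{\bRfp} g(v_*, I_*)\left(\frac{E}{m}\right)^{\g/2 - 1}(v_i - v_{*i}) \, \md v_* \, \md I_*,
\]
\[
\partial_{I}\nu[g](v,I) = \frac{\g \kappa}{2m} \int_{\bRfp} g(v_*, I_*)\left(\frac{E}{m}\right)^{\g/2 - 1} \, \md v_* \, \md I_*.
\]
For the velocity case, the elementary bound $|v_i - v_{*i}| \leq |v-v_*| \leq 2\sqrt{E/m}$ lowers the effective exponent from $\g/2 - 1$ to $(\g-1)/2$ at the cost of a factor $2$, which is precisely the mismatch accounting for the different weights $(\g+3/2)^+$ versus $(\g+1/2)^+$ appearing in the final statement.

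The last step is to bound integrals of the form $\int g(v_*, I_*)(E/m)^{s} \md v_* \md I_*$ for $s \in \{(\g-1)/2, \g/2-1\}$. I would apply Cauchy-Schwarz in $(v_*, I_*)$ together with a Peetre-type pointwise estimate of the form $(E/m)^{s} \lesssim \langle v, I\rangle^{(2s)^+}\langle v_*, I_*\rangle^{(2s)^+}$, absorbing one factor into $g$ and compensating the remainder by a weight loss of $(3/2)^+$ so that $\langle v_*, I_*\rangle^{-k}$ becomes $L^2$-integrable over the four-dimensional $(v_*, I_*)$-space. This gives exactly the constants $\cb_{(\g-1)/2}$ and $\cb_{\g/2 -1}$ in the prefactors \eqref{est derivative}--\eqref{est derivative energy}.

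The main technical point to verify is the legitimacy of differentiating under the integral together with the integrability of the singularity of $(E/m)^{\g/2 - 1}$ at $E = 0$ when $\g < 2$. Both issues reduce to local integrability of $|v-v_*|^{\g - 2}$ in $\bR$ (admissible since $\g - 2 > -3$) combined with $g \in L^2$ via Cauchy-Schwarz, which supplies a locally integrable dominator and legitimizes the interchange by dominated convergence.
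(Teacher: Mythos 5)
Your opening steps coincide with the paper's proof: integrating out $(\sigma,r,R)$ to produce the factor $\kappa$ from \eqref{kappa}, the Leibniz rule giving \eqref{der Q- wrt vi}, differentiation under the integral in \eqref{coll fr} using $\partial_{v_i}E=\tfrac m2(v_i-v_{*i})$ and $\partial_I E=1$, the bound $|v_i-v_{*i}|\le 2\sqrt{E/m}$ (from $E\ge\tfrac m4|u|^2$) lowering the exponent to $(\g-1)/2$, and the trivial case $\g=0$. The paper then concludes in one stroke with the weighted Cauchy--Schwarz estimate \eqref{constant Ca estimate}--\eqref{constant Ca}, and it is exactly this last step that your version mishandles.

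The gap is in the claimed pointwise bound $\E^{s}\lesssim\la v,I\ra^{(2s)^+}\la v_*,I_*\ra^{(2s)^+}$: it is false for the negative exponents that actually occur, namely $s=\g/2-1<0$ for $\g<2$ and $s=(\g-1)/2<0$ for $\g<1$, since as $E\to0$ (e.g.\ $v_*\to v$, $I,I_*\to0$) the left side blows up while the right side stays bounded. The paper's mechanism is different: the factor $\E^{a}$ is kept inside the $L^2(\md v_*\,\md I_*)$ norm, $\int g\,\E^{a}\le\|g\|_{L^2_s}\,\big\|\E^{a}\la v_*,I_*\ra^{-s}\big\|_{L^2(\md v_*\md I_*)}$, and finiteness of $\cb_a$ in \eqref{constant Ca} rests on the joint four-dimensional integrability of $\E^{2a}$ near $E=0$, i.e.\ the condition $a>-5/4$, which holds for both $a=(\g-1)/2$ and $a=\g/2-1$ precisely because the $I_*$-integration softens the velocity singularity. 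Your fallback paragraph does not repair this: Cauchy--Schwarz against $g\in L^2$ squares the kernel, and $|v-v_*|^{2(\g-2)}$ is locally integrable in $\mathbb{R}^3$ only for $\g>1/2$, so the reduction to local integrability of $|v-v_*|^{\g-2}$ alone does not cover $\g\in(0,1/2]$. In addition, your weight bookkeeping is off: to make $\la v_*,I_*\ra^{-k}$ square integrable over $\mathbb{R}^3\times\mathbb{R}_+$ one needs $k>5/2$, not $(3/2)^+$, and it is exactly this loss of $5/2$ over the exponent $2a$ that produces the weights $(\g+3/2)^+$ and $(\g+1/2)^+$ appearing in \eqref{est derivative}--\eqref{est derivative energy}; with a loss of only $(3/2)^+$ the claimed constants would be infinite.
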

\begin{proof} Taking derivative of the collision frequency $\nu$ given in  \eqref{coll fr} with respect to $v_i$,
\begin{equation}
\partial_{v_i} \left(	\nu[g](v,I) \right) = \frac{\g \, \kappa}{4}    \int_{\bRfp}   g(v_*, I_*) \,   \E^{\g/2-1}  (v_i - v_{*i})  \, \md v_* \, \md I_*,
\end{equation}
and estimating  $\frac{1}{2}|v_i-v_{*i}|\leq \frac{1}{2}|v-v_{*}|\leq (E/m)^{1/2}$, implies
\begin{equation}
|	\partial_{v_i} \left(	\nu[g](v,I) \right) | \leq \frac{\g \, \kappa }{2}   \int_{\bRfp}   g(v_*, I_*) \,   \E^{(\g-1)/2}    \, \md v_* \, \md I_*.
\end{equation}
The Cauchy-Schwartz inequality implemented as in \eqref{constant Ca estimate}  implies  the statement \eqref{est derivative}.

Similarly for the derivative   with respect to $I$,
\begin{equation}
	\partial_{I} \left(	\nu[g](v,I) \right) = \frac{\g \, \kappa}{2}     \int_{\bRfp}   g(v_*, I_*) \,   \E^{\g/2-1}    \, \md v_* \, \md I_*,
\end{equation}
\eqref{constant Ca estimate}  implies  the statement \eqref{est derivative}.
\end{proof}

 We build the following commutator to treat the regularity property of the solution to the Boltzmann equation as stated in Theorem \ref{Th regularity}, for the critical case $b \in L^1$. If $b\in L^2$, then the following proposition can be omitted and a direct application of Theorem \ref{inc-reg} will suffice to prove Theorem \ref{Th regularity}. 

\begin{proposition}[Velocity commutator for the gain operator]\label{inc-reg-b1}
	For $\gamma\in[0,2]$, $\alpha > 0$, $b\in L^{1}(\bSp)$, let $f$ and $g$ be suitable functions  that make all expressions well defined.  Then the derivative with respect to velocity variable $v$ of the gain part is written as a sum of the primary term $\Prim_i$ containing the derivative of the input function $f$ and the reminder $\R_i$,
	\begin{equation}\label{der Q+ wrt vi}
\partial_{v_i} 	Q^+(f,g)(v,I) = \Prim_i + \R_i,
	\end{equation}
where the primary term is pointwise  bounded as follows,
	\begin{equation}\label{th eq prim vel}
		|\Prim_i| \leq  2   \, Q^+\left( \left| \nabla_{v}  f \right|,  \left| g \right|  \right)(v,I), \quad \text{for all} \ i=1,2,3,
	\end{equation}
while the reminder term can be bounded in $L^2(\bRfp)$ as
			\begin{equation}\label{th eq rem vel}
\left\| 	\R_i  \right\|_{L^2}	 \leq 
	\Ccmvel  \| b \|_{{L^{1}}}   	\| f \|_{L^{2}_{(\g+3/2)^+}}\| g \|_{L^{2}_{(\g+3/2)^+}}, \quad \text{for all} \ i=1,2,3,
	\end{equation}
	with  the constant $\Ccmvel>0$ given at the end of the proof.
\end{proposition}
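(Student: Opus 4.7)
My plan is to differentiate $Q^+(f,g)(v,I)$ directly under the integral sign, split the resulting expression into a primary part --- where the derivative has been transferred onto the input functions via the chain rule applied to $v'$ and $v'_*$ --- and a remainder collecting every other contribution, and then estimate the remainder in $L^2$. Since $b$ is only integrable, I will first smooth it by an approximation $b_\varepsilon\in C^\infty$ converging to $b$ in $L^1$, perform all formal calculations on $Q^+_{b_\varepsilon}$, and pass to the limit once the bounds are uniform in $\varepsilon$. The eventual derivative of $b_\varepsilon$ will be neutralized by a spherical integration by parts in $\sigma$.

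Using the Borgnakke--Larsen formulas \eqref{coll rules} and the energy identity \eqref{en}, the chain rule yields
\[
\partial_{v_i}v'_j = \tfrac{1}{2}\delta_{ij} + \tfrac{\sqrt{mR}\,u_i\sigma_j}{4\sqrt{E}}, \qquad \partial_{v_i}I' = \tfrac{m u_i}{2}\,r(1-R),
\]
with analogous formulas for $v'_{*j},I'_*$ via $\sigma\mapsto-\sigma$ and $r\mapsto 1-r$. Since $|u|/\sqrt{E}\leq 2/\sqrt m$ and $R\leq 1$, every component of $\partial_{v_i}v'$ and $\partial_{v_i}v'_*$ is bounded by $1$ in absolute value. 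I then define the primary term $\Prim_i$ to be the contribution produced by the velocity-gradient pieces $(\nabla_v f)(v',I')\cdot\partial_{v_i}v'$ and $(\nabla_v g)(v'_*,I'_*)\cdot\partial_{v_i}v'_*$ of the chain rule, paired with the remaining factors inside the collision integral. The pointwise bounds $|\partial_{v_i}v'|,|\partial_{v_i}v'_*|\leq 1$ immediately give $|\Prim_i|\leq Q^+(|\nabla_v f|,|g|)+Q^+(|f|,|\nabla_v g|)$, and the single-species symmetry $Q^+(f,g)=Q^+(g,f)$ (from the swap $(\sigma,r)\mapsto(-\sigma,1-r)$, as in the remark after \eqref{b integrable}) upgrades this to \eqref{th eq prim vel}.

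The remainder $\R_i$ then collects: (a) the internal-energy terms $(\partial_I f)(v',I')\partial_{v_i}I'$ and $(\partial_I g)(v'_*,I'_*)\partial_{v_i}I'_*$; (b) the derivative of the weight $(II_*/(I'I'_*))^\alpha$ in $v$; (c) the derivative of $(E/m)^{\gamma/2}$, producing a factor of order $(E/m)^{(\gamma-1)/2}|u|$; and (d) the derivative of $b_\varepsilon(\hat u\cdot\sigma)$. For (a), I would integrate by parts in $r$, using $I'=r(1-R)E$ to rewrite $(\partial_I f)(v',I')$ as $\partial_r[f(v',I')]/((1-R)E)$ and transfer the derivative onto $\dpo$, which is integrable on $[0,1]^2$ thanks to $\alpha>0$. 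Items (b) and (c) are handled by direct computation, the resulting $(E/m)^{\gamma/2-1}|u|$-type factors combining with the polynomial weights on $f$ and $g$. For (d), writing $\partial_{v_i}(\hat u\cdot\sigma)=|u|^{-1}(\sigma_i-(\hat u\cdot\sigma)\hat u_i)$ and noting that $b_\varepsilon'(\hat u\cdot\sigma)\hat u=\nabla_\sigma[b_\varepsilon(\hat u\cdot\sigma)]$ (surface gradient on $\bS$), a spherical integration by parts on $\bSp$ moves the $\sigma$-derivative off $b_\varepsilon$ and onto the smooth rest of the integrand. The boundary term on the equator $\{\hat u\cdot\sigma=0\}$ vanishes thanks to the single-species symmetrization of $b$, and passing $\varepsilon\to0$ then yields a contribution controlled by $\|b\|_{L^1}$ alone.

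Once every derivative has been moved onto a smooth factor, each remainder term takes the form of a gain-type bilinear integral whose kernel is pointwise bounded by $\la v,I\ra^{(\gamma+3/2)^+}$: the $\gamma/2$ from the hard potential, an additional $1$ from the extra $|u|$ produced by the differentiation, a further $1/2$ (absorbed via Cauchy--Schwarz) from the $|u|^{-1}$ singularity left over from the spherical integration by parts, plus the customary $\varepsilon$-margin. A Young-type convolutional inequality, applied after averaging over $(r,R)\in[0,1]^2$, then gives \eqref{th eq rem vel} with an explicit constant $\Ccmvel$ depending on $\alpha,\gamma,m$ and $\|\dpo\|_{L^1([0,1]^2)}$. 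I expect the main obstacle to be the spherical integration by parts in (d): both the vanishing of the equatorial boundary term and the absorption of the resulting $|u|^{-1}$ singularity into the hard-potential factor $(E/m)^{\gamma/2}$ via Cauchy--Schwarz are delicate --- and it is precisely this step that forces the weight order $(\gamma+3/2)^+$ appearing in the statement.
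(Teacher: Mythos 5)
There is a genuine gap: differentiating the strong form \eqref{coll gain operator} directly cannot produce the statement as written, and the two devices you propose to repair it do not work. First, in the original variables the $v$-dependence of the integrand sits symmetrically in $f(v',I')\,g(v'_*,I'_*)$ \emph{and} in the internal-energy slots $I'=r(1-R)E$, $I'_*=(1-r)(1-R)E$; your chain rule therefore yields, besides $\nabla_v f$, also $\nabla_v g$, $\partial_I f$ and $\partial_I g$ contributions. The best pointwise bound your primary term admits is $Q^+(|\nabla_v f|,|g|)+Q^+(|\nabla_v g|,|f|)$: the single-species symmetry $Q^+(f,g)=Q^+(g,f)$ only swaps slots, it does not convert $\nabla_v g$ into $\nabla_v f$, so \eqref{th eq prim vel} is not obtained for $f\neq g$, and the remainder \eqref{th eq rem vel} must be free of derivatives of $f,g$, which your items (a) and (d) violate. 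Second, the $r$-integration by parts you invoke to remove $(\partial_I f)(v',I')$ fails structurally: in \eqref{coll gain operator} the factor $r^\alpha(1-r)^\alpha$ of $\dpo$ is exactly cancelled by $(I'I'_*)^{-\alpha}=(r(1-r))^{-\alpha}((1-R)E)^{-2\alpha}$, so the $r$-integrand carries no vanishing weight at $r=0,1$; the boundary terms are traces $f(v',0)$, $g(v'_*,0)$, which are not controlled by weighted $L^2$ norms, and the transferred $\partial_r$ hits $g(v'_*,I'_*)$ and recreates $\partial_I g$. Third, the spherical integration by parts for $b_\varepsilon(\hat u\cdot\sigma)$ moves the tangential $\sigma$-derivative onto $f(v',I')g(v'_*,I'_*)$ through $v'=\frac{v+v_*}{2}+\sqrt{RE/m}\,\sigma$ and $v'_*$, producing derivative-of-$f$ and derivative-of-$g$ terms multiplied by the leftover singular factor $|u|^{-1}\sqrt{RE/m}$; these fit neither the primary term (they carry unbounded factors, so no bound by $2\,Q^+(|\nabla_v f|,|g|)$) nor a derivative-free remainder. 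Moreover the equatorial boundary term has no reason to vanish: the symmetrized kernel $(b(x)+b(-x))\mathds{1}_{x\geq0}$ need not vanish at $\hat u\cdot\sigma=0$.

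The paper's proof avoids all three obstacles at once by a change of variables rather than by mollification: after $(v_*,I_*)\mapsto(v'_*,I'_*)$, then $R\mapsto\tilde R$ as in \eqref{bL1 R tilde change} and polar coordinates for $\sigma$ about $\hat w$ with $w=v-v'_*$, the argument of $b$ becomes the function of $(\cos\tilde\theta,\tilde R)$ in \eqref{restr theta tilde} (so $b$ is never differentiated and only $\|b\|_{L^1}$ is used), $g$ is evaluated at integration variables, and $I'=\frac{r}{1-r}I'_*$ is independent of $v$. Consequently the $v$-derivative hits only $f(v',I')$ (giving the primary term, with the factor $2$ coming from $|\partial_{v_i}(|w|\sigma)|\leq1$ and $2\sqrt{\tilde R}\leq2$) and the explicit kernel factor $\hag$ (giving a derivative-free remainder), which is then estimated in weak form through the averaging operator of Lemma \ref{S estimate L1 gen}. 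If you want a proof along your lines, you would have to first re-parametrize the gain operator so that the angular argument and the energy slots of the distribution functions no longer depend on $v$; that re-parametrization is precisely the content of Step~1 of the paper's argument.
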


\begin{proof} The proof follows the three main steps:
	
\emph{Step 1: Kernel form of the gain operator and differentiation.} The gain term as defined in \eqref{coll gain operator} depends on the velocity variable through, among other terms, the angular part of the kernel $b(\hat{u}\cdot\sigma)$. Since this term is assumed to be in $ L^{1}(\bSp)$, to avoid differentiation with respect to the velocity variable, it is essential to write the strong form of the gain operator in a kernel form. Differentiation naturally leads to the two terms: the primary term containing the derivative of the input function $f$ and a reminder.
   
\emph{Step 2: Point-wise estimate on the primary term.} 

\emph{Step 3: Estimate on the reminder in $L^2$.} 
  
\subsubsection*{Step 1}	With the aim   to write the gain operator  $Q^+$  from \eqref{coll gain operator} in a kernel form,  perform the change of variables $(v_*,I_*) \mapsto (v'_*,I'_*)$, for the fixed variables $(\param)$. The Jacobian of this transformation can be computed using  similar techniques to those in \cite{Brull-Comp-2} or in Lemma 5.1 from \cite{MC-Alonso-Lp}, and is given by
\begin{equation*}
\left| \frac{\partial (v'_*,I'_*)}{\partial (v_*, I_*)} \right| = \frac{(1-r)(1-R)}{2^3}.
\end{equation*}
Now express all  the variables appearing in \eqref{coll gain operator} in terms of the new variables and   specify a new domain of integration. To that aim,  the collision transformation \eqref{coll rules} is used. Firstly,
\begin{equation*}
	E= \frac{I'_*}{(1-r)(1-R)}, \quad I'=\frac{r}{(1-r)}I'_*, \quad v' = v'_* + 2 \sqrt{\frac{R \, I'_*}{m(1-r)(1-R)}} \ \sigma.
\end{equation*}
Now,   define $w=v-v'_*$, and relate relative velocity $u$ to $w$ via the rule for $v'_*$, namely, 
\begin{equation}\label{u->w}
w=v-v'_*, \quad u=2\left(w -\sqrt{\frac{RE}{m}} \sigma\right) \quad \Rightarrow \quad \hat{u}\cdot\sigma = \frac{w\cdot\sigma-\sqrt{\frac{R I'_*}{m(1-r)(1-R)}}}{\sqrt{|w|^2-2 \sqrt{\frac{R  I'_*}{m(1-r)(1-R)}} \, w \cdot\sigma + \frac{R  I'_*}{m(1-r)(1-R)} } },
\end{equation}
and express $I_*$,
\begin{equation}\label{Is as I}
	\begin{split}
		I_* &= E - \frac{m}{4}|u|^2 -I
	=  \frac{I'_*}{(1-r)} - m |w|^2+2 \sqrt{\frac{m R  I'_*}{(1-r)(1-R)}} \ w \cdot\sigma   -I.
	\end{split}
\end{equation}
Then, the original domain $\left\{   (v_*,I_*,\param)\in \bRfp \times \domparam \right\}$ from \eqref{coll gain operator} changes to
\begin{equation}\label{domain prime}
	%	\begin{split}
	\D = \Bigg\{  (v'_*,I'_*,r, {R}, \sigma): \ v'_* \in \mathbb{R}^3,\ I'_*\in\mathbb{R}_+, \ r,  {R} \in[0,1],
 \ \sigma:  w\cdot\sigma \geq \sqrt{\frac{R I'_*}{m(1-r)(1-R)}} \Bigg\},
	%	\\
	%	\tilde{R}: \min\Big\{ \frac{\left(\max \{0,\B \}\right)^2}{\cos^2\tilde{\theta}}, \cos^2\tilde{\theta}\Big\}  \leq \tilde{R} \leq \cos^2\tilde{\theta} 
	%	\Bigg\}, \quad \B = \frac{m|w|^2 +I - \frac{I'_*}{(1-r)}}{2 m |w|^2}.
	%	\end{split}
\end{equation}
and \eqref{coll gain operator} finally becomes
	\begin{multline}\label{coll gain operator new}
	Q^+(f,g)(v,I) =\int_\D  f(v',I')g(v'_*,I'_*) \left(\frac{I }{I' I'_* }\right)^{\alpha}  (\max\{0,I_*\})^\alpha \\ \times b(\hat{u}\cdot\sigma) \, \E^{\g/2} \, \dpo  \, \md \sigma \, \md  {R} \, \md r \, \md I'_* \, \md v'_*.
\end{multline}
Since the argument of the angular kernel $\hat{u}\cdot\sigma $ still depends on the $v$ variable through $w$ and  $\hat{w}\cdot\sigma$ , the   change of variables $R\mapsto \tilde{R}$ is performed  by relation
\begin{equation}\label{bL1 R tilde change}
	\tilde{R} = \frac{R}{\A (1-R)} \in \mathbb{R}_+,  \quad \text{with}
	\quad \A = \frac{(1-r)m|w|^2}{I'_*},
\end{equation} 
fixing all other variables. Then, the   polar coordinates for $\sigma$ are introduced, with zenith $\hat{w}=\tfrac{w}{|w|}$,  
\begin{equation}\label{sigma polar}
	\sigma = \cos \tilde{\theta} \, \hat{w} + \sin\tilde{\theta} \, \sin\varphi \, \omega_1 +  \sin\tilde{\theta} \, \cos\varphi \, \omega_2, 
\end{equation}
where unitary vectors $\hat{w}$, $\omega_1$ and $\omega_2=\hat{w}\times \omega_1$ form an orthonormal basis. Denoting with $\varphi$ the angle describing the one-dimensional sphere in the plane generated by $(\omega_1,\omega_2)$ orthogonal to $\hat{w}$, the Jacobian of polar coordinates change becomes $\md \sigma = 	\md (\cos\tilde{\theta}) \md \varphi$. Now, the argument of the angular part becomes function of $	\tilde{R}$ and $\cos\tilde{\theta}$,  thus independent of $v$, 
\begin{equation}\label{restr theta tilde}
	\cos \tilde{\theta}  := \hat{w}\cdot\sigma, \quad w = v-v'_*, \quad \text{and} \quad  \hat{u}\cdot\sigma = \frac{\cos\tilde{\theta}-\sqrt{\tilde{R}}}{\sqrt{1-2 \sqrt{\tilde{R}} \, \cos\tilde{\theta} + \tilde{R} } },
\end{equation}
which
 motivates to introduce the notation
\begin{equation}\label{b new}
	b(\hat{u}\cdot\sigma) = \tilde{b}(\cos\tilde{\theta},\tilde{R}).
\end{equation}
Then,  the expression appearing in the integrand of the gain term are expressed as follows,
\begin{equation}\label{variables tilde}
	\begin{split}
	& w=v-v'_*, \quad E = m |w|^2 \left(\frac{1}{A} +  \tilde{R}\right), \quad  I'=\frac{r}{(1-r)}I'_*, \quad v' = v'_* +2 |w| \sqrt{\tilde{R}}  \ \sigma, \\
&	I_* = \frac{I'_*}{(1-r)} +m |w|^2 \left( -1 + 2 \sqrt{\tilde{R}} 
	\, \cos\tilde{\theta} \right), \quad  R = \frac{ \A \, \tilde{R}}{1+\A \tilde{R}},
	\end{split}
\end{equation}
with $\A$ from \eqref{bL1 R tilde change} and $\sigma$ represented as \eqref{sigma polar}. 
Finally, the change of variables \eqref{bL1 R tilde change} brings the change of the domain \eqref{domain prime} to a new domain. From \eqref{restr theta tilde},   the restriction $\hat{u}\cdot\sigma\geq 0$ implies $\cos\tilde{\theta}\geq \sqrt{\tilde{R}}$, which in turn restricts the domain for $\tilde{R}$ from $\mathbb{R}_+$ in \eqref{bL1 R tilde change} to $[0,1]$.  
Thus, the new domain is
\begin{multline}\label{domain Dt}
%	\begin{split}
	\Dt = \Bigg\{  (v'_*,I'_*,r,\tilde{R}, \varphi, \cos\tilde{\theta}): \ v'_* \in \mathbb{R}^3,\ I'_*\in\mathbb{R}_+, \ r, \tilde{R} \in[0,1],
	 \\ (\cos\tilde{\theta},\varphi) :   \varphi \in [0, 2\pi), \, \sqrt{\tilde{R}}\leq \cos\tilde{\theta}\leq 1 \Bigg\},
%	\\
%	\tilde{R}: \min\Big\{ \frac{\left(\max \{0,\B \}\right)^2}{\cos^2\tilde{\theta}}, \cos^2\tilde{\theta}\Big\}  \leq \tilde{R} \leq \cos^2\tilde{\theta} 
%	\Bigg\}, \quad \B = \frac{m|w|^2 +I - \frac{I'_*}{(1-r)}}{2 m |w|^2}.
%	\end{split}
\end{multline}
%Note that the domain $\Dt$ depends on $(v,I)$ through the range for $\tilde{R}$, i.e. through $\B$. Thus, 
and the gain operator becomes
	\begin{multline}\label{coll gain operator tilde}
	Q^+(f,g)(v,I) = 2^3 \int_{	\Dt}  f(v',I') \, g(v'_*,I'_*) \left(\frac{I }{I' I'_* }\right)^{\alpha}  \tilde{b}(\cos\tilde{\theta},\tilde{R})    \frac{I'^{\,\g/2-1}_*}{(m(1-r))^{\g/2}} \\ \times 	\hag(I_*, w, r, R) \, 
	\md (\cos\tilde{\theta}) \, \md \varphi \, \md \tilde{R} \, \md r \, \md I'_* \, \md v'_*,
\end{multline}
where 
\begin{equation}\label{h}
	\hag(I_*, w, r, R) =( \max\{0, I_*\})^\alpha \, \dpo (1-R)^{1-\g/2} m |w|^2,
\end{equation}
with $I_*$, $w$, and $R$ as defined in \eqref{variables tilde}.

Thus, the velocity dependence is on $f$ through $v'$ and  on $	\hag$, so taking the derivative of $Q^+$ from \eqref{coll gain operator tilde} with respect to $v_i$ leads naturally to two terms as in \eqref{der Q+ wrt vi}. We consider them separately.

\subsubsection*{Step 2}  The primary term $\Prim_i$ is, for each $i=1,2,3$, given by
	\begin{multline}\label{coll gain operator tilde 1}
\Prim_i = 2^3 \int_{	\Dt} \left( \partial_{v_i}  f(v',I') \right) \, g(v'_*,I'_*) \left(\frac{I }{I' I'_* }\right)^{\alpha}  \tilde{b}(\cos\tilde{\theta},\tilde{R})    \frac{I'^{\,\g/2-1}_*}{(m(1-r))^{\g/2}} \\ \times 	\hag(I_*, w, r, R) \, 	\md (\cos\tilde{\theta}) \, \md \varphi \, \md \tilde{R} \, \md r \, \md I'_* \, \md v'_*.
\end{multline}
The chain rule and the Cauchy-Schwarz inequality imply
\begin{equation}\label{part f}
 \partial_{v_i}  f(v',I')  = \sum_{j=1}^3  \partial_{v'_j}  f(v',I')  \frac{\partial v'_j}{\partial v_i}  \leq \left| \nabla_{v'} f(v',I') \right| \, \left| \partial_{v_i} v' \right|.  
\end{equation}
Next, to compute derivative of $v'$ with respect to each component  $v_i$, the expression \eqref{variables tilde} is used, with   $\sigma$  expressed in terms of polar coordinate system \eqref{sigma polar} that depends on $v$ through $w$,
\begin{equation}\label{part der v' wrt v}
\frac{\partial v'}{\partial v_i} = 2 \sqrt{\tilde{R}} \, \frac{\partial( 	|w|	\sigma )}{\partial w_i}.
\end{equation}
From \eqref{sigma polar},
\begin{equation}\label{w s j}
|w|	\sigma = \cos \tilde{\theta} \,  {w} + \sin\tilde{\theta} \, \sin\varphi \, |w| \, \hat{\xi} +  \sin\tilde{\theta} \, \cos\varphi \,\left( w \times  \hat{\xi} \right),
\end{equation}
for any $\hat{\xi}$ belonging to the plane orthogonal to $w$. To compute the derivative in \eqref{part der v' wrt v} with respect to $w_i$, we will choose a  specific $\hat{\xi}$ for each $i$. This is allowed because we are estimating each $\Prim_i$ independently.  Namely,  for each $i$, we choose $\hat{\xi}$ to be independent of $w_i$. For instance, if $i=1$, we choose $ \xi= (0,-w_3,w_2)$, or if  $i=2$, then $ \xi = (-w_3, 0, w_1)$, or finally for $i=3$, we choose $\xi=(w_1,-w_2,0)$
and $\hat{\xi} = \frac{ {\xi}}{| {\xi}|}$. 
 In such a way,
\begin{equation*}
	\frac{\partial \left( |w|	\sigma\right)}{\partial w_i} = \cos\tilde{\theta} \left( \partial_{w_i} w \right) + \sin\tilde{\theta} \, \sin\varphi \, \frac{w_i}{|w|} \, \hat{\xi} +  \sin\tilde{\theta} \, \cos\varphi \,\left( \left( \partial_{w_i} w \right)  \times  \hat{\xi} \right),
\end{equation*}
and $ \partial_{w_i} w_j = \delta_{ij}$ for each $j=1,2,3$,  where $\delta_{k\ell}$ is the usual Kronecker delta, i.e. equals $1$ for $k=\ell$ and 0 otherwise. Since $i$-th component of $\hat{\xi}$  is chosen zero, vectors $\left( \partial_{w_i} w \right)$, $\hat{\xi}$ and $\left( \partial_{w_i} w \right)  \times  \hat{\xi} $ form an orthonormal basis, leading to 
\begin{equation*}
	\left| 	\frac{\partial \left( |w|	\sigma\right)}{\partial w_i}  \right|^2 =   \cos^2\tilde{\theta} + \sin^2\tilde{\theta} \, \sin^2\varphi \, \frac{w_i^2}{|w|}   +  \sin^2\tilde{\theta} \, \cos^2\varphi \leq 1, \quad \text{for each} \ i=1,2,3.
\end{equation*}
Together with   \eqref{part der v' wrt v}, for \eqref{part f} the last estimate implies 
\begin{equation*}
	\partial_{v_i}  f(v',I')  \leq 2 \sqrt{\tilde{R}} \, \left| \nabla_{v'} f(v',I') \right|, \qquad i=1,2,3.
\end{equation*}
Domain of integration $\Dt$ implies $ \sqrt{\tilde{R}} \leq 1$, leading to the estimate for $\Prim_i$,
	\begin{equation}\label{T1 bound} 
		\begin{split}
 	\left|\Prim_i\right|  &\leq 2^4\int_{	\Dt} \left| \nabla_{v'}  f(v',I') \right| \, \left|g(v'_*,I'_*)\right| \left(\frac{I }{I' I'_* }\right)^{\alpha}  \tilde{b}(\cos\tilde{\theta},\tilde{R})    \frac{I'^{\,\g/2-1}_*}{(m(1-r))^{\g/2}} \\ & \qquad\qquad\qquad \times   \hag(I_*, w, r, R)  \, 	\md (\cos\tilde{\theta}) \, \md \varphi \, \md \tilde{R} \, \md r \, \md I'_* \, \md v'_*
 	\\
 	&= 2  \, Q^+\left( \left| \nabla_{v}  f \right|,  \left| g \right|  \right)(v,I),
 	\end{split}
\end{equation}
which is exactly the statement \eqref{th eq prim vel}.
\subsubsection*{Step 3} The final goal is to  estimate   the reminder term, defined for $\alpha>0$,
	\begin{multline}\label{coll gain operator tilde 2}
\R_i = 2^3 \int_{	\Dt}  f(v',I') \, g(v'_*,I'_*) \left(\frac{I }{I' I'_* }\right)^{\alpha}  \tilde{b}(\cos\tilde{\theta},\tilde{R})    \frac{I'^{\,\g/2-1}_*}{(m(1-r))^{\g/2}} \\ \times \partial_{v_i} \left(	\hag(I_*, w, r, R) \right) \, \dstartilde.
\end{multline}
Taking derivative of $\hag(I_*, w, r, R)$  from \eqref{h}, with the arguments given in \eqref{variables tilde}, implies
\begin{equation*}
\partial_{v_i}  	\hag  =   \kk_i \,	\hag, \quad \text{for} \  \alpha>0,
\end{equation*}
with
\begin{equation}
	\kk_i = \frac{2 w_i}{|w|^2} \left(  \frac{\alpha}{ I_*} m |w|^2 \left( -1 + 2 \sqrt{\tilde{R}} \cos\tilde{\theta}  \right) + \frac{3}{2} - \left( 2 \alpha +\frac{5}{2} - \frac{\g}{2} \right) R   \right),
\label{add 1}
\end{equation}
where   $I_*$ and $R$   are to be understood as functions of variables of integration as given in \eqref{variables tilde}. 
Now we undo all the change of variables and return to the initial ones appearing in \eqref{coll gain operator}, 
	\begin{equation}\label{coll gain operator 2}
\R_i =\istarintro  f(v',I')\,g(v'_*,I'_*) \left(\frac{I I_*}{I' I'_* }\right)^{\alpha}      b(\hat{u}\cdot\sigma) \, \E^{\g/2} \, \kk_i \, \dpo  \, 	\dstar,
\end{equation}
with $\kk_i$ now given by 
\begin{equation}
\kk_i	= \frac{2 w_i}{|w|^2} \left(  \frac{\alpha}{  I_*} \left(R E - \frac{m}{4}|u|^2\right)  + \frac{3}{2} - \left( 2 \alpha +\frac{5}{2} - \frac{\g}{2} \right) R   \right), \quad w = v-v'_*. \label{add 2}
\end{equation}
 Note that $\R_i$ looks similar to $Q^+$ in \eqref{coll gain operator}, with the addition of $\kk_i$ in the kernel. The function $\kk_i$ has a singularity in $w$ and $I_*$ and thus requires  extra effort. We would need to look for an estimate of its weak form and adapt ideas of $L^p$-estimates from \cite{MC-Alonso-Lp}. First, written in a weak form, for some suitable test function $\chi$, using an  interchange of prime and non-prime variables, 
  	\begin{multline*}
  \int_{ \bRfp }	\R_i \, \chi(v,I) \, \md v \, \md I 
  \\ =\iallintro  f(v,I) \, g(v_*,I_*)   \,   b(\hat{u}\cdot\sigma) \, \E^{\g/2} \, \kk'_i \, \chi(v',I')  \, \dpo  \, 	\dall.
  \end{multline*}
Now, let us estimate $\kk'_i$, firstly only using the collision rules \eqref{coll rules},
\begin{equation*}
	\begin{split}
	|\kk'_i| &\leq  \frac{2 }{|w'|} \left(  \frac{\alpha}{ (1-r)(1-R)E } \left(R E + \frac{m}{4}|u|^2\right)  + \frac{3}{2} + \left( 2 \alpha +\frac{5}{2} + \frac{\g}{2} \right) \frac{m}{4} \frac{|u|^2}{E}   \right)
	\\
& 	\leq  \frac{2 }{|w'|} \left(  \frac{ \alpha }{ (1-r)(1-R)E } \left(R E + \frac{m}{4}|u|^2\right)  + \left( 2 \alpha + 4 + \frac{\g}{2} \right) \right)
	\end{split}
\end{equation*}
and secondly, the domain $\bSp$ for $\sigma$ which implies 
\begin{equation*}
|w'|^2 = \left| \frac{1}{2} u' + \sqrt{\frac{R'E}{m}}\sigma' \right|^2  = \frac{1}{2}  \left|\sqrt{\frac{RE}{m}} \sigma + |u| \hat{u} \right|^2 \geq \frac{2}{m} \left(RE + \frac{m}{4}|u|^2\right),
\end{equation*}
and allows to conclude
\begin{equation*}
	|\kk'_i| \leq  \sqrt{2 m} \left(2\alpha + 4 + \frac{\g}{2}\right)  \left(  \frac{\alpha}{ (1-r)(1-R) }  + \frac{1}{\sqrt{R}}\right) \frac{1}{\sqrt{E}}.
\end{equation*}
H\"older inequality applied to the integral with respect to $(v_*,I_*)$ implies, for some weight $s\geq0$ to be specified later,
	\begin{multline*}
	\int_{ \bRfp }	\R_i \, \chi(v,I) \, \md v \, \md I 
	\\ \leq
	\| g \|_{L^2_s} \ \left\| \ \  \int_{\bRfp}   |f(v,I)|    \,  \E^{(\g-1)/2} \ \mS_1(\chi)(v,I, v_*, I_*) \,\md v \,\md I \right\|_{L^2_{-s}(\md v_* \md I_*)},
\end{multline*}
where $\mS_1$  is the averaging operator with the constant $\rho_1$,  defined by $\mS^\psi$ and $\rho^\psi$  from   \eqref{S operator gen} and \eqref{rho gen} for $\psi = \left( \frac{1}{(1-r)(1-R)} + \frac{1}{\sqrt{R}}\right)$.
Applying Cauchy-Schwarz  inequality in $(v,I)$,
	\begin{multline*}
	\int\limits_{(v,I)\in \bRfp }	\R_i \, \chi(v,I) \, \md v \, \md I 
	\\ \leq
	\| f \|_{L^2_s}	\| g \|_{L^2_s}   \sup_{(v_*,I_*)} \left\| \mS_1(\chi)  \right\|_{L^2(\md v  \,\md I)}   \ \sup_{(v,I)}  \Big\| \    \E^{(\g-1)/2} \la v, I \ra^{-s}  \Big\|_{L^2_{-s}(\md v_* \md I_*)}.
\end{multline*}
Appendix Lemma \ref{S estimate L1 gen} and the constant computed in \eqref{constant Ca} imply the final estimate \eqref{th eq rem vel} on the reminder term with the constant
	\begin{equation*}
\Ccmvel=  \sqrt{2 } \left(2\alpha + 4 + \frac{\g}{2}\right)   \rho_1 \, \cb_{(\g -1)/2},
\end{equation*}
with  $\cb_{(\g -1)/2}$ is given in  \eqref{constant Ca} and $\rho_1$ finite for $\alpha>0$.
\end{proof}

\section{Smoothing properties of the gain operator: Proof of Theorem \ref{inc-reg}}

\subsection{Smoothing properties with respect to the velocity variable}\label{Sec: proof smooth v}

\begin{proof}[Proof of Theorem \ref{inc-reg}, Part 1.]
First note that the assumption $f, g \in L^1_{\g}(\bRfp)$ implies $Q^+(f,g) \in L^1(\bRfp)$. Indeed,  the pre-post change of variables,
\begin{multline*}
\left\| Q^+(f,g) \right\|_{L^1(\bRfp)} = 
  \int_{(\bRfp)^2} \int_{  [0,1]^2 \times \bSp}   f(v,I)g(v_*,I_*)    \\ \times b(\hat{u}\cdot\sigma) \, \E^{\g/2} \, \dpo  \, \dall,
\end{multline*}
together with the estimate  on the collision kernel 
\begin{equation}\label{est E}
 \E^{\g/2} \leq \la v, I \ra^\g \la v_*, I_* \ra^\g,
\end{equation}
yield 
\begin{equation*}
\left\| Q^+(f,g) \right\|_{L^1(\bRfp)} \leq \left\| b \right\|_{L^1(\bSp)}  \left\| d_\alpha \right\|_{L^1([0,1]^2)} \left\| f \right\|_{L^1_\g(\bRfp)}   \left\| g \right\|_{L^1_\g(\bRfp)},
\end{equation*}
and therefore  $Q^+(f,g) \in L^1(\bRfp)$. Thus,  we can compute its Fourier transform in both variables $v\in \bR$ and $I\in \bRp$, by using the pre-post change of variables \eqref{weak form pre-post}, for the test function being the Fourier multiplier $\chi(v,I) = e^{-i \xi \cdot v } e^{- i \omega I}$ with $\xi \in \bR$ and $\omega\in \bRr$ being the Fourier variables, 
\begin{multline*}
\reallywidehat{Q^+(f,g)}(\xi,\omega) =  \iall e^{-i \, \xi \cdot \big( \V  + \sqrt{ \frac{RE}{m}} \sigma \big)} e^{-i \, \omega r(1-R)E}   f(v,I)g(v_*,I_*)    \\ \times b(\hat{u}\cdot\sigma) \, \E^{\g/2} \, \dpo  \, \dall.
\end{multline*}
%where we denoted
%\begin{equation*}
%\dpo = \dr(r) \, \dR(R),  \quad \dr(r)=(r(1-r))^\alpha, \quad \dR(R)=(1-R)^{2\alpha+1}\sqrt{R}.
%\end{equation*}
We perform the change of variables 
\begin{equation}\label{Et}
	R \mapsto \tilde{R} = R \Et, \quad  \text{with} \ \Et=\frac{E}{\frac{m}{4}|u|^2} \geq 1.
\end{equation}
Thus, keeping in mind \eqref{fun r R},
\begin{multline}\label{Four-1}
\reallywidehat{Q^+(f,g)}(\xi,\omega) = \iRt e^{-i \, \xi \cdot \big( \V  + \frac{|u|}{2} \sqrt{\tilde{R}}\, \sigma \big)} e^{-i \, \omega r\big(1-\tfrac{\tilde{R}}{\Et}\big)E}   f(v,I)g(v_*,I_*)    \\ \times b(\hat{u}\cdot\sigma) \, \E^{\g/2} \, r^\alpha(1-r)^\alpha \left(1-\tfrac{\tilde{R}}{\Et}\right)^{2\alpha+1} \frac{\sqrt{\tilde{R}}}{\Et^{3/2}} \, \mathds{1}_{\tilde{R}\in[0,\Et]} \, \dRt.
\end{multline}
Define
\begin{multline}\label{F}
	F(v,v_*,\tilde{R},\omega) =  \iI  f(v,I)g(v_*,I_*)  \E^{\g/2} 
		\\ \times 
		 e^{-i \, \omega r\big(1-\tfrac{\tilde{R}}{\Et}\big)E}  r^\alpha(1-r)^\alpha \left(1-\tfrac{\tilde{R}}{\Et}\right)^{2\alpha+1} \frac{\sqrt{\tilde{R}}}{\Et^{3/2}}  (1+\tilde{R}) \, \mathds{1}_{\tilde{R}\in[0,\Et]} \dI,
\end{multline}
so that \eqref{Four-1} can be rewritten in terms of $F$ as follows
\begin{equation*}
\reallywidehat{Q^+(f,g)}(\xi,\omega) = \iv   \frac{	F(v,v_*,\tilde{R},\omega)}{(1+\tilde{R})} \, b(\hat{u}\cdot\sigma)  \, e^{-i \, \xi \cdot \big( \V  + \frac{|u|}{2} \sqrt{\tilde{R}}\, \sigma \big)}   \dv.
\end{equation*}
With the aim of removing the velocity dependence in $b$, one follow the standard arguments \cite{Bob-Exact-solutions-Fourier,BD} and exchange the unitary vectors $\hat{\xi} \leftrightarrow \hat{u}$ with an orthogonal transformation performed in the $\sigma$--integration,
\begin{align}\label{Four-2}
	\reallywidehat{Q^+(f,g)}&(\xi,\omega)  = \iv \frac{	F(v,v_*,\tilde{R},\omega)}{(1+\tilde{R})} \, b(\hat{\xi}\cdot\sigma)\, e^{-i \, \xi \cdot \big( \V   \big)-i \,   \frac{|\xi|}{2} \sqrt{\tilde{R}}\, u \cdot  \sigma }     \,   \dv \nonumber
	\\
	&= \iv \frac{	F(v,v_*,\tilde{R},\omega)}{(1+\tilde{R})} \, b(\hat{\xi}\cdot\sigma)\, e^{-i \, v \cdot\big( \frac{\xi}{2}  +  \frac{1}{2} \sqrt{\tilde{R}}\, |\xi|   \sigma   \big)} e^{-i \, v_* \cdot\big( \frac{\xi}{2}  -  \frac{1}{2} \sqrt{\tilde{R}}\, |\xi|   \sigma   \big)}     \,   \dv \nonumber
	\\ 
	&= \iRts \frac{1}{(1+\tilde{R})} \, b(\hat{\xi}\cdot\sigma)\, 	\reallywidehat{F(\cdot,\cdot,\tilde{R},\omega)}\Big( \tfrac{\xi + \sqrt{\tilde{R}}\, |\xi|   \sigma }{2}, \tfrac{\xi  - \sqrt{\tilde{R}}\, |\xi|   \sigma}{2}   \Big)    \dRts,
\end{align}
where   $\reallywidehat{F(\cdot,\cdot,z)}(x,y)$ shall be understood as the Fourier transform of $F$ with respect to the first two variables evaluated at $(x,y)$ while keeping $z$ fixed, i.e.
\begin{equation*}
\reallywidehat{F(\cdot,\cdot,z)}(x,y) = \!\!\!\int\limits_{ (v,v_*)\in \bR\times\bR } F(x,y,z) \, e^{- i v \cdot x} e^{- i v_* \cdot y}  \md v \, \md v_*, \ \text{for any} \ x, y \in \bR,\, z \in \mathbb{R}^d,\, d\geq1.
\end{equation*}
From \eqref{Four-2}, we apply the Cauchy-Schwarz inequality in $\sigma$ that pulls out $L^2-$norm of the angular kernel $b$. Then, since the Cauchy-Schwarz inequality in $\tilde{R}$ implies
\begin{multline*}
 \int_0^\infty \frac{1}{(1+\tilde{R})} \,  	\reallywidehat{F(\cdot,\cdot,\tilde{R},\omega)}\Big( \tfrac{\xi +\sqrt{\tilde{R}}\, |\xi|   \sigma }{2}, \tfrac{\xi - \sqrt{\tilde{R}}\, |\xi|   \sigma}{2}\Big) \md \tilde{R}
 \\
  \leq \left( \int_0^\infty \frac{1}{\tilde{R}^{1/2}(1+\tilde{R})} \right)^{1/2}  \left(   \int_0^\infty \frac{\tilde{R}^{1/2}}{(1+\tilde{R})} \, \left|  \reallywidehat{F(\cdot,\cdot,\tilde{R},\omega)}\Big( \tfrac{\xi +\sqrt{\tilde{R}}\, |\xi|   \sigma }{2}, \tfrac{\xi - \sqrt{\tilde{R}}\, |\xi|   \sigma}{2}\Big)  \right|^2 \md \tilde{R} \right)^{1/2},
\end{multline*}
denoting 
\begin{equation}\label{Four-3}
	\Fou= \int_{\bSp}  \int_0^\infty \frac{\tilde{R}^{1/2}}{(1+\tilde{R})} \, \left|   \reallywidehat{F(\cdot,\cdot,\tilde{R},\omega)}\Big( \tfrac{\xi +\sqrt{\tilde{R}}\, |\xi|   \sigma }{2}, \tfrac{\xi - \sqrt{\tilde{R}}\, |\xi|   \sigma}{2}\Big) \right|^2 \md \tilde{R}   \, \md \sigma
\end{equation}
the Fourier transform of the gain part \eqref{Four-2} is estimated as
\begin{equation}\label{Q+ Four tr}
\left|	\reallywidehat{Q^+(f,g)}(\xi,\omega)  \right|
	%\leq \|b\|_{L^2(\bSp)} \left( \int_{\bSp} \left( \int_0^\infty \frac{1}{(1+\tilde{R})} \,  	\widehat{F}^{(1,2)}\Big( \frac{\xi}{2}  +  \frac{1}{2} \sqrt{\tilde{R}}\, |\xi|   \sigma, \frac{\xi}{2}  -  \frac{1}{2} \sqrt{\tilde{R}}\, |\xi|   \sigma,\tilde{R},\omega\Big) \md \tilde{R} \right)^2    \md \sigma \right)^{1/2}
	\\
	 \leq  \pi^{1/2} \, \|b\|_{L^2(\bSp)} \Fou^{1/2}.
\end{equation}
In order to proceed with the estimation, we study  $\Fou$ from \eqref{Four-3}. The idea is, as in \cite{BD}, to use $\sigma$-integration in $\bSp$ with $|\xi|$ dependence to pass to the full $\bR$-integration and conveniently extract $|\xi|^2$ factor that will give a proper order of the Sobolev norm in the velocity variable. With respect to \cite{BD}, we need to take care of the extra $\tilde{R}$-integration. First, using the fundamental theorem of calculus,  \eqref{Four-3} can be rewritten as
\begin{equation*}
\Fou=   \int_0^\infty \int_{\bSp}  \int_{|\xi|}^{\infty} \frac{\tilde{R}^{1/2}}{(1+\tilde{R})} \, 	\partial_\eta \left|  \reallywidehat{F(\cdot,\cdot,\tilde{R},\omega)}\Big( \tfrac{\xi + \sqrt{\tilde{R}} \, \eta \, \sigma}{2}, \tfrac{\xi - \sqrt{\tilde{R}} \, \eta \, \sigma}{2}\Big)\right|^2 \, \md \eta \, \md \sigma \,  \md \tilde{R}.
\end{equation*}  
Computing the involved derivative, one can estimate,
\begin{multline*}
\leq  \int_0^\infty \int_{\bSp}  \int_{|\xi|}^{\infty} \frac{\tilde{R}}{(1+\tilde{R})} \,  \left|    \reallywidehat{F(\cdot,\cdot,\tilde{R},\omega)}\Big( \tfrac{\xi + \sqrt{\tilde{R}} \, \eta \, \sigma}{2}, \tfrac{\xi - \sqrt{\tilde{R}} \, \eta \, \sigma}{2}\Big)  \right|  
\\
\times \left| \left( \nabla_2-\nabla_1 \right)  \reallywidehat{F(\cdot,\cdot,\tilde{R},\omega)}\Big( \tfrac{\xi + \sqrt{\tilde{R}} \, \eta \, \sigma}{2}, \tfrac{\xi - \sqrt{\tilde{R}} \, \eta \, \sigma}{2}\Big)   \right| \, \md \eta \, \md \sigma \,  \md \tilde{R},
\end{multline*}
where $\nabla_i$ denotes the gradient with respect to $i$-th variable, $i=1,2$. 
Next, we perform the change of variables $\eta \mapsto \check{\eta} = \sqrt{ \tilde{R}}\,  \eta$,   and extend the space for $\sigma$ to the full sphere $\bS$,
\begin{multline*}
	\Fou
	\leq  \int_0^\infty \int\limits_{\substack{\sigma \in \bS \\ \check{\eta} \in \mathbb{R}: \,   \check{\eta}\geq \sqrt{\tilde{R}}|\xi| } }   \frac{\tilde{R}^{1/2}}{(1+\tilde{R})} \,  \left| \reallywidehat{F(\cdot,\cdot,\tilde{R},\omega)}\Big( \tfrac{\xi + \check{\eta} \, \sigma }{2}, \tfrac{\xi  - \check{\eta}\, \sigma  }{2} \Big) \right| 
	\\
	\times \left| \left( \nabla_2-\nabla_1 \right) \reallywidehat{F(\cdot,\cdot,\tilde{R},\omega)}\Big( \tfrac{\xi + \check{\eta} \, \sigma }{2}, \tfrac{\xi  - \check{\eta}\, \sigma  }{2} \Big)  \right| \, \md \check{\eta} \, \md \sigma \,  \md \tilde{R}.
\end{multline*}
The next step is to combine spherical $\sigma\in\bS$ integration and one-dimensional $\check{\eta}$ integration into an integration over $\bR$-space  with respect to the variable $\overline{\eta}=\check{\eta}\,\sigma$ with the usual Jacobian of the three-dimensional spherical coordinates change $\frac{1}{|\overline{\eta}|^2} \md \overline{\eta} = \md \check{\eta} \, \md \sigma$, so that the last inequality becomes
\begin{equation}\label{Four-4}
	\Fou
	\leq \frac{1}{|\xi|^2} \tilde{\Fou},
\end{equation}
with 
\begin{multline*}
	\tilde{\Fou}
=  \int_0^\infty   \int\limits_{\overline{\eta} \in \bR: |\overline{\eta}|\geq\sqrt{\tilde{R}  } |\xi| } \frac{\tilde{R}^{-1/2}}{(1+\tilde{R})} \,  \left|  \reallywidehat{F(\cdot,\cdot,\tilde{R},\omega)}\Big( \tfrac{\xi+\overline{\eta}}{2}, \tfrac{\xi-\overline{\eta}}{2}\Big) \right| 
\\
\times \left| \left( \nabla_2-\nabla_1 \right)   \reallywidehat{F(\cdot,\cdot,\tilde{R},\omega)}\Big( \tfrac{\xi+\overline{\eta}}{2}, \tfrac{\xi-\overline{\eta}}{2}\Big)     \right| \, \md \overline{\eta} \,   \md \tilde{R}.
\end{multline*}
Returning \eqref{Four-4} into \eqref{Q+ Four tr}, one gets
\begin{multline}\label{Four-5}
\| Q^+(f,g) \|^2_{\dot{H}^{1}_{v}(\bRfp)} = \int_{(\xi,\omega)\in \bRfr} |\xi|^2 \,	|\reallywidehat{Q^+(f,g)}(\xi,\omega)|^2 \ \md \omega \, \md \xi
	%\leq \|b\|_{L^2(\bSp)} \left( \int_{\bSp} \left( \int_0^\infty \frac{1}{(1+\tilde{R})} \,  	\widehat{F}^{(1,2)}\Big( \frac{\xi}{2}  +  \frac{1}{2} \sqrt{\tilde{R}}\, |\xi|   \sigma, \frac{\xi}{2}  -  \frac{1}{2} \sqrt{\tilde{R}}\, |\xi|   \sigma,\tilde{R},\omega\Big) \md \tilde{R} \right)^2    \md \sigma \right)^{1/2}
	\\
	\leq  \pi \, \|b\|_{L^2(\bSp)}^2  \int_{(\xi,\omega)\in \bRfr}   \tilde{\Fou}  \, \md \omega \, \md \xi.
\end{multline}
The next goal is to estimate the integral on the right-hand side of  the last inequality \eqref{Four-5}.
Extending the domain for $\overline{\eta}$ variable, one gets
\begin{multline*}
\int_{(\xi,\omega)\in \bRfr}	\tilde{\Fou} \, \md \omega \, \md \xi 
	\leq   \int_0^\infty  \frac{\tilde{R}^{-1/2}}{(1+\tilde{R})}  \int\limits_{ \substack{(\xi,\omega)\in \bRfr \\ \overline{\eta} \in \bR }} \,  \left|   \reallywidehat{F(\cdot,\cdot,\tilde{R},\omega)}\Big( \tfrac{\xi+\overline{\eta}}{2}, \tfrac{\xi-\overline{\eta}}{2}\Big)   \right| 
	\\
	\times \left| \left( \nabla_2-\nabla_1 \right)   \reallywidehat{F(\cdot,\cdot,\tilde{R},\omega)}\Big( \tfrac{\xi+\overline{\eta}}{2}, \tfrac{\xi-\overline{\eta}}{2}\Big)    \right| \, \md \overline{\eta} \, \, \md \xi \, \md \omega \,  \md \tilde{R}.
\end{multline*}
Now, we change the variables $(\xi, \overline{\eta}) \mapsto (\tilde{\xi},\tilde{\eta}) = \Big(  \frac{\xi+\overline{\eta}}{2},  \frac{\xi-\overline{\eta}}{2}\Big)$ with the  Jacobian $\frac{\partial(\tilde{\xi},\tilde{\eta})}{\partial(\xi, \overline{\eta}) } = 2^{-3}$,
\begin{multline*}
	\int_{(\xi,\omega)\in \bRfr}	\tilde{\Fou} \, \md \omega \, \md \xi 
	\leq 2^3  \int_0^\infty  \frac{\tilde{R}^{-1/2}}{(1+\tilde{R})} 
	\\ \times \int\limits_{ \substack{(\tilde{\xi},\omega)\in \bRfr \\ \tilde{\eta} \in \bR }} \,  \left|   \reallywidehat{F(\cdot,\cdot,\tilde{R},\omega)}(\tilde{\xi},\tilde{\eta})   \right| 
  \left| \left( \nabla_{\tilde{\eta}}-\nabla_{\tilde{\xi}} \right) \reallywidehat{F(\cdot,\cdot,\tilde{R},\omega)}(\tilde{\xi},\tilde{\eta})    \right| \, \md \tilde{\eta} \, \, \md \tilde{\xi} \, \md \omega \,  \md \tilde{R}.
\end{multline*}
Next, keeping integral with respect to $\tilde{R}$ as the outer integral and  performing Cauchy-Schwarz with respect to other variables, one gets, after  Plancherel identity,
\begin{multline}
	\int_{(\xi,\omega)\in \bRfr}	\tilde{\Fou} \, \md \omega \, \md \xi 
	\leq 2^3  \int_0^\infty  \frac{\tilde{R}^{-1/2}}{(1+\tilde{R})} 
%
%	\\ 
%\color{gray}	\times \left( \int\limits_{ \substack{(\tilde{\xi},\omega)\in \bRfp \\ \tilde{\eta} \in \bR }} \,  \left|  \reallywidehat{F(\cdot,\cdot,\tilde{R}, \omega)}( \tilde{\xi},  \tilde{\eta}) \right|^2  \, \md \tilde{\eta} \, \, \md \tilde{\xi} \, \md \omega \right)^{1/2} \left( \int\limits_{ \substack{(\tilde{\xi},\omega)\in \bRfp \\ \tilde{\eta} \in \bR }} 
%	\left| \left( \nabla_{\tilde{\eta}}-\nabla_{\tilde{\xi}} \right) \reallywidehat{F(\cdot,\cdot,\tilde{R}, \omega)}( \tilde{\xi},  \tilde{\eta})   \right|^2 \, \md \tilde{\eta} \, \, \md \tilde{\xi} \, \md \omega \right)^{1/2}   \md \tilde{R}
%	\\
%\color{gray} \times	 \left( \int_{  \omega \in \bRp} \,  \left\|  \reallywidehat{F(\cdot,\cdot,\tilde{R}, \omega)}  \right\|^2_{L^2(\bR\times\bR)}  \, \md \omega \right)^{1/2} \left( \int_{  \omega \in \bRp} 
%	\left\| \left( \nabla_{\tilde{\eta}}-\nabla_{\tilde{\xi}} \right) \reallywidehat{F(\cdot,\cdot,\tilde{R}, \omega)}  \right\|^2_{L^2(\bR\times\bR)}  \md \omega \right)^{1/2}   \md \tilde{R}
%	\\
%\color{gray}	= 2^3 (2\pi)^{6}   \int_0^\infty  \frac{\tilde{R}^{-1/2}}{(1+\tilde{R})} 
%		 \left( \int_{  \omega \in \bRp} \,  \left\|   F \right\|^2_{L^2(\bR\times\bR)}  \, \md \omega \right)^{1/2} \left( \int_{  \omega \in \bRp} 
%	\left\| (v_*-v) F  \right\|^2_{L^2(\bR\times\bR)}  \md \omega \right)^{1/2}   \md \tilde{R}.
%
	\\
	\times	 \left( \int_{  \omega \in \bRr} \,  \left\|  \reallywidehat{F(\cdot,\cdot,\tilde{R}, \omega)}  \right\|^2_{L^2(\bR\times\bR)}  \, \md \omega \right)^{1/2} \left( \int_{  \omega \in \bRr} 
	\left\| \left( \nabla_{2}-\nabla_{1} \right) \reallywidehat{F(\cdot,\cdot,\tilde{R}, \omega)}  \right\|^2_{L^2(\bR\times\bR)}  \md \omega \right)^{1/2}   \md \tilde{R}
	\\
	= 2^3 (2\pi)^{6}   \int_0^\infty  \frac{\tilde{R}^{-1/2}}{(1+\tilde{R})} 
	\left( \int_{  \omega \in \bRr} \,  \left\|   F(\cdot,\cdot,\tilde{R},\omega) \right\|^2_{L^2(\bR\times\bR)}  \, \md \omega \right)^{1/2} 
	\\ \times
	\left( \int_{  \omega \in \bRr} 
	\left\| (\cdot_2-\cdot_1)  F(\cdot,\cdot,\tilde{R},\omega)  \right\|^2_{L^2(\bR\times\bR)}  \md \omega \right)^{1/2}   \md \tilde{R}
	\\
		= 2^3 (2\pi)^{6}   \int_0^\infty  \frac{\tilde{R}^{-1/2}}{(1+\tilde{R})} 
  \left\|   F(\cdot,\cdot,\tilde{R},\cdot) \right\|_{L^2(\bR\times\bR\times\bRr)} 
	\left\| (\cdot_2-\cdot_1)  F(\cdot,\cdot,\tilde{R},\cdot)  \right\|_{L^2(\bR\times\bR\times\bRr)}   \md \tilde{R}. \label{est F 4}
\end{multline}
In order to proceed, we need to suitably estimate $F$.  First, using Lemma \ref{Lemma F est}, \eqref{F est lemma} applied to $x=\omega \,\big(1-\tfrac{\tilde{R}}{\Et}\big)E$ implies, 
 \begin{multline}\label{F est 1}
	\left| \int_{r\in[0,1]} e^{-i  \, \omega \,\big(1-\tfrac{\tilde{R}}{\Et}\big)E \, r} \,  r^\alpha(1-r)^\alpha \md r  \right| \leq  2 \min \left\{ 1, \frac{1}{|\omega|  \,\big(1-\tfrac{\tilde{R}}{\Et}\big)E} \right\}
	\\ \leq 2 \min \left\{ 1, \frac{1}{\left(|\omega|  \,\big(1-\tfrac{\tilde{R}}{\Et}\big)E\right)^s} \right\},
\end{multline}
for any $0\leq s \leq 1$ to be chosen later.
Using the properties of $\min$ function, the region for $\omega$ can be split, to obtain the  final estimate
\begin{equation*}
		\left| \int_{r\in[0,1]} e^{-i \, \omega \, r\big(1-\tfrac{\tilde{R}}{\Et}\big)E}  r^\alpha(1-r)^\alpha \md r  \right| \leq  2 \left( \mathds{1}_{|\omega|\leq 1}  +  \frac{1}{\Big(\omega \big(1-\tfrac{\tilde{R}}{\Et}\big)E\Big)^s} \mathds{1}_{|\omega| \geq 1}\right).
\end{equation*}
Second, we estimate,
\begin{equation*}
	\left(1-\tfrac{\tilde{R}}{\Et}\right)^{2\alpha+1 - s } \frac{\sqrt{\tilde{R}}}{\Et^{3/2}}  (1+\tilde{R}) \, \mathds{1}_{\tilde{R}\in[0,\Et]}    \leq \frac{ (1+\tilde{R})}{\Et} \mathds{1}_{\tilde{R}\in[0,\Et]}  \leq 2,
\end{equation*}
since $\Et \geq 1$, by \eqref{Et}. Thus,
\begin{multline}\label{est F}
\left| 	F(v,v_*,\tilde{R},\omega)  \right| \leq 4  \mathds{1}_{|\omega| \leq 1} \ien  |f(v,I)| \, |g(v_*,I_*)|  \E^{\g/2} \md I_* \, \md I
\\  + \frac{4 }{|\omega|^s  \, m^s} \mathds{1}_{|\omega| \geq 1} \ien  |f(v,I)| \, |g(v_*,I_*)|  \E^{\g/2-s} \md I_* \, \md I.
\end{multline}
For the first term in last inequality, the estimate on the collision kernel \eqref{est E} and Cauchy-Schwarz inequality in $I$ and $I_*$ imply
\begin{multline*}
\ien  |f(v,I)| \, |g(v_*,I_*)|  \E^{\g/2} \md I_* \, \md I 
\\
 \leq \left( \int_{I\in \bRp} |f(v,I)|^2   \la v, I \ra^{2k} \, \md I \right)^{1/2} \left( \int_{I_*\in \bRp} |g(v_*,I_*)|^2   \la v_*, I_* \ra^{2k} \,  \md I_*  \right)^{1/2} \int_{I\in \bRp} \la v, I \ra^{2(\gamma - k)} \md I,
\end{multline*}
for $k$ sufficiently large enough ensuring the finiteness of the term
\begin{equation}\label{c1 tilde}
\int_{I\in \bRp} \la v, I \ra^{2(\gamma - k)} \md I \leq \int_{I\in \bRp} \left(1+\tfrac{I}{m}  \right)^{\gamma - k} \md I   =: \cI, \quad k > \gamma+1.
\end{equation}
For the second term, restrict $s$ so that $-1 < \gamma/2-s \leq 0$, 
\begin{equation*}
\E^{\g/2 - s } \leq \left(\frac{ I +I_*}{m}\right)^{\g/2-s},
\end{equation*}
and, on the other side, for any $k\geq 0$, since $1+\tfrac{I+I_*}{m}\leq \la v, I\ra^{2}\la v_*, I_*\ra^{2}$,
\begin{equation*}
	1 \leq \frac{\la v, I\ra^{k}\la v_*, I_*\ra^{k}}{\left(1+\tfrac{I+I_*}{m}\right)^{k/2}}.
\end{equation*}
Thus,
\begin{multline*}
 \ien  |f(v,I)| \, |g(v_*,I_*)|  \E^{\g/2-s} \md I_* \, \md I    
 \\
 \leq  \ien  |f(v,I)|   \la v, I \ra^{k}  |g(v_*,I_*)|  \la v_*, I_* \ra^{k}  \frac{\left(\frac{ I +I_*}{m}\right)^{\g/2-s}}{\left(1+\tfrac{I+I_*}{m}\right)^{k/2}} \md I_* \, \md I.
\end{multline*}
The Cauchy-Schwarz inequality in $(I,I_*)$ implies
\begin{multline}\label{F est 2}
 \ien  |f(v,I)| \, |g(v_*,I_*)|  \E^{\g/2-s} \md I_* \, \md I    
\\
\leq    \left( \int_{I\in \bRp} |f(v,I)|^2   \la v, I \ra^{2k} \, \int_{I_*\in \bRp}   \frac{\left(\frac{ I +I_*}{m}\right)^{\g/2-s}}{\left(1+\tfrac{I+I_*}{m}\right)^{k/2}}  \md I_* \, \md I \right)^{1/2} 
\\ \times \left( \int_{I_*\in \bRp} |g(v_*,I_*)|^2   \la v_*, I_* \ra^{2k} \, \int_{I\in \bRp}   \frac{\left(\frac{ I +I_*}{m}\right)^{\g/2-s}}{\left(1+\tfrac{I+I_*}{m}\right)^{k/2}}  \md I \,   \md I_*  \right)^{1/2}.
\end{multline}
Since $\g/2-s\leq 0$, $k\geq 0$,
\begin{equation*}
\frac{\left(\frac{ I +I_*}{m}\right)^{\g/2-s}}{\left(1+\tfrac{I+I_*}{m}\right)^{k/2}}  \leq \frac{\left(\frac{ I }{m}\right)^{\g/2-s}}{\left(1+\tfrac{I}{m}\right)^{k/2}}, \quad \text{for any } \ I, I_* \in \bRp,
\end{equation*}
the $I$--integral in \eqref{F est 2} is bounded by the constant 
\begin{equation}\label{c2 tilde}
\cIIs =  \int_{I\in \bRp}   \frac{\left(\frac{ I}{m}\right)^{\g/2-s}}{\left(1+\tfrac{I}{m}\right)^{k/2}} \,   \md I, \quad k>\gamma+2-2s, \quad -1 < \g/2 - s \leq 0.
\end{equation}
Therefore, \eqref{F est 2} is estimates as
\begin{multline}\label{F est 3}
	\ien  |f(v,I)| \, |g(v_*,I_*)|  \E^{\g/2-s} \md I_* \, \md I    
	\\
	\leq  \cIIs  \left( \int_{I\in \bRp} |f(v,I)|^2   \la v, I \ra^{2k} \, \md I \right)^{1/2} 
\left( \int_{I_*\in \bRp} |g(v_*,I_*)|^2   \la v_*, I_* \ra^{2k}  \,   \md I_*  \right)^{1/2}
\\
=  \cIIs  \| f(v,\cdot)\|_{L^2_{k}(\bRp)}  \| g(v_*,\cdot)\|_{L^2_{k}(\bRp)}\,,
\end{multline}
which, in turn, leads to the following estimate for \eqref{est F} 
\begin{equation*}
	\left| 	F(v,v_*,\tilde{R},\omega)  \right| \leq 4 \,   \max\{\cI,  \cIIs\} 
	\Bigg( \mathds{1}_{|\omega| \leq 1} 
	+ \frac{1}{|\omega|^s  \, m^s} \mathds{1}_{|\omega| \geq 1} \Bigg)  \| f(v,\cdot)\|_{L^2_{k}(\bRp)}  \| g(v_*,\cdot)\|_{L^2_{k}(\bRp)},
%	\\ \times \left( \int_{I\in \bRp} |f(v,I)|^2   \la v, I \ra^{2k} \, \md I \right)^{1/2} \left( \int_{I_*\in \bRp} |g(v_*,I_*)|^2   \la v_*, I_* \ra^{2k} \,  \md I_*  \right)^{1/2}.
\end{equation*}
where the constants are given in \eqref{c1 tilde} and \eqref{c2 tilde}. 
Thus, the following  $\tilde{R}$--uniform estimate is obtained,
\begin{equation*}
	\left\|   F(\cdot,\cdot,\tilde{R},\cdot) \right\|_{L^2(\bR\times\bR\times\bRr)}   \leq c_\g \,  \| f\|_{L^2_k(\bRfp)} \,  \| g\|_{L^2_k(\bRfp)},
\end{equation*}
where the constant $c_\g $ is given by 
\begin{equation}\label{cg}
	\begin{split}
	c_\g  & = \left(4 \max\{\cI,  \cIIs\} \right)^2  \int_{\omega\in\bRr}	\Bigg( \mathds{1}_{|\omega| \leq 1} 
	+ \frac{1}{|\omega|^{2s}  \, m^{2s}} \mathds{1}_{|\omega| \geq 1} \Bigg) \md \omega 
\\&	= 2 \left(4 \max\{\cI,  \cIIs \} \right)^2    \left( 1 + \frac{m^{-2s}}{2s-1} \right),
 	\end{split}
\end{equation}
and the following   choices for $k$ and $s$
\begin{equation*}
k>1+\gamma \qquad \text{and} \qquad
\bigg\{\begin{array}{clc}
s = 1 &\text{if} & 0<\gamma\leq 2 \\
1/2<s<1 & \text{if} & \gamma=0\,.
\end{array}
\end{equation*}
 Finally, in order to estimate \eqref{est F 4}, we notice that 
\begin{equation*}
	|v-v_*| \leq 2 \, \la v,I \ra  \la v_*,I_* \ra,
\end{equation*}
and associate the brackets to the corresponding input functions $f$ and $g$.  This implies that
\begin{equation*}
		\left\| (\cdot_2-\cdot_1)  F(\cdot,\cdot,\tilde {R},\cdot)  \right\|_{L^2(\bR\times\bR\times\bRr)}  \leq 2 \, c_\g \,  \| f\|_{L^2_{k+1}(\bRfp)} \,  \| g\|_{L^2_{k+1}(\bRfp)}.
\end{equation*}
Thus, for any $0\leq \g \leq 2$, with the aforementioned choice for $k$ and $s$,
\begin{multline*}
	\left\|   F(\cdot,\cdot,\tilde{R},\cdot) \right\|_{L^2(\bR\times\bR\times\bRr)} 
	\left\| (\cdot_2-\cdot_1)  F(\cdot,\cdot,\tilde{R},\cdot)  \right\|_{L^2(\bR\times\bR\times\bRr)}\\  
	\leq 2 \, c_\g^2  \| f\|^2_{L^2_{k+1}(\bRfp)} \,  \| g\|^2_{L^2_{k+1}(\bRfp)},
\end{multline*}
which leads to
\begin{equation*}
	\int_{(\xi,\omega)\in \bRfr}	\tilde{\Fou} \, \md \omega \, \md \xi 
	\leq
	 2^4 (2\pi)^{6}  \pi \, c_\g^2 \,  \| f\|^2_{L^2_{k+1}(\bRfp)} \,  \| g\|^2_{L^2_{k+1}(\bRfp)}.
\end{equation*}
This proves the statement \eqref{eq Th smooth vel}, following \eqref{Four-5}, with the generic constant now given by
\begin{equation}\label{C smooth vel}
\Csvel =2^2 (2\pi)^{3}  \pi \, c_\g,
\end{equation}
and $c_\g$ from \eqref{cg}.

\end{proof}

\subsection{Smoothing properties with respect to the internal energy variable}\label{Sec: proof smooth I}

\begin{proof}[Proof of Theorem \ref{inc-reg}, Part 2.] We start with the gain operator in the form \eqref{coll gain operator new}. Noting that $I_*$ is actually a function of $I$ as described in \eqref{Is as I}
	\begin{equation*}
\partial_I  (\max\{0,I_*\})^\alpha  = - \alpha \max\{0,I_*^{\alpha-1}\}, \quad \text{for} \ \alpha>0,
	\end{equation*}
the following derivative can be computed
\begin{equation*}
	\partial_I \left(I I_*\right)^\alpha = \alpha I^\alpha   (\max\{0,I_*\})^\alpha  \left(\frac{1}{I} - \frac{1}{I_*}\right) =:  I^\alpha   (\max\{0,I_*\})^\alpha  \kk, \quad \kk:=\alpha \left(\frac{1}{I} - \frac{1}{I_*}\right).
\end{equation*}
Returning to the original variables, we get exactly \eqref{coll gain operator} with the addition of the term $\kk$,
		\begin{multline*}
\RI :=\partial_{I}		Q^+(f,g)(v,I) =\istarintro  f(v',I')g(v'_*,I'_*) \left(\frac{I }{I' I'_* }\right)^{\alpha}  (\max\{0,I_*\})^\alpha \\ \times b(\hat{u}\cdot\sigma) \, \E^{\g/2} \, \kk \, \dpo \, \dstar.
	\end{multline*}
Then, its weak form, for some suitable test function $\chi$, using an  interchange of prime and non-prime variables, reads
\begin{multline*}
	\int_{ \bRfp } \test \, \RI \, \chi(v,I) \, \md v \, \md I 
	=\iallintro  f(v,I) \, g(v_*,I_*)   \,   b(\hat{u}\cdot\sigma) \, \E^{\g/2} \, \\ \times \kk'\, \testp \chi(v',I')  \, \dpo  \, 	\dall.
\end{multline*}
Then, since
\begin{equation*}
  \kk' \testp =  \alpha   \left(\frac{1}{I'} - \frac{1}{I'_*}\right) \testp = \alpha \left( r^{\delta - 1 } - \frac{r^\delta}{(1-r)} \right) \left(1-R\right)^{\delta-1} E^{\delta -1},
\end{equation*}
the last equality can be written as
\begin{multline*}
	\int_{ \bRfp } \test \, \RI \, \chi(v,I) \, \md v \, \md I 
\\	 \leq m^{\delta-1} \int_{(\bRfp)^2} | f(v,I) | \, | g(v_*,I_*) |  \,  \E^{\g/2+\delta-1} 	\mS_2(|\chi|) \,	\dvvs,
\end{multline*}
where $\mS_2$  is the averaging operator with the constant $\rho_2$,  defined by $\mS^\psi$ and $\rho^\psi$  from   \eqref{S operator gen} and \eqref{rho gen} for   $\psi= \alpha \left( r^{\delta - 1 }   + {r^\delta}{(1-r)^{-1}} \right) \left(1-R\right)^{\delta-1}$. Then using the estimate on $L^2$-norm of $	\mS_2(\chi) $ given in  \eqref{S estimate L1 gen} with   the constant $\rho_2$ finite for $ \alpha+\delta>\frac{1}{2}$, 
the Cauchy-Schwartz inequality in $(v,I)$, and then in $(v_*,I_*)$ imply
\begin{multline*}
	\int_{\bRfp} \test \, \RI \, \chi(v,I) \, \md v \, \md I 
		 \leq   m^{\delta-1}   \rho_2 \,   \| b \|_{\Ls(\bSp)}   \| \chi\|_{L^{2}}  \\ \times \int_{\bRfp} \, |g(v_*,I_*)|  \left( \int_{\bRfp}  |f(v,I)|^2   \E^{\g+2\delta-2}  \md v\, \md I \right)^{1/2}	 	\md v_* \md I_*\\
		 	 \leq    m^{\delta-1}   \rho_2 \,   \| b \|_{\Ls(\bSp)}   \| \chi\|_{L^{2}}\| g\|_{L^2_k}  \left( \int_{(\bRfp)^2}  |f(v,I)|^2   \E^{\g+2\delta-2} \la v_*, I_*\ra^{-2k} \,  \dvvs \right)^{1/2}
		 	 \\
		 	 \leq  m^{\delta-1}   \rho_2 \,   \| b \|_{\Ls(\bSp)}   \| \chi\|_{L^{2}}   \| g\|_{L^2_{(\g+2\delta+1/2)^+}} \mathcal{C}_{\g/2+\delta-1}  \| f\|_{L^2_{(\g+2\delta+1/2)^+}}.
\end{multline*}
This leads to 
	\begin{multline}
	\int_{\bRfp} \test \, \left(\partial_{I}		Q^+(f,g)(v,I) \right) \, \chi(v,I) \, \md v \, \md I 
	\\
	\leq  m^{\delta-1}   \rho_2 \,   \| b \|_{\Ls(\bSp)}\  \mathcal{C}_{\g/2+\delta-1}  \| f\|_{L^2_{(\g+2\delta+1/2)^+}}    \| g\|_{L^2_{(\g+2\delta+1/2)^+}}    \| \chi\|_{L^{2}},
\end{multline}
where  the constant $\mathcal{C}$ is given in \eqref{constant Ca}. This concludes the proof with a constant
\begin{equation}\label{C smooth I}
	\Csen =  m^{\delta-1}   \rho_2 \,    \mathcal{C}_{\g/2+\delta-1}. 
\end{equation}

\end{proof}

\section{Application to the Boltzmann equation: Proof of Theorem \ref{Th regularity}}\label{Sec: proof reg}

\begin{proof}[Proof of Theorem \ref{Th regularity}, Part 1.]
Without loss of generality, by density arguments, 
the angular part of the collision kernel $b \in L^1(\bSp)$ is split into $b_2 \in L^2(\bSp)$ and $\tilde{b} \in L^1(\bSp)$ in the following way,
\begin{equation}
b(\hat{u}\cdot\sigma) = b_2(\hat{u}\cdot\sigma)  + \tilde{b}(\hat{u}\cdot\sigma), \quad \text{with} \quad  \|\tilde{b}\|_{L^1(\bSp)} \leq \varepsilon,
\end{equation}
with $\varepsilon$ to be chosen later. Then, the gain part is split into the two terms for each of those collision kernels,
\begin{equation}
	Q_b^+(f,g)(v,I) = Q_{b_2}^+(f,g)(v,I)   + Q_{\tilde{b}}^+(f,g)(v,I),
\end{equation}
with subscripts indicating the corresponding angular part. The idea is to apply Theorem \ref{inc-reg} on $Q_{b_2}^+$ and Theorem \ref{inc-reg-b1} on $Q_{\tilde{b}}^+$. Recalling the Boltzmann equation \eqref{BE}, 
derivative 
with respect to $i$-th component of velocity variable, $v_i$,  reads
\begin{equation}\label{BE der vi}
\partial_t \partial_{v_i} f = \partial_{v_i} Q_{b_2}^+(f,f)(v,I)  +  \partial_{v_i} Q_{\tilde{b}}^+(f,f)(v,I) - 	\nu[f] \partial_{v_i} f  - f \partial_{v_i} 	\nu[f].
\end{equation}
With the notation \eqref{H1v}, multiplying \eqref{BE der vi} by $\partial_{v_i} f$ and integrating with respect to  $(v,I) $ implies
\begin{multline*}
\frac{1}{2} \,\frac{\md}{\md t}\| f \|_{\dot{H}^{1}_{v_i}}^2 = 	\int_{ \bRfp } \partial_{v_i} f \  \partial_{v_i} Q_{b_2}^+(f,f)  \, \md v \, \md
+   	\int_{ \bRfp } \partial_{v_i} f\, \partial_{v_i} Q_{\tilde{b}}^+(f,f) \, \md v \, \md I
\\ - 	\int_{ \bRfp }	\nu[f] \, |\partial_{v_i} f|^2 \, \md v \, \md I
  -  	\int_{ \bRfp } f \, \partial_{v_i}  f \partial_{v_i} 	\nu[f]  \, \md v \, \md I
 =: T_1 + T_2 - T_3 + T_4.
\end{multline*}
The term $T_1$ can be estimated using the Cauchy-Schwarz inequality and Theorem \ref{inc-reg},
\begin{equation*}
T_1 	\leq   \| Q_{b_2}^+(f,f) \|_{\dot{H}^{1}_{v}}  \ \| f \|_{\dot{H}^{1}_{v_i}} \leq   \Csvel   \, \|b_2\|_{L^{2}(\bSp)}\| f \|_{L^{2}_{(\gamma+2)^+}}^2   \| f \|_{\dot{H}^{1}_{v_i}}.
\end{equation*}
For the term $T_2$, Theorem \ref{inc-reg-b1} for $g=f$ implies
\begin{multline*}
	T_2 = 
	\int_{\bRfp } \partial_{v_i} f  \,\left( \Prim_i  + \R_i \right) \, \md  v \, \md I
	\leq 
	2			\int_{ \bRfp } | \partial_{v_i} f |    \, Q^+\left( \left| \nabla_{v}  f \right|,  f \right) \, \md  v \, \md I 
\\	+  \Ccmvel \, \| \tilde{b} \|_{{L^{1}(\bSp)}}  \|  \partial_{v_i} f  \|_{L^2}
	\| f \|^2_{L^{2}_{(\g+3/2)^+}}.
\end{multline*}
Then, $L^2$-theory \cite{MC-Alonso-Lp} and, in particular,   \eqref{Lp weak form} applied	  to the first term of $T_2$,  imply
\begin{equation*}
	T_2					  		\leq  \tilde{ \rho }\,  	\| \tilde{b} \|_{{L^{1}(\bSp)}}    \| \nabla_v f    \|^2_{L^2_{ {\g}/{2}}}  \|  	f    \|_{L^1_{\g}} 	+  \Ccmvel \, \| \tilde{b} \|_{{L^{1}(\bSp)}}  \|  \partial_{v_i} f  \|_{L^2}
	\| f \|^2_{L^{2}_{(\g+3/2)^+}}.
\end{equation*}
The estimate on $T_3$ follows from the lower bound on the collision frequency \eqref{coll freq lower est},
\begin{equation*}
	T_3 \geq \mA \,  \| \partial_{v_i} f    \|_{{L}^{2}_{\g/2}}^2.
\end{equation*}
Finally, term $T_4$ is  bounded by using the estimate on the derivative of the collision frequency \eqref{est derivative}, and the Cauchy-Schwartz inequality,
\begin{equation*}
	T_4  \leq\Clvel \,	\| f \|^2_{L^{2}_{(\g+3/2)^+}} \|  \partial_{v_i} f \|_{L^2}.
\end{equation*}
Gathering all estimates yields
\begin{multline*}
	\frac{1}{2} \,\frac{\md}{\md t}\| f \|_{\dot{H}^{1}_{v_i}}^2 \leq
	\Csvel \, \|b_2\|_{L^{2}}\| f \|_{L^{2}_{(\gamma+2)^+}}^2   \| f \|_{\dot{H}^{1}_{v_i}}
	\\
	+   \tilde{ \rho }\,  	\| \tilde{b} \|_{{L^{1}(\bSp)}}    \| \nabla_v f    \|^2_{L^2_{ {\g}/{2}}}  \|  	f    \|_{L^1_{\g}} 	+  \Ccmvel \, \| \tilde{b} \|_{{L^{1}(\bSp)}}  \|  \partial_{v_i} f  \|_{L^2}
	\| f \|^2_{L^{2}_{(\g+3/2)^+}}
	\\
+	\Clvel	\| f \|^2_{L^{2}_{(\g+3/2)^+}}   \| \partial_{v_i} f \|_{L^2}
	 -\mA \,  \| \partial_{v_i} f    \|_{{L}^{2}_{\g/2}}^2.
\end{multline*}
With Young's inequality and monotonicity of norms, the last term becomes
\begin{equation*}
	\frac{1}{2} \,\frac{\md}{\md t}\| f \|_{\dot{H}^{1}_{v_i}}^2 \leq
\frac{3}{2} \varepsilon \| f \|_{\dot{H}^{1}_{v_i}}^2
	+   \varepsilon \, \tilde{ \rho }\,   \| \nabla_v f    \|^2_{L^2_{ {\g}/{2}}}  \|  	f    \|_{L^1_{\g}} 	 
	- \mA \,  \| \partial_{v_i} f    \|_{{L}^{2}_{\g/2}}^2 +  \frac{\K}{3},
\end{equation*}
where the constant is given by
\begin{equation}\label{K}
\K = 
\sup_{t\geq0} \frac{3}{ \varepsilon} \| f \|_{L^{2}_{(\gamma+2)^+}}^4  \left(	\left(\Csvel \right)^2\|b_2\|_{L^{2}}^2
+   \varepsilon^2 \,    \left( \Ccmvel \right)^2   +   \left(\Clvel \right)^2   \right) 
\end{equation}
Summation over $i=1,2,3$, and noting that  $\| \nabla_{v} f    \|_{{L}^{2}_{\g/2}}^2  =  \sum_{i=1}^{3} \| \partial_{v_i} f    \|_{{L}^{2}_{\g/2}}^2$,
 imply
 \begin{equation*}
 	\begin{split}
	\frac{1}{2} \,\frac{\md}{\md t}\| f \|_{\dot{H}^{1}_{v}}^2  & \leq
	\frac{3}{2} \varepsilon \| f \|_{\dot{H}^{1}_{v}}^2
	+ 3\,  \varepsilon \, \tilde{ \rho }\,   \|  	f    \|_{L^1_{\g}} 	  \,  \| \nabla_v f    \|^2_{L^2_{ {\g}/{2}}} 
	- \mA\,  \| \nabla_{v} f    \|_{{L}^{2}_{\g/2}}^2 + 	\frac{\K}{2}
	\\
	& \leq  \varepsilon  \left( 	\frac{3}{2}    +    \tilde{ \rho }\,   \|  	f    \|_{L^1_{2}} 	   \right)  \| \nabla_v f    \|^2_{L^2_{ {\g}/{2}}} 
	- \mA \,  \| \nabla_{v} f    \|_{{L}^{2}_{\g/2}}^2 + 	\frac{\K}{2}.
	\end{split}
\end{equation*}
Thus, the choice of $\varepsilon$ as follows
\begin{equation}\label{epsilon}
	\varepsilon = \frac{1}{2 } \frac{\mA}{ \left( 	\frac{3}{2}    +    3 \, \tilde{ \rho }\,   \|  	f    \|_{L^1_{2}} 	   \right)},
\end{equation}
where $\tilde{\rho}$ is given in \eqref{Lp weak form}, 
together with the monotonicity  of norms, implies
\begin{equation*}
	\begin{split}
	 \frac{\md}{\md t}\| f \|_{\dot{H}^{1}_{v}}^2 \leq
		-  \mA\, \sum_{i=1}^{3} \| \partial_{v_i} f    \|_{{L}^{2}_{\g/2}}^2 +  \K \leq 	-  \mA \,  \| f \|_{\dot{H}^{1}_{v}}^2 + \K,
	\end{split}
\end{equation*}
where $\K$ is the one from \eqref{K} with the choice \eqref{epsilon}. This completes the proof of \eqref{regularity v} with the constant 
\begin{equation}\label{C reg vel}
\Crgvel =  \frac{\K}{\mA}.
\end{equation}
\end{proof}

\begin{proof}[Proof of Theorem \ref{Th regularity}, Part 2.] Starting from the Boltzmann equation \eqref{BE} and 
taking derivative 
with respect to $I$ implies
\begin{equation}\label{BE der I}
	\partial_t \partial_{I} f =   \partial_{I} Q^+(f,f)(v,I) - 	\nu[f] \partial_{I} f  - f \partial_{I} 	\nu[f].
\end{equation}
Multiplying the last equation \eqref{BE der I}  by $I^{2\delta} \partial_{I} f $ and integrating with respect to $(v,I)$ yields
\begin{multline*}
\frac{1}{2}	\,\frac{\md}{\md t}  \| I^{\delta} \partial_{I} f \|_{{L}^{2}}^2 = 	\int_{ \bRfp } I^{2\delta} (\partial_{I} f) \  \partial_{I} Q^+(f,f)  \, \md v \, \md I
\\ - 	\int_{ \bRfp }	\nu[f] \, |\partial_{I} f|^2  I^{2\delta} \, \md v \, \md I
-  	\int_{ \bRfp } I^{2\delta} f \, \partial_{I}  f \partial_{I} 	\nu[f]  \, \md v \, \md I
=: T_1 - T_2 - T_3.
\end{multline*}
Term $T_1$ becomes, after the Cauchy-Schwartz inequality and Part 2 of the Theorem \ref{inc-reg}, 
	\begin{equation*}
	\int_{\bRfp} \test \, \left(\partial_{I}		Q^+(f,f)(v,I) \right) \, I^{\delta}   \partial_{I} f \, \md v \, \md I 
	\leq  \Csen \,   \| b \|_{\Ls(\bSp)}\, \| f\|^2_{L^2_{(\g+2\delta+1/2)^+}}      \| I^{\delta}   \partial_{I} f \|_{L^{2}}.
\end{equation*}
The estimate from below of  the term $T_2$ follows from the lower bound lemma \ref{Prop: coll freq},
\begin{equation*}
		\int_{ \bRfp }	\nu[f] \, |\partial_{I} f|^2  I^{2\delta} \, \md v \, \md I \geq \mA	\int_{ \bRfp }	  |\partial_{I} f|^2  I^{2\delta} \,  \la v, I \ra^{\g}  \md v \, \md I = \mA \| I^{\delta} \partial_{I} f \|_{{L}^{2}_{\g/2}}^2.
\end{equation*}
For the term $T_3$, the estimate \eqref{est derivative energy} yields
\begin{equation*}
	T_3  \leq \Clen \,	\| f \|_{L^{2}_{(\g+1/2)^+}} 	\| f \|_{L^{2}_{(\g+2\delta+1/2)^+}} \|  I^{\delta} \partial_{I} f \|_{L^2}.
\end{equation*}
Gathering all estimates and exploiting the monotonicity of norms,  we obtain
\begin{equation*}
	\frac{1}{2}	\,\frac{\md}{\md t}  \| I^{\delta} \partial_{I} f \|_{{L}^{2}}^2 \leq  - \mA \| I^{\delta} \partial_{I} f \|_{{L}^{2}_{\g/2}}^2 
	+	\left( \Csen \| b \|_{\Ls(\bSp)}    + \Clen	
	\right) 	\| f \|_{L^{2}_{(\g+2\delta+1/2)^+}}^2   \|  I^{\delta} \partial_{I} f \|_{L^2}.
\end{equation*}
Young's inequality then implies 
\begin{equation*}
	\frac{1}{2}	\,\frac{\md}{\md t}  \| I^{\delta} \partial_{I} f \|_{{L}^{2}}^2 \leq  - \mA \| I^{\delta} \partial_{I} f \|_{{L}^{2}_{\g/2}}^2 
	\\ +  {\varepsilon} \|  I^{\delta} \partial_{I} f \|^2_{L^2} + \frac{K_2}{2},
\end{equation*}
with the constant
\begin{equation*}
	K_2 = \sup_{t\geq0} 	\| f \|_{L^{2}_{(\g+2\delta+1/2)^+}}^4   \frac{1}{\varepsilon} 	\left( \left( \Csen \right)^2  \| b \|^2_{\Ls(\bSp)}    + \left( \Clen \right)^2
	\right).
\end{equation*}
Monotonicity of norms and the choice $\varepsilon=\mA/2$ yield
\begin{equation*}
 \frac{\md}{\md t}  \| I^{\delta} \partial_{I} f \|_{{L}^{2}}^2 \leq  - \mA \| I^{\delta} \partial_{I} f \|_{{L}^{2}_{\g/2}}^2  + {K_2} \leq  - \mA \| I^{\delta} \partial_{I} f \|_{{L}^{2}}^2  + {K_2}.
\end{equation*}
This completes the proof of \eqref{regularity I} with the constant
\begin{equation}\label{C reg en}
\Crgen =	 \frac{K_2}{\mA}.
\end{equation}
\end{proof}

Arbitrary higher regularity propagation in the velocity variable can be achieved using the Leibniz formula for the gain collision operator, similar to the arguments in \cite{MV,AGT}.  
In contrast, arbitrary higher regularity in the internal energy variable $I$ is not possible, because the gain kernel deteriorates in the regions $r = 0$ and $r = 1$ after successive $I$-differentiations, due to the lack of a Leibniz formula for that variable.

\section{Proof of Theorem \ref{Th decomp} (Decomposition theorem)}\label{Sec proof decomp}
\begin{proof}[Proof of Therem \ref{Th decomp}]
We consider the Cauchy problem given by \eqref{BE} and \eqref{BE in}, where the Boltzmann equation is decomposed into its gain and loss terms,
\begin{equation}\label{BE initial}
	\begin{split}
	& \partial_t f(t,v,I) = 	Q^+(f(t,\cdot),f(t,\cdot))(v,I) - f(t,v,I) \	\nu[f(t,\cdot)](v,I), \quad t > 0,\\
	& f(0,v,I) = f_{0}(v,I).
	\end{split}
\end{equation}
For some $t_0 \geq 0$, recall the nonlinear Duhamel representation formula~\cite{MV}, valid for any $t \geq t_0$, where we use the notation $f(t_0, v, I) = f_{t_0}(v, I)$,
\begin{equation*}
	f(t,v,I)  = f_{t_0}(v,I) e^{-\int_{t_0}^t 	\nu[f(s,\cdot)](v,I) \md s}  + \int_{t_0}^t Q^+(f(s,\cdot),f(s,\cdot))(v,I) \, e^{-\int_s^t 	\nu[f(\tau,\cdot)](v,I) \, \md \tau} \, \md s.
\end{equation*}
Thus, for any time $t \geq t_0$, the solution $f$ can be decomposed into two non-negative terms, as shown in \eqref{f decomposed}, and given by
  \begin{equation*}
 	\begin{split}
\fR(t,v,I) & = f_{t_0}(v,I) e^{-\int_{t_0}^t 	\nu[f(s,\cdot)](v,I) \md s}, \\\
\fS(t,v,I) &= \int_{t_0}^t Q^+(f(s,\cdot),f(s,\cdot))(v,I) \, e^{-\int_s^t 	\nu[f(\tau,\cdot)](v,I) \, \md \tau} \, \md s,
\end{split}
 \end{equation*}
which we refer to as the\emph{rough}  and  \emph{smooth}  parts, respectively. As we will show later, the rough part $\fR(t,v,I)$ retains the regularity properties of the  data $f_{t_0}(v,I)$ at time $t_0 \geq 0$,  without gaining smoothness, but its contribution decays exponentially in time in any norm. In contrast, the smooth part $\fS(t,v,I)$ exhibits improved regularity, with smooth first derivatives under appropriate assumptions on the initial data, as described in parts~(a) and~(b) of Theorem \ref{Th decomp}.

First, notice that the lower bound on the collision frequency \eqref{coll freq lower est} implies 
\begin{equation}\label{coll freq lower est 2} 
	\nu[f](v,I) \,   \geq \mA \, \la v, I \ra^{\g} > \mA > 0, \ \text{for any } (v,I) \in \bRfp, \text{with} \ \g>0.
\end{equation} 
Thus, the following pointwise bound on the rough part $\fR$ holds,
\begin{equation*}
\fR(t,v,I)  \leq e^{-\mA (t-t_0)} f_{t_0}(v,I),
\end{equation*}
which proves \eqref{rough est}. 

To prove the smoothness properties of $\fS$, we first consider  the velocity variable. 
Taking the derivative of $\fS$  with respect to $v_i$ yields two terms
\begin{equation}\label{fS v}
\partial_{v_i}	\fS(t,v,I) = \mathcal{I}_1 + \mathcal{I}_2, \quad t\geq t_0,
\end{equation}
where 
\begin{equation*}
	\begin{split}
		 \mathcal{I}_1  &  =  \int_{t_0}^t  Q^+(f(s,\cdot),f(s,\cdot))(v,I) \, \left( \partial_{v_i} e^{-\int_s^t 	\nu[f(\tau,\cdot)](v,I) \, \md \tau} \right) \, \md s, \\ 
\mathcal{I}_2 & =  \int_{t_0}^t \left(  \partial_{v_i} Q^+(f(s,\cdot),f(s,\cdot))(v,I) \right) \, e^{-\int_s^t 	\nu[f(\tau,\cdot)](v,I) \, \md \tau} \, \md s.
	\end{split}
\end{equation*}
For the term $\mathcal{I}_1  $,  the estimate on the velocity derivative of the collision frequency \eqref{est derivative} is exploited,  
\begin{equation*}
	\begin{split}
		\mathcal{I}_1  
		& \leq  \int_{t_0}^t  Q^+(f(s,\cdot),f(s,\cdot))(v,I) \,  e^{-\int_s^t 	\nu[f(\tau,\cdot)](v,I) \, \md \tau}    \int_s^t  \left| \partial_{v_i} 	\nu[f(\tau,\cdot)](v,I)  \right| \, \md \tau  \, \md s \\
%		& \leq   \frac{\g \, \kappa}{2}  \, \cb_{(\g -1)/2}  \int_{t_0}^t  Q^+(f(s,\cdot),f(s,\cdot))(v,I) \,  e^{- \mA (t-s)  \la v, I \ra^{\g}  }    \int_s^t  	\| f(\tau,\cdot) \|_{L^{2}_{(\g+3/2)^+}}  \la v, I \ra^{(\g+3/2)^+}   \, \md \tau  \, \md s \\
		& \leq  \Clvel \, \cb_{(\g -1)/2} \sup_{\tau> t_0}	\| f(\tau,\cdot) \|_{L^{2}_{(\g+3/2)^+}}   \\ & \qquad \qquad \qquad  \qquad \qquad \times \int_{t_0}^t  Q^+(f(s,\cdot),f(s,\cdot))(v,I) \,  e^{- \mA (t-s)  \la v, I \ra^{\g}  }    (t-s) \la v, I \ra^{(\g+3/2)^+}    \md s.
	\end{split}
\end{equation*}
Since $ x \, e^{- \mA x } \leq e^{- \mA x/2 }$ for all $x\geq 0$, the last term can be simplified to
\begin{equation}\label{pomocna 2}
	\begin{split}
		\mathcal{I}_1  
	\leq \Clvel \sup_{\tau>t_0}	\| f(\tau,\cdot) \|_{L^{2}_{(\g+3/2)^+}}    \int_{t_0}^t  Q^+(f(s,\cdot),f(s,\cdot))(v,I) \la v, I \ra^{(3/2)^+}   \,  e^{- \frac{\mA}{2} (t-s)     }     \md s.
	\end{split}
\end{equation}
To attach the weight next to the collision operator to the input functions, we observe that the energy conservation identity \eqref{coll CL} implies
\begin{equation*}
	\langle v, I \rangle \leq \langle v', I' \rangle \, \langle v'_*, I'_* \rangle.
\end{equation*}
This, in turn, allows us to write, defining $\tilde{f}(s,v,I) := f(s,v,I) \langle v, I \rangle^{(3/2)^+}$, 
\begin{equation*}
	Q^+(f(s,\cdot), f(s,\cdot))(v,I) \, \langle v, I \rangle^{(3/2)^+} \leq Q^+(\tilde{f}(s,\cdot), \tilde{f}(s,\cdot))(v,I).
\end{equation*}
Moreover,  applying the $L^2$-estimate \eqref{Lp Q+}, 
\begin{equation*}
	\begin{split}
		\| Q^+(\tilde{f}(s,\cdot),\tilde{f}(s,\cdot)) \|_{L^2} & \leq 	    \rho_3	\| b \|_{\Ls}   \| \tilde{f}(s,\cdot)    \|_{L^2_{ \g}}  \|  	\tilde{f}(s,\cdot)    \|_{L^1_{\g}} \\ & =   \rho_3	\| b \|_{\Ls}   \|  {f}(s,\cdot)    \|_{L^2_{(\g+3/2)^+}}  \|  	{f}(s,\cdot)    \|_{L^1_{(\g+3/2)^+}}.
	\end{split}
\end{equation*}
Substituting this into \eqref{pomocna 2} leads to the final estimate
\begin{equation}\label{I1 L2}
	\| \mathcal{I}_1 \|_{L^2} \leq  \frac{2 \, \Clvel}{\mA}   \,    \rho_3 \,  	\| b \|_{\Ls}\, \sup_{s>t_0}	\| f(s,\cdot) \|_{L^{1}_{(\g+3/2)^+}} \, \sup_{s>t_0}	\| f(s,\cdot) \|^2_{L^{2}_{(\g+3/2)^+}}.
\end{equation}
For the term $\mathcal{I}_2 $, 
the estimate \eqref{coll freq lower est 2} yields
\begin{equation*}
\|	\mathcal{I}_2  \|_{L^2} \leq  \int_{t_0}^t  e^{- \mA (t-s)} \|  \partial_{v_i} Q^+(f(s,\cdot),f(s,\cdot)) \|_{L^2}  \, \md s.
\end{equation*}
 In addition, Theorem~\ref{inc-reg}, and specifically the estimate \eqref{eq Th smooth vel}, provides the bound
\begin{equation}\label{I2 L2}
	\begin{split}
	\|	\mathcal{I}_2  \|_{L^2} & \leq  \Csvel  \, \|b\|_{L^{2}} \int_{t_0}^t  e^{- \mA (t-s)}  \| f(s,\cdot) \|^2_{L^{2}_{(\gamma+2)^+}} \, \md s
	\leq \frac{ \Csvel}{\mA} \, \|b\|_{L^{2}} \, \sup_{s > t_0 } \| f(s,\cdot) \|^2_{L^{2}_{(\gamma+2)^+}}.
	\end{split}
\end{equation}
In conclusion,  combining the estimates \eqref{I1 L2} and \eqref{I2 L2}, and using the monotonicity of norms, \eqref{fS v} implies the regularity estimate of $\fS$ with respect to the velocity variable,  for any $t \geq t_0$,
\begin{multline}\label{v reg of fS}
	\| 	\fS(t,\cdot) \|_{\dot{H}^{1}_{ v}} \leq \frac{\sqrt{3}}{\mA} 
 \left(  2 \, \Clvel  \,    \rho_3 \,  	\| b \|_{\Ls}\, \sup_{s>t_0}	\| f(s,\cdot) \|_{L^{1}_{\g+2}} \, +   \Csvel \|b\|_{L^{2}} \right) \sup_{s >t_0 } \| f(s,\cdot) \|^2_{L^{2}_{(\gamma+2)^+}}.
\end{multline}
It remains to prove the finiteness of the term on the right-hand side; this will be done after first deriving an estimate on the regularity with respect to the $I$ variable.

Indeed, taking the derivative of $\fS$  with respect to $I$ and multiplying with $I^\delta$,  where $\delta$ is from Part 2 of Theorem \ref{inc-reg}, 
 \begin{equation}\label{fS I}
	I^\delta \partial_{I}	\fS(t,v,I)  = \mathcal{J}_1 + \mathcal{J}_2, \quad t\geq t_0,
\end{equation}
where 
\begin{equation*}
	\begin{split}
		\mathcal{J}_1  &  =  I^\delta \int_{t_0}^t  Q^+(f(s,\cdot),f(s,\cdot))(v,I) \, \left( \partial_{I} e^{-\int_s^t 	\nu[f(\tau,\cdot)](v,I) \, \md \tau} \right) \, \md s, \\ 
		\mathcal{J}_2 & = I^\delta  \int_{t_0}^t \left(  \partial_{I} Q^+(f(s,\cdot),f(s,\cdot))(v,I) \right) \, e^{-\int_s^t 	\nu[f(\tau,\cdot)](v,I) \, \md \tau} \, \md s.
	\end{split}
\end{equation*}
Similar reasoning as for $\mathcal{I}_1$, implies, by means of \eqref{est derivative energy}, 
\begin{equation*}
	\begin{split}
		\mathcal{J}_1  
%		& \leq I^\delta \int_{t_0}^t  Q^+(f(s,\cdot),f(s,\cdot))(v,I) \,  e^{-\int_s^t 	\nu[f(\tau,\cdot)](v,I) \, \md \tau}    \int_s^t  \left| \partial_{I} 	\nu[f(\tau,\cdot)](v,I)  \right| \, \md \tau  \, \md s \\
%		& \leq   \frac{\g \, \kappa}{2}  \, \cb_{\g/2 -1}  \int_{t_0}^t  Q^+(f(s,\cdot),f(s,\cdot))(v,I) \,  e^{- \mA (t-s)  \la v, I \ra^{\g}  }    \int_s^t  	\| f(\tau,\cdot) \|_{L^{2}_{(\g+1/2)^+}}  \la v, I \ra^{(\g+1/2)^+} \, I^\delta   \, \md \tau  \, \md s \\
%		& \leq   \frac{\g \, \kappa}{2}  \, \cb_{\g/2 -1} \sup_{\tau >t_0 }	\| f(\tau,\cdot) \|_{L^{2}_{(\g+1/2)^+}}    \int_{t_0}^t  Q^+(f(s,\cdot),f(s,\cdot))(v,I) \,  e^{- \mA (t-s)  \la v, I \ra^{\g}  }    (t-s) \la v, I \ra^{(\g+2\delta+1/2)^+}    \, \md s \\
		& \leq  \Clen \sup_{\tau>t_0}	\| f(\tau,\cdot) \|_{L^{2}_{(\g+1/2)^+}}    \int_{t_0}^t  Q^+(f(s,\cdot),f(s,\cdot))(v,I) \la v, I \ra^{(2\delta + 1/2)^+}   \,  e^{- \frac{\mA}{2} (t-s)     }     \, \md s. 
	\end{split}
\end{equation*}
Then, since
 \begin{equation*}
 	\begin{split}
 		\|  Q^+(f(s,\cdot),f(s,\cdot)) \la \cdot \ra^{(2\delta + 1/2)^+}   \|_{L^2} & \leq 	    \rho_3	\| b \|_{\Ls}   \|  {f}(s,\cdot)    \|_{L^2_{(\g+2\delta+1/2)^+}}  \|  	{f}(s,\cdot)    \|_{L^1_{(\g+2\delta+1/2)^+}},
 	\end{split}
 \end{equation*}
the monotonicity of norms implies the final estimate
\begin{equation}\label{J1 L2}
	\|	\mathcal{J}_1  \|_{L^2} 
		\leq   \frac{2\, \Clen}{\mA}  \, \rho_3 \,  	\| b \|_{\Ls}    \sup_{s>t_0}  \|  	{f}(s,\cdot)    \|_{L^1_{(\g+2\delta+1/2)^+}}  \sup_{s>t_0}  \|  {f}(s,\cdot)    \|^2_{L^2_{(\g+2\delta+1/2)^+}}.
\end{equation}
For the term $\mathcal{J}_2$, the lower bound \eqref{coll freq lower est 2} together with the estimate \eqref{eq Th smooth int en} from Theorem~\ref{inc-reg} imply
\begin{equation}\label{J2 L2}
	\begin{split}
	\|	\mathcal{J}_2  \|_{L^2}  & \leq  \int_{t_0}^t  e^{- \mA (t-s)} \| I^\delta \partial_{I} Q^+(f(s,\cdot),f(s,\cdot)) \|_{L^2}  \, \md s 	 \leq \frac{ \Csen}{\mA} \| b \|_{\Ls} \, \sup_{s>t_0} \| f(s,\cdot)\|^2_{L^2_{(\g+2\delta+1/2)^+}}.
	 \end{split}
\end{equation}
Thus, combining \eqref{J1 L2} and \eqref{J2 L2} into \eqref{fS I}, with monotonicity of norms, yields the regularity estimate for $\fS$ with respect to the $I$ variable, valid for any $t \geq t_0$,
 \begin{equation}\label{I reg of fS}
	\| 	I^\delta \partial_{I}	\fS \|_{L^2} \leq \frac{	\| b \|_{\Ls} }{\mA}  \left(    2\, \Clen  \, \rho_3 \,      \sup_{s>t_0}  \|  	{f}(s,\cdot)    \|_{L^1_{\g+2}} +   \Csen \right) \sup_{s> t_0} \| f(s,\cdot)\|^2_{L^2_{(\g+2)^+}},
\end{equation}
where the finiteness of the right-hand side will be proven below. 

To conclude the proof of \eqref{v reg of fS} and \eqref{I reg of fS}, we need to show the finiteness of the right-hand sides, depending on the integrability properties of the initial data. 

Part (a) assumes $f_0(v,I) \in L^1_2 \cap L^2$. Then, for any $t\geq t_0 >0$,  generation of $L^1$-moments \eqref{poly gen} implies
\begin{equation}\label{poly gen 2}
	\| f(s,\cdot) \|_{L^1_k} \leq C^{\text{gen}} \left(1  +  s^{-\frac{k-2}{\g}} \right) \quad \Rightarrow \quad  \sup_{s>t_0}  \|  	{f}(s,\cdot)    \|_{L^1_{\g+2}} \leq  C^{\text{gen}}  \left(1  +  t_0^{-1} \right) < \infty \ \text{for} \ t_0>0.
\end{equation}
Moreover, generation of $L^p$ tails \eqref{Lp generation t0} for $p=2$ implies
\begin{multline*}
	 \| f(s,\cdot)\|_{L^2_{k}} \leq C^{\text{gen}}_{2; t_0}  (1+ (s-\tilde{t})^{-\frac{k}{\g}}), \quad s>\tilde{t} := \frac{t_0}{2} \\ \Rightarrow \quad \sup_{s> t_0} \| f(s,\cdot)\|^2_{L^2_{(\g+2)^+}} \leq C^{\text{gen}}_{2; t_0}  \Big(1+ \Big(\frac{t_0}{2}\Big)^{-\frac{(\g+2)^+}{\g}}\Big) < \infty \ \text{for} \ t_0>0,
\end{multline*}
which completes the proof of \eqref{fS gen}, with the constants
\begin{equation*}
	\begin{split}
\CveltS & = \frac{\sqrt{3}}{\mA} \left( 2  \, \Clvel  \,  \rho_3 \,  	\| b \|_{\Ls}\,C^{\text{gen}}  \left(1  +  t_0^{-1} \right) +   \Csvel \, \|b\|_{L^{2}} \right)\,C^{\text{gen}}_{2; t_0}  \Big(1+ \Big(\frac{t_0}{2}\Big)^{-\frac{(\g+2)^+}{\g}}\Big), \\
	\CenttS & = \frac{	\| b \|_{\Ls} }{\mA}  \left(    2 \, \Clen \, \rho_3 \,      C^{\text{gen}}  \left(1  +  t_0^{-1} \right)  +   \Csen \right) C^{\text{gen}}_{2; t_0}  \Big(1+ \Big(\frac{t_0}{2}\Big)^{-\frac{(\g+2)^+}{\g}}\Big).
	\end{split}
\end{equation*}

Part~(b) assumes $f_0(v,I) \in L^1_{(2\gamma+3)^+} \cap L^2_{(\gamma+2)^+}$, which allows   to conclude, by the propagation of moments \eqref{poly prop} and the $L^2$ propagation estimate \eqref{Lp propagation}, that for any time $t \geq 0$, the right-hand sides of \eqref{v reg of fS} and \eqref{I reg of fS} are finite, 
%\begin{equation*}
%	\sup_{s > 0} \| f(s,\cdot) \|_{L^1_{\gamma+2}} < \infty, \quad \text{and} \quad \sup_{s > 0} \| f(s,\cdot) \|^2_{L^2_{(\gamma+2)^+}} < \infty,
%\end{equation*}
thereby completing the proof of \eqref{fS prop}, with the constants
\begin{equation*}
	\begin{split}
	\CvelzS & = \frac{\sqrt{3}}{\mA} \left( 2  \, \Clvel  \,  \rho_3 \,  	\| b \|_{\Ls}\, \max \left\{  e \,	\| f_0 \|_{L^1_{\g+2}}, \Cpr \right\} +   \Csvel \, \|b\|_{L^{2}} \right)\, \max\left\{  \| f_0    \|_{L^2_{(\gamma+2)^+}}, \Cpr_2 \right\}, \\
	\CenzS & = \frac{	\| b \|_{\Ls} }{\mA}  \left(    2 \, \Clen \, \rho_3 \,      \max \left\{  e \,	\| f_0 \|_{L^1_{\g+2}}, \Cpr \right\}  +   \Csen \right)  \max\left\{  \| f_0    \|_{L^2_{(\gamma+2)^+}}, \Cpr_2 \right\}.
	\end{split}
\end{equation*}

\end{proof}

\appendix

\section{Auxiliary results}

The  Cauchy-Schwartz inequality implies the following estimate, for any suitable $g$,
\begin{equation}\label{constant Ca estimate}
	\int_{\bRfp} g(v_*,I_*)   \E^{a} \md v_* \, \md I_* \leq \cb_a \,  \| g\|_{L^2_{\left(2 a + {5}/{2}\right)^+}} \la v, I \ra^{\left(2 a + {5}/{2}\right)^+}, \quad \text{for} \ a>-\frac{5}{4},
\end{equation}
where the following  constant is introduced,
\begin{equation}\label{constant Ca}
	\mathcal{C}_a =	 \sup_{(v,I)}  \left\| \    \E^{a} \la v, I \ra^{-s}  \right\|_{L^2_{-s}(\md v_* \md I_*)}, \quad \text{for} \ a>-\frac{5}{4}, \ \text{and} \ s> 2 a + \frac{5}{2}.
\end{equation}

\begin{lemma}\label{Lemma F est} 
	For any $x \in \mathbb{R}$ and $\alpha\geq 0$,
	\begin{equation}\label{F est lemma}
		\left| \int_{r\in[0,1]} e^{-i  \, x \, r} \,  r^\alpha(1-r)^\alpha \md r  \right| \leq  2 \min \left\{ 1, \frac{1}{|x|} \right\}.
	\end{equation}
\end{lemma}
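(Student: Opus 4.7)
The plan is to prove the stated bound by combining two simple estimates: a trivial pointwise bound giving $|\cdot|\leq 2$, and an oscillatory/integration-by-parts bound giving $|\cdot|\leq 2/|x|$ for $x\neq 0$. Taking the minimum yields the desired $2\min\{1,1/|x|\}$.

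\textbf{Step 1 (Trivial bound).} Since $|e^{-ixr}|=1$ and $r^\alpha(1-r)^\alpha\leq 1$ for $r\in[0,1]$ when $\alpha\geq 0$, I would first write
\[
\left|\int_0^1 e^{-ixr}\,r^\alpha(1-r)^\alpha\,\md r\right|\leq \int_0^1 r^\alpha(1-r)^\alpha\,\md r \leq 1 \leq 2.
\]

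\textbf{Step 2 (Decay in $x$).} For $x\neq 0$, I would integrate by parts using $e^{-ixr}\,\md r = \frac{i}{x}\,\md(e^{-ixr})$. When $\alpha>0$, the boundary terms vanish because $r^\alpha(1-r)^\alpha$ is zero at both $r=0$ and $r=1$, yielding
\[
\int_0^1 e^{-ixr}\,r^\alpha(1-r)^\alpha\,\md r = -\frac{i}{x}\int_0^1 e^{-ixr}\,\frac{\md}{\md r}\bigl(r^\alpha(1-r)^\alpha\bigr)\,\md r.
\]
The function $r\mapsto r^\alpha(1-r)^\alpha$ is increasing on $[0,1/2]$ and decreasing on $[1/2,1]$, with maximum value $4^{-\alpha}\leq 1$. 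Hence its total variation on $[0,1]$ is $2\cdot 4^{-\alpha}\leq 2$, so
\[
\left|\int_0^1 e^{-ixr}\,r^\alpha(1-r)^\alpha\,\md r\right|\leq \frac{1}{|x|}\int_0^1\left|\tfrac{\md}{\md r}\bigl(r^\alpha(1-r)^\alpha\bigr)\right|\md r \leq \frac{2}{|x|}.
\]
The borderline case $\alpha=0$ is handled directly: the integral equals $(1-e^{-ix})/(ix)$, whose modulus is at most $2/|x|$.

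\textbf{Step 3 (Combine).} Taking the minimum of the bounds from Steps 1 and 2 gives
\[
\left|\int_0^1 e^{-ixr}\,r^\alpha(1-r)^\alpha\,\md r\right|\leq \min\left\{2,\,\frac{2}{|x|}\right\} = 2\min\left\{1,\,\frac{1}{|x|}\right\},
\]
which is the claimed inequality.

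There is no real obstacle here — the argument is a textbook oscillatory-integral estimate. The only minor subtlety is the $\alpha=0$ boundary case, where integration by parts produces nontrivial boundary contributions; this is resolved by the direct evaluation above.
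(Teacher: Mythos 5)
Your proof is correct and follows essentially the same route as the paper: a trivial bound combined with an integration-by-parts bound (with the $\alpha=0$ case evaluated directly), the only cosmetic difference being that you control the derivative integral by the total variation $2\cdot 4^{-\alpha}$ while the paper bounds $\int_0^1 \alpha\bigl(r^{\alpha-1}(1-r)^\alpha + r^\alpha(1-r)^{\alpha-1}\bigr)\,\md r \leq 2$ directly.
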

\begin{proof}
	First, for $\alpha=0$, one can explicitly compute the integral and estimate
	\begin{equation}\label{r alpha=0}
		\left| \int_{r\in[0,1]} e^{-i \, x  \, r}    \md r  \right| \leq  \frac{2}{|x|}.
	\end{equation}
	When $\alpha>0$, on one hand, note
	\begin{equation}\label{r alpha>0 1}
		\left| \int_{r\in[0,1]} e^{-i \, x \, r }  r^\alpha(1-r)^\alpha \md r \right| \leq  \int_{r\in[0,1]}  r^\alpha(1-r)^\alpha \md r,
	\end{equation}
	and, on the other hand,  integration by parts implies
	\begin{equation}\label{r alpha>0 2}
		\left| \int_{r\in[0,1]} e^{-i \,  x\,  r }  r^\alpha(1-r)^\alpha \md r  \right| \leq  \frac{1}{|x|  }  \int_{r\in[0,1]} \alpha \left( r^{\alpha-1}(1-r)^\alpha + r^\alpha (1-r)^{\alpha-1} \right) \md r.
	\end{equation}
	Since for $\alpha>0$
	\begin{equation*}
		\int_{r\in[0,1]}  r^\alpha(1-r)^\alpha \md r \leq \int_{r\in[0,1]} \alpha \left( r^{\alpha-1}(1-r)^\alpha + r^\alpha (1-r)^{\alpha-1} \right) \md r \leq 2,
	\end{equation*}
	we   gather the estimates  \eqref{r alpha=0}, \eqref{r alpha>0 1}, \eqref{r alpha>0 2}, and conclude \eqref{F est lemma} for any $\alpha\geq 0$.
\end{proof}

\section{Toolbox from $L^p$ theory}

\subsection{Estimates on the gain term}
For any suitable function $\psi(r,R)$, we define the following averaging operator
\begin{equation}\label{S operator gen}
	\mS^\psi(\chi)(v,I, v_*, I_*)  =  \int_{  \domparam }  b(\hat{u}\cdot\sigma) \,  \psi(r,R)  \, \chi(v',I')  \, \dpo \, \md \sigma\, \md R \,\md r.
\end{equation}
The estimate on its $L^2$-norm can be obtained slightly  modifying the one from \cite{MC-Alonso-Lp}. Namely, defining a constant, assumed finite for a proper choice of $\psi$,
\begin{equation}\label{rho gen}
	 \rho^\psi  = 2^{7/4}   \int_{[0,1]^2}  \frac{1}{\sqrt{r(1-R)}}\, \psi(r,R) \, \dub \, \md r \, \md R,
\end{equation}
the following estimate holds, for $b \in L^1$,
\begin{equation}\label{S estimate L1 gen}
	\sup_{(v_*, I_*)}	\left\| 	\mS^\psi(\chi) \right\|_{L^{2}(\md v\, \md I)}  \leq   \rho^\psi \,   \| b \|_{\Ls(\bSp)}   \| \chi\|_{L^{2}}.
\end{equation}
Now, choose  the collision kernel \eqref{coll kernel assumpt}. We state two results from \cite{MC-Alonso-Lp}. 

First, Proposition 6.1 for $p=q=2$, implies, for $\alpha \geq 0$,
 	\begin{equation}\label{Lp weak form}
	\int_{\bRfp} Q^+(f, g)(v, I) \, \chi(v, I) \, \md v\, \md I  
	\\
	\leq \tilde{\rho} \,	\| b \|_{\Ls}   \| f    \|_{L^p_{ {\g}/{2}}}  \|  	g    \|_{L^1_{\g}} \| \chi  \|_{L^{2}_{{\g}/{2}}}, \quad \tilde{\rho} := 2^{\frac{3\g }{4}} { \rho_1 },
\end{equation}
where $\rho_1$ is $ \rho^\psi $ for  $\psi = \left( \frac{1}{(1-r)(1-R)} + \frac{1}{\sqrt{R}}\right)$.

Second, since 
	\begin{equation}\label{Lp weak form L2}
	\int_{\bRfp} Q^+(f, g)(v, I) \, \chi(v, I) \, \md v\, \md I  
	\\
	\leq    \rho_3	\| b \|_{\Ls}   \| f    \|_{L^2_{ \g}}  \|  	g    \|_{L^1_{\g}} \| \chi  \|_{L^{2}},
\end{equation}
where $\rho_3$ is $\rho^\psi$ for $\psi=1$, finite for  $\alpha>-1/2$, by duality it holds
	\begin{equation}\label{Lp Q+}
\| Q^+(f, g) \|_{L^2}
	\leq   \rho_3	\| b \|_{\Ls}   \| f    \|_{L^2_{ \g}}  \|  	g    \|_{L^1_{\g}}.
\end{equation}

.

\section*{Acknowledgments}
R. A. thanks TAMUQ internal funding research grant 470242-25650.  M. \v{C}. acknowledges
the support from the European Union’s Horizon Europe research and innovation programme, under
the Marie Sk{\l}odowska-Curie grant agreement No. 101194202, project  \emph{Boltzmann models for polyatomic gases and mixtures: analysis, macroscopic limits and entropy methods} (BAME).
	
\bibliography{polyatomic}

\providecommand{\noopsort}[1]{}\providecommand{\singleletter}[1]{#1}%
\begin{thebibliography}{10}

\bibitem{non-cut}
R.~Alexandre, Y.~Morimoto, S.~Ukai, C.-J. Xu, and T.~Yang.
\newblock Smoothing effect of weak solutions for the spatially homogeneous
  {B}oltzmann equation without angular cutoff.
\newblock {\em Kyoto J. Math.}, 52(3):433--463, 2012.

\bibitem{Alonso-cooling}
R.~Alonso, V.~Bagland, Y.~Cheng, and B.~Lods.
\newblock One-dimensional dissipative {B}oltzmann equation: measure solutions,
  cooling rate, and self-similar profile.
\newblock {\em SIAM J. Math. Anal.}, 50(1):1278--1321, 2018.

\bibitem{AGT}
R.~Alonso, I.~M. Gamba, and M.~Tasković.
\newblock Exponentially-tailed regularity and time asymptotic for the
  homogeneous {B}oltzmann equation.
\newblock {\em ArXiv:1711.06596}, 2024.

\bibitem{MC-Alonso-Pesaro}
R.~Alonso and M.~\v{C}oli\'{c}.
\newblock Boltzmann framework for polyatomic gases: review on well-posedness,
  higher integrability and physical relevance.
\newblock {\em ArXiv:2502.20308}, 2025.

\bibitem{MC-Alonso-frozen}
R.~Alonso and M.~\v{C}oli\'{c}.
\newblock Moment estimates for polyatomic {B}oltzmann equation with frozen
  collisions.
\newblock {\em ArXiv:2502.08237}, 2025.

\bibitem{AGT-S}
R.~J. Alonso, I.~M. Gamba, and S.~H. Tharkabhushanam.
\newblock Convergence and error estimates for the {L}agrangian-based
  conservative spectral method for {B}oltzmann equations.
\newblock {\em SIAM J. Numer. Anal.}, 56(6):3534--3579, 2018.

\bibitem{MC-Alonso-Lp}
R.~J. Alonso and M.~\v{C}oli\'c.
\newblock Integrability propagation for a {B}oltzmann system describing
  polyatomic gas mixtures.
\newblock {\em SIAM J. Math. Anal.}, 56(1):1459--1494, 2024.

\bibitem{MC-Alonso-Gamba}
R.~J. Alonso, M.~\v{C}oli\'c, and I.~M. Gamba.
\newblock {T}he {C}auchy {P}roblem for {B}oltzmann {B}i-linear {S}ystems: {T}he
  {M}ixing of {M}onatomic and {P}olyatomic {G}ases.
\newblock {\em J. Stat. Phys.}, 191(9), 2024.

\bibitem{Bern}
N.~Bernhoff.
\newblock Linearized {B}oltzmann collision operator: {II}. {P}olyatomic
  molecules modeled by a continuous internal energy variable.
\newblock {\em Kinet. Relat. Models}, 16(6):828--849, 2023.

\bibitem{Bern-nous}
N.~Bernhoff, L.~Boudin, M.~\v{C}oli\'{c}, and B.~Grec.
\newblock Compactness of linearized {B}oltzmann operators for polyatomic gases.
\newblock {\em arXiv:2407.11452}, 2025.

\bibitem{Bob-Exact-solutions-Fourier}
A.~V. Bobylev.
\newblock Exact solutions of the nonlinear {B}oltzmann equation and the theory
  of relaxation of a {M}axwellian gas.
\newblock {\em Theor. Math. Phys.}, 60(2):820--841, 1984.

\bibitem{Bor-Larsen}
C.~Borgnakke and P.~S. Larsen.
\newblock Statistical collision model for {M}onte {C}arlo simulation of
  polyatomic gas mixture.
\newblock {\em J. Comput. Phys.}, 18(4):405--420, 1975.

\bibitem{BD}
F.~Bouchut and L.~Desvillettes.
\newblock A proof of the smoothing properties of the positive part of
  {B}oltzmann's kernel.
\newblock {\em Rev. Mat. Iberoamericana}, 14(1):47--61, 1998.

\bibitem{LD-Bourgat}
J.-F. Bourgat, L.~Desvillettes, P.~Le~Tallec, and B.~Perthame.
\newblock Microreversible collisions for polyatomic gases and {B}oltzmann's
  theorem.
\newblock {\em European J. Mech. B Fluids}, 13(2):237--254, 1994.

\bibitem{Brull-Comp-2}
S.~Brull, M.~Shahine, and P.~Thieullen.
\newblock Fredholm property of the linearized {B}oltzmann operator for a
  polyatomic single gas model.
\newblock {\em Kinet. Relat. Models}, 2023.

\bibitem{Brull}
S.~Brull, M.~Shahine, and P.~Thieullen.
\newblock Fredholm property of the linearized {B}oltzmann operator for a
  polyatomic single gas model.
\newblock {\em Kinet. Relat. Models}, 17(2):234--252, 2024.

\bibitem{LD-Toulouse}
L.~Desvillettes.
\newblock Sur un mod\`{e}le de type {B}orgnakke–{L}arsen conduisant \`{a} des
  lois d’energie non-lin\'{e}aires en temp\'{e}rature pour les gaz parfaits
  polyatomiques.
\newblock {\em Ann. Fac. Sci. Toulouse Math.}, 6(0):257--262, 1997.

\bibitem{LD-Parma}
L.~Desvillettes.
\newblock Boltzmann’s kernel and the spatially homogeneous {B}oltzmann
  equation.
\newblock {\em Riv. Mat. Univ. Parma}, 4*(6):1--22, 2001.

\bibitem{DesMonSalv}
L.~Desvillettes, R.~Monaco, and F.~Salvarani.
\newblock A kinetic model allowing to obtain the energy law of polytropic gases
  in the presence of chemical reactions.
\newblock {\em Eur. J. Mech. B Fluids}, 24(2):219--236, 2005.

\bibitem{MPC-Dj-T-O}
V.~Djordji\'{c}, G.~Oblapenko, M.~Pavi\'{c}-\v{C}oli\'{c}, and M.~Torrilhon.
\newblock {B}oltzmann collision operator for polyatomic gases in agreement with
  experimental data and {DSMC} method.
\newblock {\em Contin. Mech. Thermodyn.}, 35:103--119, 2023.

\bibitem{MPC-Dj-S}
V.~Djordji\'{c}, M.~Pavi\'{c}-\v{C}oli\'{c}, and N.~Spasojevi\'{c}.
\newblock Polytropic gas modelling at kinetic and macroscopic levels.
\newblock {\em Kinet. Relat. Models}, 14(3):483--522, 2021.

\bibitem{MPC-Dj-T}
V.~Djordji\'{c}, M.~Pavi\'{c}-\v{C}oli\'{c}, and M.~Torrilhon.
\newblock Consistent, explicit and accessible {B}oltzmann collision operator
  for polyatomic gases.
\newblock {\em Phys. Rev. E}, 104:025309, 2021.

\bibitem{Duan-Li}
R.~Duan and Z.~Li.
\newblock Global bounded solutions to the {B}oltzmann equation for a polyatomic
  gas.
\newblock {\em Internat. J. Math.}, 34(7):Paper No. 2350036, 43, 2023.

\bibitem{MPC-IG-poly}
I.~M. Gamba and M.~Pavi\'{c}-\v{C}oli\'{c}.
\newblock On the {C}auchy problem for {B}oltzmann equation modeling a
  polyatomic gas.
\newblock {\em J. Math. Phys.}, 64:013303, 2023.

\bibitem{Gio}
V.~Giovangigli.
\newblock {\em Multicomponent flow modeling}.
\newblock Modeling and Simulation in Science, Engineering and Technology.
  Birkh\"{a}user Boston, Inc., Boston, MA, 1999.

\bibitem{Ko-Son}
G.~Ko and S.-j. Son.
\newblock Global stability of the {B}oltzmann equation for a polyatomic gas
  with initial data allowing large oscillations.
\newblock {\em J. Differential Equations}, 425:506--552, 2025.

\bibitem{L}
P.-L. Lions.
\newblock Compactness in {B}oltzmann's equation via {F}ourier integral
  operators and applications. {III}.
\newblock {\em J. Math. Kyoto Univ.}, 34(3):539--584, 1994.

\bibitem{LU}
X.~Lu.
\newblock A direct method for the regularity of the gain term in the
  {B}oltzmann equation.
\newblock {\em J. Math. Anal. Appl.}, 228(2):409--435, 1998.

\bibitem{Martin-ODE}
R.~H. Martin, Jr.
\newblock {\em Nonlinear operators and differential equations in {B}anach
  spaces}.
\newblock Pure and Applied Mathematics. Wiley-Interscience [John Wiley \&
  Sons], New York-London-Sydney, 1976.

\bibitem{MV}
C.~Mouhot and C.~Villani.
\newblock Regularity theory for the spatially homogeneous {B}oltzmann equation
  with cut-off.
\newblock {\em Arch. Ration. Mech. Anal.}, 173(2):169--212, 2004.

\bibitem{Kusto-book}
E.~Nagnibeda and E.~Kustova.
\newblock {\em Non-equilibrium reacting gas flows}.
\newblock Heat and Mass Transfer. Springer-Verlag, Berlin, 2009.

\bibitem{MPC-SS-non-poly}
M.~Pavi\'c-\v Coli\'c and S.~Simi\'c.
\newblock Kinetic description of polyatomic gases with temperature-dependent
  specific heats.
\newblock {\em Phys. Rev. Fluids}, 7:083401, 2022.

\bibitem{Rugg-book-2}
T.~Ruggeri and M.~Sugiyama.
\newblock {\em Classical and relativistic rational extended thermodynamics of
  gases}.
\newblock Springer, 2021.

\bibitem{Str}
H.~Struchtrup.
\newblock {\em Macroscopic Transport Equations for Rarefied Gas Flows}.
\newblock Interaction of Mechanics and Mathematics. Springer, Berlin, 2005.

\bibitem{torrilhon2016modeling}
M.~Torrilhon.
\newblock Modeling nonequilibrium gas flow based on moment equations.
\newblock {\em Annu. Rev. Fluid Mech.}, 48(1):429--458, 2016.

\bibitem{V}
C.~Villani.
\newblock A {R}eview of {M}athematical {T}opics in {C}ollisional {K}inetic
  {T}heory.
\newblock volume~1 of {\em Handbook of Mathematical Fluid Dynamics}, pages
  71--305. North-Holland, 2002.

\bibitem{W}
B.~Wennberg.
\newblock Regularity in the {B}oltzmann equation and the {R}adon transform.
\newblock {\em Commun. Partial. Differ. Equ.}, 19(11-12):2057--2074, 1994.

\end{thebibliography}

\end{document}